\newtheorem{theorem}{Theorem}
\newtheorem{corollary}[theorem]{Corollary}
\newtheorem{lemma}[theorem]{Lemma}
\newtheorem{example}[theorem]{\it Example}
\newtheorem{proposition}[theorem]{Proposition}
\newtheorem{definition}[theorem]{Definition}
\newtheorem{remark}[theorem]{\it Remark}
\newcommand{\R}{\mathbb{{R}}}
\newcommand{\Z}{\mathbb{{Z}}}
\newcommand{\N}{\mathbb{{N}}}
\newcommand{\C}{\mathbb{{C}}}
\begin{document}

\title[Ces\`{a}ro sums and algebra homomorphisms of bounded operators]{Ces\`{a}ro sums and algebra homomorphisms of bounded operators}

\author[Abadias]{Luciano Abadias}
\address{Departamento de Matem\'aticas, Instituto Universitario de Matem\'aticas y Aplicaciones, Universidad de Zaragoza, 50009 Zaragoza, Spain.}
\email{labadias@unizar.es}

\author[Lizama]{Carlos Lizama}
\address{Departamento de Matem\'atica y Ciencia de la Computaci\'on, Universidad de Santiago de Chile, Casilla 307-Correo 2, Santiago-Chile, Chile.}
\email{carlos.lizama@usach.cl}

\author[Miana]{Pedro J. Miana}
\address{Departamento de Matem\'aticas, Instituto Universitario de Matem\'aticas y Aplicaciones, Universidad de Zaragoza, 50009 Zaragoza, Spain.}
\email{pjmiana@unizar.es}

\author[Velasco]{M. Pilar Velasco}
\address{Centro Universitario de la Defensa, Instituto Universitario de Matem\'aticas y Aplicaciones, Instituto de Matem\'tica Interdisciplinar, 50090 Zaragoza, Spain.}
\email{velascom@unizar.es}

\thanks{C. Lizama has been partially supported by DICYT, Universidad de Santiago de Chile; Project
CONICYT-PIA ACT1112 Stochastic Analysis Research Network; FONDECYT 1140258 and Ministerio de Educaci\'{o}n CEI Iberus (Spain). L. Abadias,  P. J. Miana and M.P. Velasco have been partially supported  by Project MTM2013-42105-P, DGI-FEDER, of the MCYTS; Project E-64, D.G. Arag\'on, and  Project UZCUD2014-CIE-09, Universidad de Zaragoza.}

\subjclass[2010]{  47C05, 47A35; 44A55, 65Q20}

\keywords{Fractional sums and differences; algebra homomorphisms; $(C, \alpha)$-bounded operators, Ces\`aro bounded; Abel means}

\begin{abstract}
Let $X$ be a complex Banach space. The connection between algebra homomorphisms defined on subalgebras  of the  Banach algebra $\ell^{1}(\N_0)$ and the algebraic structure  of Ces\`{a}ro sums of a linear operator $T\in \mathcal{B}(X)$ is established. In particular, we show that every $(C, \alpha)$-bounded operator $T$ induces - and is in fact characterized - by such an algebra homomorphism. Our method is based on some sequence kernels,  Weyl fractional difference calculus and convolution Banach algebras that are introduced and deeply examined. To illustrate our results,  improvements to bounds for Abel means, new insights on the $(C,\alpha)$ boundedness of the resolvent operator for temperated $\alpha$-times integrated semigroups,   and  examples of bounded homomorphisms  are given in the last section.
 \end{abstract}

\date{}

\maketitle

\section{Introduction}

Let $X$ be a complex Banach space.   Let $T$ be an operator in  the Banach algebra $\mathcal{B}(X)$ and  denote by $\mathcal{T}$ the discrete semigroup given by $\mathcal{T}(n):=T^{n}$ for $n\in \N_0$.
The Ces\`{a}ro sum of order $\alpha>0$ of $T$, $\{\Delta^{-\alpha} \mathcal{T}(n)\}_{n\in \N_0}\subset \mathcal{B}(X)$, is  defined by
\begin{equation*}
\Delta^{-\alpha} \mathcal{T}(n)x = \displaystyle\sum_{j=0}^n k^{\alpha}(n-j)\mathcal{T}(j) x, \qquad x\in X; \quad n\in \N_0,
\end{equation*}
where
\begin{equation*}\label{eq1}
k^{\alpha}(n) = \frac{\Gamma(\alpha +n)}{\Gamma(\alpha) \Gamma(n+1)}, \qquad n\in  \N_0,
\end{equation*}
is the Ces\`{a}ro kernel. It is well known that Ces\`{a}ro sums is an important concept  that appears in several contexts and ways in the literature. For instance, in Zygmund's book, it appeared in connection with summability of Fourier series \cite[Chapter III, Section 3.11]{Zygmund} and in \cite{Ch-Mu93} in relation with weighted norm inequalities for Jacobi polynomials and series.  
See also \cite{Gr31} and \cite{Ku39}.
The starting point for our investigation is this definition  of fractional sum of the discrete semigroup $\mathcal{T}$. Certain fractional sums have been used in recent years to develop a theory of fractional differences with interesting applications to boundary value problems and concrete models coming from biological issues, see for example \cite{AtSe10} and \cite{Go12}. Note that  this definition coincides or is connected  with other fractional sums of the discrete semigroup $\mathcal{T}$ on the set $\N_0,$ see \cite[Section 1]{AtEl09} or \cite[Theorem 2.5]{Lizama1}.


Consider $\phi:\N_0\to \R^+$ a positive weight sequence and
 the Banach algebra  $\ell^{1}_\phi$ (endowed with their natural convolution product).
Suppose $\frac{1}{\phi(\cdot)}\mathcal{T} \in \ell^{\infty}(\mathcal{B}(X)).$ It is well known and easy to show that the semigroup $\mathcal{T}$ induces an algebra homomorphism $\theta:\ell^{1}_\phi \to \mathcal{B}(X)$ defined by
\begin{equation*}\label{homos}
\theta(f)x :=\displaystyle\sum_{n=0}^{\infty}f(n)\mathcal{T}(n)x,\qquad f\in \ell^{1}_\phi, \quad x\in X.
\end{equation*}
Note that in the case that $T$ is a power bounded operator, i.e., $\mathcal{T} \in \ell^{\infty}(\mathcal{B}(X))$, then $\theta:\ell^1 \to \mathcal{B}(X)$. Moreover, this homomorphism is a natural extension of the $Z$-transform, see for example \cite{Elaydi} and references therein.

In general, algebra homomorphisms are  useful tools to treat different interesting aspects of operator theory: Algebra relations,  sharp norm estimations, subordination operators, or ergodic behaviour (as Katznelson-Tzafriri theorems, see \cite{Katznelson}).

As mentioned before, it is remarkable that  Ces\`{a}ro sums have appeared in the literature since some time ago but until now there was not noted their relationship with the theory of  fractional sums  and their algebraic structure. The first main purpose of this paper is to show how  this connection  provide   new insight on properties and characterizations of  Ces\`{a}ro sums, notably concerning their interplay with algebra homomorphisms.

Ces\`{a}ro sums are also a basic  tool to define $(C, \alpha)$-bounded operators,  a natural extension of power-bounded operators.
We recall that a bounded operator $T \in \mathcal{B}(X)$ is $(C,\alpha)$-bounded $(\alpha >0)$ if
\begin{equation*}
\sup_{n} \|\frac{1}{k^{\alpha+1}(n)}\Delta^{-\alpha} \mathcal{T}(n) \| < \infty.
\end{equation*}
See \cite{De00, Su-Ze13}  for  examples and  properties of  $(C,\alpha)$-bounded operators. Note that if $T$ is power bounded,  then $T$ is a $(C, \alpha)$-bounded operator for every $\alpha>0$.
However, there are operators that does not satisfy the power-boundedness condition,  but  $\sup_{n\ge 1}
{1\over n}\|\Delta^{-1} \mathcal{T}(n) \|<\infty,$ as the well-known Assani example shows
$$ T= \left( \begin{array}{rrr}
-1 & 2  \\
 0 & -1  \\
 \end{array} \right),
 $$
see \cite[Section 4.7]{E2}; recently other examples are appeared in  \cite{De00, Ed04,  Su-Ze13, To-Ze, Yo98}.


The following natural question then arises:  $(Q)$ Can $T$ induce an algebra homomorphism from a  proper subalgebra $\mathcal{A} \subset \ell^{1}$ to $\mathcal{B}(X)$ such that Ces\`{a}ro sums are kernels of this homomorphism?.

The second purpose of this paper is to show that, surprisingly,  the answer to $(Q)$ is positive for every bounded operator such that their Ces\`{a}ro sums are properly bounded (which includes $(C, \alpha)$-bounded operators). More precisely, we  construct appropriate subalgebra  $\tau^{\alpha}(k^{\alpha+1}) \subset \ell^{1}$ and then we  prove that the following assertions are equivalent:
\begin{itemize}
\item[(i)] $T$ is $(C, \alpha)$-bounded operator.
\item[(ii)] There exists a bounded algebra homomorphism $\theta : \tau^{\alpha}(k^{\alpha+1}) \to \mathcal{B}(X)$ such that $\theta(e_1)=T.$
\end{itemize}
In the limit case, the following assertions are equivalent:
\begin{itemize}
\item[(a)] $T$ is power bounded.
\item[(b)] There exists a bounded algebra homomorphism $\theta : \ell^1 \to \mathcal{B}(X)$ such that $\theta(e_1)= T.$
    \item[(c)] For any $0<\alpha<1$, there exist bounded algebra homomorphisms $\theta_\alpha : \tau^{\alpha}(k^{\alpha+1}) \to \mathcal{B}(X)$ such that $\theta_\alpha(e_1)=T$ and
        $\displaystyle{
        \sup_{0<\alpha<1}\Vert \theta_\alpha\Vert <\infty.}
        $
\end{itemize}
This paper is organized as follows: In order to construct a suitable Banach algebra and the corresponding homomorphism,   we introduce in Section 2 the notion of $\alpha$-th fractional Weyl sum as follows:
\begin{equation*}
W^{-\alpha}f(n) = \sum_{j=n}^{\infty} k^{\alpha}(j-n) f(j), \qquad n\in \N_0.
\end{equation*}
see Definition \ref{WeylDifference} below. We state their main algebraic properties in Proposition \ref{WeylSumProp}. Then, we  introduce Banach algebras $\tau^{\alpha}(\phi)$ as the completion of the space of sequences $c_{0,0}$ under the norm
$ q_{\phi}(f):=\displaystyle\sum_{n=0}^{\infty}\phi(n)|W^{\alpha}f(n)|$  (Theorem \ref{th3.1}). The weighted sequences $\phi$ need to verify some summability conditions (Definition \ref{condi}) to  prove that the space $\tau^{\alpha}(\phi)$ is a Banach algebra. It is remarkable that such Banach algebras extends those defined  for $\alpha \in \mathbb{N}_0$  and $\phi=k^{\alpha+1}$ in \cite[Section 4]{Gale}. There they are considered to study subalgebras of analytic functions on the unit disc contained in the Koremblyum and (analytic) Wiener algebra.

Section 3 contains an interesting characterization for the Ces\`{a}ro sum of powers of a given $(C,\alpha)$-bounded operator $T\in \mathcal{B}(X)$ solely in terms of certain functional equation (Theorem \ref{TheoremEcFunc}). The obtained characterization corresponds to an extension of the well known functional equation for the corresponding discrete semigroup $\mathcal{T},$ namely
$$
T^nT^m= T^{n+m}, \quad n,m \in \mathbb{N}_0.
$$
Theorem \ref{homomorphism} gives a complete answer to question $(Q)$ by defining
a bounded algebra homomorphism $\theta:\tau^{\alpha}(\phi)\to \mathcal{B}(X)$ given explicitly by $$\theta(f)x:=\displaystyle\sum_{n=0}^{\infty}W^{\alpha}f(n)\Delta^{-\alpha} \mathcal{T}(n)x,\qquad f\in \tau^{\alpha}(\phi),\quad x\in X.$$
This homomorphism enjoys remarkable properties. The existence of  bounded homomorphisms in these new Banach algebras completely characterizes  the growth of Ces\`{a}ro sums in Corollary \ref{reci}; in particular   bounded homomorphisms from algebras $\tau^{\alpha}(k^{\alpha+1})$ characterizes  $(C,\alpha)$-boundedness (Corollary
 \ref{cor5.7}). Such connection seems to be new in the current literature as well as the functional equation found in the beginning of this  section.

The $Z$-transform technique  may be traced back to De
Moivre around the year 1730. In fact, De Moivre introduced the more
general concept of ``generating functions'' to probability theory. It is interesting compare the $Z$-transform (discrete case) versus Laplace transform (continuous case), see for example \cite[Section 6.7]{Elaydi}. In Section 4, we use the Widder space $C^\infty_W((\omega,\infty),X; \hbox{m})$  where $\hbox{m}$ is Borel measure on $\R_+$, introduced in \cite{cho}, to give a new characterization of summable vector-valued sequences in in terms of $Z$-transform  in Theorem \ref{widder}. We complete the approach given in Section 3 involving the $Z$-transform and  resolvent operators in Theorem \ref{th5.5}.

  Finally, in Section 5  we present several applications, counterexamples and final comments on this paper. A straightforward application is to obtain the Abel means by subordination to the Ces\`{a}ro sums, as Theorem \ref{abels} shows. This point of view allows to improve some previous results given in \cite{LSS}. Some results presented in this paper are inspired in similar ones obtained for $\alpha$-times integrated semigroups, see \cite{GM}. In Section 5.2, we show a natural connection between both operator theories. In Section 5.3, we present some counterexamples of algebra homomorphisms defined from some Banach algebras which cannot be extended to some larger algebras. A future research line, the extension of celebrated Katznelson-Tzafriri  to $(C, \alpha)$-bounded operators, is commented in Section 5.4.

\bigskip

\noindent{\bf Notation.} We denote by $\{e_n\}_{n\in \N_0}$ the set of  canonical sequences given by $e_n(j)=\delta_{n,j}$ where $\delta_{n,j}$ is the known Kronecker delta, i.e., $\delta_{n,j}=1$ is $n=j$ and $0$ in other case.  Let $X$ be a Banach space and $\ell^p(X)$  the set of vector-valued sequences \mbox{$f:\N_0\to X$} such that $\displaystyle\sum_{n=0}^{\infty}\lVert f(n)\rVert^p<\infty,$ for $1\le p<\infty$;  and $c_{0,0}(X)$ the set of vector-valued sequences with finite support. When $X=\C$ we write $\ell^p$ and  $c_{0,0}$ respectively.  It is well known that $\ell^1$ is a Banach algebra with the usual (commutative and associative) convolution product $$(f*g)(n)=\displaystyle\sum_{j=0}^n f(n-j)g(j), \qquad n\in \N_0.$$
Consider $\phi:\N_0\to \R^+$ a positive  sequence, and $\ell^{1}_\phi$  is the Banach spaced  formed by complex sequences $f:\N_0\to \C$ such that $\sum_{n\in \N_0}\phi(n)\vert f(n)\vert <\infty$.  We write $f^{\ast n}=f\ast f^{\ast(n-1)}$ for $n\ge 2$, $f^{\ast 1}=f$ and $f^{\ast 0}=e_0;$ in particular $e_n=e_1^{\ast n}$ for $n\in \N_0$.

Throughout the paper, we use the variable constant convention, in which $C$ denotes a constant
which may not be the same from line to line. The constant is frequently written with subindexes
to emphasize that it depends on some parameters.

\section{Weyl differences  and convolution Banach algebras}
\setcounter{theorem}{0}
\setcounter{equation}{0}



In this section, we  define certain spaces of sequences that corresponds to an extension in two different directions of those considered in the recent paper  \cite[Definition 4.2]{Gale}. We consider a positive order of regularity in Weyl differences (Definition \ref{WeylDifference}) and different order of growth of Weyl differences (Definition \ref{condi}). These spaces correspond to Banach subalgebras of the space $\ell^1$ and are important to  obtain a further characterization via homomorphisms for Ces\`{a}ro sums in the next section.

 We consider the usual difference operator $\Delta f(n)=f(n+1)-f(n),$ for $n\in\N_{0},$ its powers
$\Delta^{k+1}=\Delta^k\Delta=\Delta\Delta^k,$ for $k\in\N,$ and we write by
$\Delta^0f=f $ and $\Delta^1=\Delta$. It is easy to see that $$\Delta^k f(n)=\displaystyle\sum_{j=0}^k(-1)^{k-j}\binom{k}{j}f(n+j),\qquad n\in\N_{0},$$ see for example \cite[(2.1.1)]{Elaydi} and then $\Delta^m:c_{0,0}\to c_{0,0}$ for $m\in \N_0$. In addition, for $\alpha>0,$ we consider the  well-known scalar sequence $(k^{\alpha}(n))_{n=0}^{\infty}$ defined by $$k^{\alpha}(n):=\frac{\Gamma(n+\alpha)}{\Gamma(\alpha)\Gamma(n+1)}={n+\alpha-1\choose \alpha-1},\qquad n\in\N_{0}.$$
In the classical Zygmund's monographic, the numbers $k^\alpha(n)$ are called as Ces\`{a}ro numbers of order $\alpha$ (\cite[Vol. I, p.77]{Zygmund}) and written by $k^\alpha(n)=A^{\alpha-1}_n$. However  the notation as function $k^\alpha$  will facilitate the understanding of this paper. Kernels $k^{\alpha}$ may equivalently be defined by means of the generating function:
\begin{equation}\label{eq2.1}
\sum_{n=0}^{\infty} k^{\alpha}(n) z^{n} = \frac{1}{(1-z)^{\alpha}}, \quad |z| < 1,\quad \alpha>0,
\end{equation}
 and satisfies the semigroup property, that is, $k^{\alpha}*k^{\beta}=k^{\alpha+\beta}$ for $\alpha, \beta >0$. Furthermore,  the following equality holds: for $\alpha>0$,   \begin{equation}\label{double}
 k^{\alpha}(n)=\frac{n^{\alpha-1}}{\Gamma(\alpha)}(1+O({1\over n})), \qquad n\in \N, \end{equation}
(\cite[Vol. I, p.77 (1.18)]{Zygmund}) and $k^\alpha$ is increasing (as a function of $n$) for $\alpha >1$, decreasing for $1>\alpha >0$ and $k^1(n)=1$ for $n\in \N$ (\cite[Theorem III.1.17]{Zygmund}). It is straightforward to check that  $k^\alpha(n)\le k^\beta(n)$ for $\beta \ge \alpha>0$ and $n\in \N_0$. The Gautschi inequality states that
\begin{equation}\label{gau}
x^{1-s}<{\Gamma(x+1)\over \Gamma(x+s)}<(x+1)^{1-s}, \qquad x\ge 1, \quad 0<s<1,
\end{equation}
(\cite{Gautschi}), which implies that
$$
{(n+1)^{\alpha-1}\over \Gamma(\alpha)}<k^\alpha(n)<{n^{\alpha-1}\over \Gamma(\alpha)}, \qquad n\in \N, \quad 0<\alpha<1.
$$

Note that  when $\alpha=0$ we have
\begin{equation*} \label{rem2.2} k^{0}(n):=\displaystyle\lim_{\alpha\to 0^{+}}k^{\alpha}(n)=e_{0}(n),\qquad n\in \N_0.
\end{equation*}

\begin{lemma} \label{duplicacion} For $\alpha >0$, there exists $C_\alpha>0$ such that
$$
k^\alpha(2n)\le C_\alpha k^\alpha(n), \qquad n\in \N_0.
$$
In particular for $0<\alpha<1$, the following equality holds
$$
k^{\alpha+1}(2n)< 2^\alpha  k^{\alpha+1}(n)\left(1+{1-\alpha\over2(1+\alpha)}\right)^\alpha, \qquad n\in \N_0.
$$
\end{lemma}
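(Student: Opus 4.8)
The plan is to reduce both assertions to estimating the quotient $k^\alpha(2n)/k^\alpha(n)$, which by the definition of the Ces\`aro kernel is a ratio of Gamma functions. For the first (qualitative) statement I would write, for $n\ge 1$,
$$
\frac{k^\alpha(2n)}{k^\alpha(n)}=\frac{\Gamma(2n+\alpha)\,\Gamma(n+1)}{\Gamma(2n+1)\,\Gamma(n+\alpha)},
$$
and invoke the asymptotic \eqref{double}: since $k^\alpha(m)=\frac{m^{\alpha-1}}{\Gamma(\alpha)}\bigl(1+O(1/m)\bigr)$, this ratio converges to $2^{\alpha-1}$ as $n\to\infty$. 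A sequence of positive numbers that converges (together with the value $1$ at $n=0$) is bounded, so its supremum is a finite constant $C_\alpha$, which is exactly the claimed bound. (Alternatively, for $0<\alpha\le 1$ the kernel $k^\alpha$ is non-increasing, so one may simply take $C_\alpha=1$, and only $\alpha>1$ requires the asymptotic argument.)

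For the sharp second assertion I would instead exploit Gautschi's inequality \eqref{gau}. First I factor
$$
\frac{k^{\alpha+1}(2n)}{k^{\alpha+1}(n)}=\frac{\Gamma(2n+1+\alpha)}{\Gamma(2n+1)}\cdot\frac{\Gamma(n+1)}{\Gamma(n+1+\alpha)},
$$
and then write each Gamma-quotient in the form $\Gamma(x+1)/\Gamma(x+s)$ with $s=1-\alpha\in(0,1)$, so that the exponent $1-s$ equals $\alpha$. Applying \eqref{gau} to the first factor with $x=2n+\alpha$ gives the upper bound $(2n+1+\alpha)^\alpha$, while the second factor, being the reciprocal of $\Gamma(n+1+\alpha)/\Gamma(n+1)$ with $x=n+\alpha$, is bounded above by $(n+\alpha)^{-\alpha}$. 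Multiplying these,
$$
\frac{k^{\alpha+1}(2n)}{k^{\alpha+1}(n)}<\left(\frac{2n+1+\alpha}{n+\alpha}\right)^\alpha
=2^\alpha\left(1+\frac{1-\alpha}{2(n+\alpha)}\right)^\alpha.
$$

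To finish, for $n\ge 1$ I would use $n+\alpha\ge 1+\alpha$ together with $1-\alpha>0$ to replace $n+\alpha$ by $1+\alpha$ in the denominator, which only enlarges the right-hand side, yielding precisely
$$
k^{\alpha+1}(2n)<2^\alpha\left(1+\frac{1-\alpha}{2(1+\alpha)}\right)^\alpha k^{\alpha+1}(n).
$$
The case $n=0$ is disposed of separately and trivially, since both kernels equal $1$ while the constant on the right exceeds $1$.

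The main obstacle is purely bookkeeping around Gautschi's inequality: I must verify its hypothesis $x\ge 1$ for each factor (which is what forces the separate, elementary treatment of $n=0$) and, crucially, select the correct one-sided estimate for each of the two Gamma-quotients so that they combine into the stated constant rather than a weaker one. The remaining algebra, namely the identity $\frac{2n+1+\alpha}{n+\alpha}=2+\frac{1-\alpha}{n+\alpha}$ and the monotonicity in $n$, is routine.
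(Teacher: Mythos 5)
Your proof is correct, and for the sharp second inequality it takes a genuinely different route from the paper's. For the first statement both arguments coincide (boundedness of the ratio $k^\alpha(2n)/k^\alpha(n)$ via \eqref{double}), and your observation that $C_\alpha=1$ works for $0<\alpha\le 1$ by monotonicity is a pleasant shortcut. For the second statement, the paper first invokes the Legendre duplication formula $\Gamma(z)\Gamma(z+\tfrac12)=2^{1-2z}\sqrt{\pi}\,\Gamma(2z)$, applied to both $\Gamma(\alpha+1+2n)$ and $\Gamma(2n+1)$, so as to produce the factor $2^\alpha$ together with the quotient $\Gamma(\tfrac{\alpha}{2}+\tfrac12+n)\Gamma(\tfrac{\alpha}{2}+1+n)\big/\bigl(\Gamma(\alpha+1+n)\Gamma(\tfrac12+n)\bigr)$, and only then applies Gautschi, twice, with exponent $\alpha/2$, obtaining the intermediate bound $2^\alpha k^{\alpha+1}(n)\bigl(1+\tfrac{1-\alpha}{2(\alpha+n)}\bigr)^{\alpha/2}$ before relaxing $\alpha/2$ to $\alpha$ and $n+\alpha$ to $1+\alpha$. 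You skip the duplication formula altogether: Gautschi applied directly to $\Gamma(2n+1+\alpha)/\Gamma(2n+1)$ and $\Gamma(n+1)/\Gamma(n+1+\alpha)$ with exponent $\alpha$, combined with the identity $\tfrac{2n+1+\alpha}{n+\alpha}=2+\tfrac{1-\alpha}{n+\alpha}$, produces the factor $2^\alpha$ purely algebraically. Your route is more elementary, and its two applications of \eqref{gau} sit comfortably inside the hypothesis $x\ge 1$ for every $n\ge 1$ (namely $x=2n+\alpha$ and $x=n+\alpha$), whereas the paper's application to $\Gamma(\tfrac{\alpha}{2}+\tfrac12+n)/\Gamma(\tfrac12+n)$ corresponds to $x=n-\tfrac{1-\alpha}{2}$, which falls below $1$ at $n=1$; in exchange, the paper's duplication route yields the sharper intermediate exponent $\alpha/2$. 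Your explicit (trivial) verification at $n=0$ also covers a case the paper leaves implicit. Both arguments arrive at exactly the stated constant.
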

\begin{proof} The proof of the first inequality is  straightforward by the inequality (\ref{double}). To show the second inequality, we use the know doubling equality for Gamma function
$$
\Gamma(z)\Gamma(z+{1\over 2})=2^{1-2z}\sqrt{\pi}\Gamma(2z), \qquad \Re z>0,
$$ to obtain  that
\begin{eqnarray*}
k^{\alpha+1}(2n)&=&{\Gamma(\alpha +1+2n)\over \Gamma(\alpha+1)\Gamma(2n+1)}=2^\alpha k^{\alpha +1}(n){\Gamma({\alpha\over 2}+{1\over 2}+n)\Gamma({\alpha\over 2}+{1}+n)\over \Gamma({\alpha}+{1}+n)\Gamma({1\over 2}+n)}, \qquad n\ge 1.
\end{eqnarray*}
We apply the Gautschi inequality (\ref{gau}) to get that
\begin{eqnarray*}
{\Gamma({\alpha\over 2}+{1\over 2}+n)\over \Gamma({1\over 2}+n)}&<&({\alpha\over 2}+{1\over 2}+n)^{\alpha \over 2},\cr
{\Gamma({\alpha\over 2}+{1}+n)\over \Gamma(\alpha+{1}+n)}&<&(\alpha+n)^{-\alpha \over 2},
\end{eqnarray*}
for $0<\alpha<1$ and we conclude that
$$
k^{\alpha+1}(2n)<2^\alpha k^{\alpha +1}(n)\left(1+{1-\alpha\over 2(\alpha +n)}\right)^{\alpha \over 2}\le 2^\alpha k^{\alpha +1}(n)\left(1+{1-\alpha\over 2(1+\alpha)}\right)^{\alpha},
$$
for $n \ge 1$ and $0<\alpha<1$.
\end{proof}

 The Ces\`{a}ro sum of order $\alpha$ of $f$ is defined by $$\Delta^{-\alpha}f(n):=(k^{\alpha}*f)(n)=\displaystyle\sum_{j=0}^n k^{\alpha}(n-j)f(j),\qquad n\in\N_{0}, \alpha>0.$$
 Again we prefer to follow the notation $\Delta^{-\alpha}f(n)$ instead of $S_n^{\alpha-1}$ used in \cite{Zygmund}. Note that  $\Delta^{-\alpha-\beta}f= k^\beta\ast (\Delta^{-\alpha}f)$ and then $\Delta^{-\alpha}\Delta^{-\beta}=\Delta^{-(\alpha+\beta)}=\Delta^{-\beta}\Delta^{-\alpha}$ for $\alpha, \beta >0$,  for more details see again \cite[Vol. I, p.76-77]{Zygmund}. Note also that $\displaystyle\lim_{\alpha\to 0}\Delta^{-\alpha}f(n) =f(n)$ with $\alpha>0$ and $n\in \N_{0}$.



 We write $W=-\Delta$, $W^{m}=(-1)^{m}\Delta^m$ for $m\in \N$. The operator $W$ has inverse in $c_{0,0},$ $W^{-1}f(n)=\displaystyle\sum_{j=n}^{\infty}f(j) $ and its iterations are given by the sum
 $$W^{-m}f(n)= \displaystyle\sum_{j=m}^{\infty}\frac{\Gamma(j-n+m)}{\Gamma(j-n+1)\Gamma(m)}f(j)= \sum_{j=n}^{\infty}k^{m}(j-n)f(j),\qquad n\in \N_{0}$$  for each scalar-valued sequence $f$ such that $\displaystyle\sum_{n=0}^{\infty} |f(n)|n^{m} <\infty$, see for example \cite[p.307]{Gale}.
These  facts and the clear connection  with the Weyl fractional calculus motivates the following definition.

\begin{definition}\label{WeylDifference}{\rm Let $f: \mathbb{N}_0 \to X$  and $\alpha>0$ be given. The  Weyl sum of order $\alpha$ of $f$, $W^{-\alpha}f$, is defined by $$W^{-\alpha}f(n):=\displaystyle\sum_{j=n}^{\infty} k^{\alpha}(j-n)f(j),\qquad n\in\N_0,$$
whenever the right hand side makes sense. The  Weyl difference of order $\alpha$ of $f$, $W^\alpha f$, is defined by $$W^{\alpha}f(n):=W^m W^{-(m-\alpha)}f(n)=(-1)^{m}\Delta^m W^{-(m-\alpha)}f(n),\qquad n\in\N_0,$$ for $m=[\alpha]+1,$  whenever the right hand side makes sense. In particular $W^{\alpha}: c_{0,0}\to c_{0,0}$ for $\alpha \in \R$. }
\end{definition}

Observe that if $\alpha\in\N_0,$ the  Weyl difference of order $\alpha$  coincides with the definition  given in \cite[Section 4]{Gale}. Some general properties are shown in the following proposition.

\begin{proposition}\label{WeylSumProp} Let $f\in c_{0,0}(X).$ The following assertions hold:\begin{itemize}
\item[(i)] For $\alpha,\beta>0,$ $W^{-\alpha}W^{-\beta}f=W^{-(\alpha+\beta)}f=W^{-\beta}W^{-\alpha}f.$
\item[(ii)] For $\alpha >0$ and $n\in\N_0$, we have $\displaystyle\lim_{\alpha\to 0^+}W^{-\alpha}f(n)=f(n).$
\item[(iii)] For $\alpha>0,$ $W^{\alpha}W^{-\alpha}f=W^{-\alpha}W^{\alpha}f=f.$
\item[(iv)]For $\alpha >0$ and $n\in\N_0$, we have $\displaystyle\lim_{\alpha\to 0^+}W^{\alpha}f(n)=f(n).$
\item[(v)]\label{SemPro} For all $\alpha,\beta\in\R$ we have $W^{\alpha}W^{\beta}f=W^{\alpha+\beta}f=W^{\beta}W^{\alpha}f.$
\end{itemize}
\end{proposition}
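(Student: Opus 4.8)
The plan is to exploit that every operator involved---$W=-\Delta$, its integer powers $W^m$, and the fractional sums $W^{-\beta}$---maps $c_{0,0}(X)$ into itself, so that all the series below are in fact finite sums and every reindexing or interchange of summation is unconditionally justified. Indeed, if $\operatorname{supp}f\subset\{0,\dots,N\}$ then $W^{-\beta}f(n)=\sum_{j=n}^{N}k^{\beta}(j-n)f(j)$ already vanishes for $n>N$. I would prove (i) first by direct computation: for $\alpha,\beta>0$,
\[
W^{-\alpha}W^{-\beta}f(n)=\sum_{j=n}^{\infty}k^{\alpha}(j-n)\sum_{l=j}^{\infty}k^{\beta}(l-j)f(l)=\sum_{l=n}^{\infty}f(l)\sum_{i=0}^{l-n}k^{\alpha}(i)k^{\beta}(l-n-i),
\]
and the inner sum is $(k^{\alpha}*k^{\beta})(l-n)=k^{\alpha+\beta}(l-n)$ by the semigroup property $k^{\alpha}*k^{\beta}=k^{\alpha+\beta}$; this yields $W^{-(\alpha+\beta)}f$, and commutativity is immediate from the symmetry in $\alpha,\beta$. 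Statement (ii) is then just the pointwise limit in a finite sum: since $k^{\alpha}(0)=1$ while $k^{\alpha}(m)\to 0$ as $\alpha\to0^{+}$ for $m\ge1$ (i.e.\ $k^{\alpha}\to e_{0}$), letting $\alpha\to0^{+}$ in $W^{-\alpha}f(n)=\sum_{j=n}^{N}k^{\alpha}(j-n)f(j)$ leaves only the $j=n$ term $f(n)$.

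The core of the argument is a commutation lemma: $W\,W^{-\beta}=W^{-\beta}\,W$ on $c_{0,0}(X)$ for every $\beta>0$. I would verify it by expanding both sides; using $k^{\beta}(0)=1$ and reindexing one obtains
\[
WW^{-\beta}f(n)=f(n)+\sum_{j=n+1}^{\infty}\bigl(k^{\beta}(j-n)-k^{\beta}(j-n-1)\bigr)f(j)=W^{-\beta}Wf(n),
\]
and iterating gives $W^{m}W^{-\beta}=W^{-\beta}W^{m}$ for all $m\in\N_0$. With this, (iii) follows: taking $m=[\alpha]+1$ so that $m-\alpha>0$, the forward identity is $W^{\alpha}W^{-\alpha}=W^{m}W^{-(m-\alpha)}W^{-\alpha}=W^{m}W^{-m}=\mathrm{id}$, using (i) and $WW^{-1}=W^{-1}W=\mathrm{id}$; the reverse identity $W^{-\alpha}W^{\alpha}=W^{-\alpha}W^{m}W^{-(m-\alpha)}=W^{m}W^{-\alpha}W^{-(m-\alpha)}=W^{m}W^{-m}=\mathrm{id}$ is precisely where the commutation lemma is essential. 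For (iv) it suffices to take $0<\alpha<1$, whence $m=1$ and $W^{\alpha}f=WW^{-(1-\alpha)}f$; since $\beta\mapsto k^{\beta}(\cdot)$ is continuous and the sums are finite, $W^{-(1-\alpha)}f\to W^{-1}f$ pointwise as $\alpha\to0^{+}$, and applying $W$ gives $W^{\alpha}f(n)\to WW^{-1}f(n)=f(n)$.

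For (v) I would first record that the definition is unambiguous: for any integer $p>\gamma$ one has $W^{p}W^{-(p-\gamma)}=W^{p+1}W^{-(p+1-\gamma)}$, because $W^{p+1}W^{-(p+1-\gamma)}=W^{p+1}W^{-1}W^{-(p-\gamma)}=W^{p}W^{-(p-\gamma)}$ by (i) and $W^{p+1}W^{-1}=W^{p}$. Granting this, the case $\alpha,\beta>0$ reduces to the positive-order composition: choosing integers $m>\alpha$, $l>\beta$ and commuting the integer powers past the sums yields $W^{\alpha}W^{\beta}=W^{m+l}W^{-((m+l)-(\alpha+\beta))}=W^{\alpha+\beta}$, the last equality by the unambiguity just noted since $m+l>\alpha+\beta$. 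The mixed-sign cases all collapse via (iii): for $\alpha>0$ and $\beta=-\beta'$ with $\beta'>0$, if $\alpha\ge\beta'$ one writes $W^{\alpha}=W^{\alpha-\beta'}W^{\beta'}$ (with $W^{0}=\mathrm{id}$ when $\alpha=\beta'$) and cancels $W^{\beta'}W^{-\beta'}=\mathrm{id}$, while if $\alpha<\beta'$ one writes $W^{-\beta'}=W^{-\alpha}W^{-(\beta'-\alpha)}$ and cancels $W^{\alpha}W^{-\alpha}=\mathrm{id}$; the remaining cases ($\alpha,\beta<0$, which is (i), and any zero order, which is trivial) and the reversed products are handled symmetrically using both identities in (iii). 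The main obstacle is not any single estimate but organizing this case bookkeeping cleanly, and in particular isolating the commutation lemma, on which both the reverse identity in (iii) and the mixed cases of (v) silently depend.
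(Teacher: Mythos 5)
Your proof is correct, and it is built from the same elementary ingredients as the paper's (finite support of $f$, Fubini/reindexing of finite sums, the semigroup property $k^{\alpha}*k^{\beta}=k^{\alpha+\beta}$, and the fact that $W^{-m}$ inverts $W^{m}$ on $c_{0,0}$), but it is organized differently precisely where the paper is most terse. For the reverse identity in (iii) the paper does not commute operators: it splits $W^{-\alpha}=W^{-(\alpha+1-m)}W^{-(m-1)}$ via (i), uses $W^{-(m-1)}W^{m}=W$, and then evaluates $W^{-(\alpha+1-m)}WW^{-(m-\alpha)}f$ by an explicit telescoping computation ending in $W^{-1}f(n)-W^{-1}f(n+1)=f(n)$; that computation is, in substance, exactly one instance of your commutation identity $W^{-\gamma}Wg=WW^{-\gamma}g$, which you instead isolate as a lemma, prove once, and iterate to $W^{m}W^{-\beta}=W^{-\beta}W^{m}$. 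Your packaging pays off in (v), which the paper dismisses with ``it is simple to check using the previous results'': you supply what that sentence actually requires, namely the well-definedness of $W^{\gamma}$ independently of the integer exponent $p>\gamma$ chosen in Definition \ref{WeylDifference}, the reduction $W^{\alpha}W^{\beta}=W^{m+l}W^{-((m+l)-(\alpha+\beta))}$ obtained by commuting integer powers past fractional sums, and the mixed-sign cases by cancellation from (iii). In short: same toolkit, but a genuine reorganization --- lemma-then-apply rather than inline computation --- which makes the commutation step reusable and yields a complete proof of (v) where the paper offers none; the paper's version is shorter, yours is more systematic, and it also avoids the small edge case ($\alpha\in\N$, where $\alpha+1-m=0$) that the paper's decomposition has to absorb silently.
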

\begin{proof}
(i) It is clear using the Fubini theorem and the semigroup property $k^{\alpha+\beta}=k^{\alpha}*k^{\beta}$ for $\alpha, \beta >0$. (ii) It is sufficient to apply that $f$ has finite support and $\displaystyle\lim_{\alpha\to 0^+}k^{\alpha}(j)=e_{0}(j)$ for $j\in \N_0$.
(iii) We write $m=[\alpha]+1.$ Applying part (i), for $n\in\N_0,$ we have that $$W^{\alpha}W^{-\alpha}f(n)=W^mW^{-(m-\alpha)}W^{-\alpha}f(n)=W^mW^{-m}f(n)=f(n),$$ since $W^{-m}$ is the inverse of $W^m$ in $c_{0,0}(X),$ see \cite[Section 4]{Gale}. On the other hand, \begin{eqnarray*}
    W^{-\alpha}W^{\alpha}f(n)&=&W^{-(\alpha+1-m)}W^{-(m-1)}W^mW^{-(m-\alpha)}f(n)=W^{-(\alpha+1-m)}W^{1}W^{-(m-\alpha)}f(n)\\
    &=&W^{-(\alpha+1-m)}W^{-(m-\alpha)}f(n)-\displaystyle\sum_{j=n}^{\infty}k^{\alpha+1-m}(j-n)W^{-(m-\alpha)}f(j+1)\\
    &=&W^{-1}f(n)-\displaystyle\sum_{j=n+1}^{\infty}k^{\alpha+1-m}(j-n-1)W^{-(m-\alpha)}f(j) \\
    &=&W^{-1}f(n)-W^{-1}f(n+1)=f(n),
    \end{eqnarray*}
    where we use part (i).(iv) It is sufficient to apply that $f$ has finite support and $\displaystyle\lim_{\alpha\to 0^+}k^{1-\alpha}(j)=1$ for $j\in\N_0.$
(v) It is simple to check using the previous results.
\end{proof}

\begin{example}\label{ex2.3}

\begin{itemize} \item[(i)] {\rm Let $\lambda \in \mathbb{C}\backslash\{0\},$ and $p_{\lambda}(n):=\lambda^{-(n+1)}$ for $n\in\N_0.$  An easy computation shows that the sequence $p_{\lambda}$ is a pseudo-resolvent, that is, it satisfies the Hilbert equation
$$
(\mu -\lambda ) (p_{\lambda} * p_{\mu})(n) = p_{\lambda}(n) - p_{\mu}(n), \quad n \in \mathbb{N}_0.
$$
Moreover, the following identity holds
\begin{equation*}\label{inverso}
p_\lambda\ast (\lambda e_0-e_1)=e_0, \qquad \lambda \in \mathbb{C}\backslash\{0\}.
\end{equation*}
We claim that the functions $p_{\lambda}$ are eigenfunctions for the operator $W^{\alpha}$ for $\alpha\in \R$ and $\vert \lambda\vert >1$ : we have, by \eqref{eq2.1}, that
\begin{align*}
W^{-\alpha}p_{\lambda}(n) = \lambda^{-(n+1)}\sum_{j=0}^{\infty}k^{\alpha}(j)\lambda^{-j}=\frac{\lambda^{\alpha}}{(\lambda-1)^{\alpha}}p_{\lambda}(n), \qquad n\in \mathbb{N}_0.
\end{align*}
By Proposition \ref{WeylSumProp} (iii), we obtain that
$$
W^{\alpha}p_{\lambda}= \frac{(\lambda-1)^{\alpha}}{\lambda^{\alpha}} p_{\lambda},\qquad |\lambda|>1.
$$
}
\item[(ii)]{\rm
Let $\alpha\geq 0$ and $n\in\N_0$ be given. We define \begin{equation*}
h_n^{\alpha}(j):=\left\{\begin{array}{ll}
k^{\alpha}(n-j),&j\leq n \\
0,&j>n.
\end{array} \right.
\end{equation*}Functions  $h_n^{\alpha}$ are denoted by $\Gamma^{\alpha-1}_n$ for $\alpha \in \N_0$ in \cite[Section 4]{Gale}. Note that $h_n^\alpha\in c_{0,0}$ for $n\in \N_0$, in fact, $h_n^\alpha\in \hbox{span}\{e_j\,\,\vert\,\, 0\le j\le n\}$,
$h_0^\alpha=e_0$, $h_1^\alpha= \alpha e_0+e_1$, $h_n^{0}:=\lim_{\alpha \to 0^+}h^\alpha_n=e_{n},$ and
\begin{equation}\label{cesarocano}
h_n^{\alpha}(j)=k^{\alpha}(n-j)=\sum_{l=0}^nk^\alpha (n-l)e_l(j)=\sum_{l=0}^nk^\alpha (n-l)e_1^{\ast l}(j), \qquad 0\le j\le n.
\end{equation}

Then for all $\beta\geq 0$ it is easy to check that $W^{-\beta}h_n^{\alpha}= h_{n}^{\alpha+\beta}$, i.e., $$W^{-\beta}h_n^{\alpha}(j)=\displaystyle\sum_{i=j}^{\infty}k^{\beta}(i-j)h_{n}^{\alpha}(i)=h_{n}^{\alpha+\beta}(j),\qquad j\in\N_0.$$
Using  Proposition \ref{WeylSumProp} (iii), we obtain that
$$
W^{\beta}h_{n}^{\alpha}(j)= h_n^{\alpha-\beta}(j),\qquad j\in\N_0,
$$ for  $0\leq\beta\leq\alpha$ and $n\in\N_0.$
}
\end{itemize}
\end{example}

The following remark shows an interesting duality between the operator $\Delta^{-\alpha}$ and $W^{-\alpha}.$ Similar results may be found in \cite[Section 4]{AbdeljaDual} and \cite[Theorem 4.1 and 4.4]{Abdelja2}.
\begin{remark}\label{Duality}{\rm  Let $f,g\in c_{0,0}$, we consider the usual duality product $\langle\, ,\,\rangle$ given by
$$
\langle f,g \rangle: = \sum_{n=0}^{\infty} f(n)g(n).
$$
By Fubini theorem, we get that $ \langle W^{-\alpha} f, g \rangle  = \langle f, \Delta^{-\alpha}g \rangle$ and consequently,
$$ \langle f,g \rangle =  \langle W^{\alpha}f,\Delta^{-\alpha} g \rangle = \langle \Delta^{-\alpha} f, W^{\alpha}g \rangle.  $$}
\end{remark}

The next lemma includes a equality which is a important tool for further developments in this paper. The proof runs parallel to the proof of the integer case  given in \cite[Lemma 4.4]{Gale} and we do not include here.

\begin{lemma}\label{LemmaTech} Let $f,g\in c_{0,0}$ and $\alpha\geq 0,$ then
\begin{eqnarray*}\label{ConvNorm}
W^{\alpha}(f*g)(n)&=&\displaystyle\sum_{j=0}^n W^{\alpha}g(j)\displaystyle\sum_{p=n-j}^n k^{\alpha}(p-n+j)W^{\alpha}f(p) \\
&&-\displaystyle\sum_{j=n+1}^{\infty} W^{\alpha}g(j)\displaystyle\sum_{p=n+1}^{\infty} k^{\alpha}(p-n+j)W^{\alpha}f(p).
\end{eqnarray*}
\end{lemma}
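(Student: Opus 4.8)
The plan is to reduce the Weyl difference of the convolution to a single ``anti-causal'' sum against the convolution inverse of the Ces\`aro kernel, and then to unfold that expression into the two finite double sums of the statement. Throughout, recall that $f,g\in c_{0,0}$ forces $W^{\alpha}f, W^{\alpha}g\in c_{0,0}$ (Definition \ref{WeylDifference}); hence every series written with upper limit $\infty$ in the statement is in fact a finite sum, and all interchanges of summation below are legitimate Fubini manipulations on finitely many terms.

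First I would establish the compact identity
\[
W^{\alpha}(f*g)(n)=\sum_{l\ge n}k^{-\alpha}(l-n)\,(f*g)(l),\qquad n\in\N_0,
\]
where $k^{-\alpha}$ denotes the convolution inverse of $k^{\alpha}$ (so that $k^{\alpha}*k^{-\alpha}=e_0$ and $\sum_n k^{-\alpha}(n)z^n=(1-z)^{\alpha}$, extending \eqref{eq2.1}). This follows in one line from the adjointness recorded in Remark \ref{Duality}: writing $\Delta^{\alpha}:=(\Delta^{-\alpha})^{-1}$, the relation $\langle W^{\alpha}u,v\rangle=\langle u,\Delta^{\alpha}v\rangle$ applied to $u=f*g$ and $v=e_n$ gives $W^{\alpha}(f*g)(n)=\langle f*g,\Delta^{\alpha}e_n\rangle$, and $\Delta^{\alpha}e_n(l)=k^{-\alpha}(l-n)$. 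Alternatively, for $m=[\alpha]+1$ one writes $W^{\alpha}=(-1)^m\Delta^m W^{-(m-\alpha)}$ with $m-\alpha\in(0,1]$ and telescopes the finite differences using $k^{\gamma}(i)-k^{\gamma}(i-1)=k^{\gamma-1}(i)$; this is the route that runs parallel to the integer case of \cite[Lemma 4.4]{Gale}. This step isolates the analytic content and is where the semigroup and generating-function properties of $k^{\alpha}$ enter.

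Next I would reintroduce $W^{\alpha}f$ and $W^{\alpha}g$ on the right-hand side. Using Proposition \ref{WeylSumProp}(iii) I substitute $f=W^{-\alpha}(W^{\alpha}f)$ and $g=W^{-\alpha}(W^{\alpha}g)$, expand $(f*g)(l)=\sum_{i=0}^{l}f(l-i)g(i)$ via the Weyl-sum definition, and interchange the order of summation so as to read off, for each pair $(p,j)$, the coefficient of $W^{\alpha}f(p)\,W^{\alpha}g(j)$. Splitting the outer sum over $l$ at the index $l=n$ and collapsing the inner $l$-sums with the Ces\`aro convolution identity $k^{-\alpha}*k^{\alpha}=e_0$ produces exactly the two blocks of the statement: the lower-triangular block (indices $\le n$, carrying $+k^{\alpha}(p-n+j)$) and the tail block (indices $>n$, carrying $-k^{\alpha}(p-n+j)$). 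Equivalently, since both sides are bilinear in $(f,g)$ and $c_{0,0}$ is spanned by the $e_a$, one may instead verify the identity only for $f=e_a$, $g=e_b$, where $f*g=e_{a+b}$ and the whole statement becomes a single convolution identity among Ces\`aro numbers of orders $\alpha$ and $-\alpha$, provable from the generating functions $(1-z)^{\pm\alpha}$.

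I expect the main obstacle to be precisely this last bookkeeping: showing that the boundary between the causal convolution $f*g$ and the anti-causal Weyl sum falls exactly at the index $n$, which is what cleaves the single correlation sum into the two displayed double sums and, crucially, reverses the sign on the tail. A minimal sanity check (for instance $f=g=e_1$, $\alpha=1$, $n=0$, where the two blocks evaluate to $1$ and $1$ and cancel to match $W^{1}e_2(0)=0$) confirms that the tail term is genuinely present and that its sign is forced; getting the summation ranges and this sign right in general is the delicate point, whereas convergence and the interchanges are free thanks to finite support.
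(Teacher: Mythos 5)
You take a genuinely different route from the paper, which in fact offers no argument at all for this lemma: it defers to the integer case \cite[Lemma 4.4]{Gale} with the remark that the proof runs parallel. Your reduction to a correlation against the convolution inverse $k^{-\alpha}$ is correct, and your second justification of it is the right one: with $m=[\alpha]+1$,
\[
W^{\alpha}f(n)=\sum_{i\ge 0}k^{-m}(i)\,W^{-(m-\alpha)}f(n+i)=\sum_{l\ge n}\bigl(k^{-m}*k^{m-\alpha}\bigr)(l-n)\,f(l)=\sum_{l\ge n}k^{-\alpha}(l-n)f(l),
\]
since $(1-z)^{m}(1-z)^{-(m-\alpha)}=(1-z)^{\alpha}$. (Your first justification is slightly out of scope as written: Remark \ref{Duality} is stated for pairs in $c_{0,0}$, and $\Delta^{\alpha}e_n$ has infinite support for non-integer $\alpha$; the extension is harmless because $f*g$ has finite support, but it needs saying.) The one point where your sketch misdescribes the mechanism is the split: it does not come from ``splitting the outer sum over $l$ at $l=n$''. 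After substituting $f=W^{-\alpha}(W^{\alpha}f)$, $g=W^{-\alpha}(W^{\alpha}g)$ and interchanging the finitely many nonzero terms, the whole $l$-sum collapses into a causal convolution: the coefficient of $W^{\alpha}f(p)\,W^{\alpha}g(j)$ equals
\[
c_{p,j}=\bigl(k^{-\alpha}*(k^{\alpha}-t_p)*(k^{\alpha}-t_j)\bigr)(p+j-n),
\]
where $t_p(u):=k^{\alpha}(u)$ for $u\ge p+1$ and $t_p(u):=0$ otherwise. Expanding and using $k^{\alpha}*k^{-\alpha}=e_0$ (and the convention $k^{\alpha}(u)=0$ for $u<0$) gives
\[
c_{p,j}=k^{\alpha}(p+j-n)-t_j(p+j-n)-t_p(p+j-n)+\bigl(k^{-\alpha}*t_p*t_j\bigr)(p+j-n).
\]
The last term vanishes identically, because $t_p*t_j$ is supported in $[p+j+2,\infty)$ while its argument is at most $p+j$; moreover $t_j(p+j-n)\neq 0$ exactly when $p\ge n+1$, and $t_p(p+j-n)\neq 0$ exactly when $j\ge n+1$. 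Hence $c_{p,j}=k^{\alpha}(p+j-n)$ when $p,j\le n$, $c_{p,j}=-k^{\alpha}(p+j-n)$ when $p,j\ge n+1$, and $c_{p,j}=0$ when exactly one of $p,j$ exceeds $n$ --- precisely the two blocks of the statement, ranges and sign included, and consistent with your sanity check $f=g=e_1$, $\alpha=1$, $n=0$. So the bookkeeping you flagged as the delicate point does close, and in one stroke rather than by a case chase. What your approach buys over the paper's deferral is a self-contained proof valid for all $\alpha\ge 0$ simultaneously, with all the combinatorics absorbed into the generating-function identities $(1-z)^{\pm\alpha}$; what the paper's (implicit) route buys is brevity, at the cost of asking the reader to redo the integer-case manipulations of \cite{Gale} with fractional binomial coefficients.
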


Following definitions are inspired  in \cite[Definition 1.3]{GM}.

\begin{definition}\label{condi} {\rm Let $\alpha >0$.  We say that a positive sequence $\phi$
belongs to the class $\omega_{\alpha, loc}$, if there is a constant $c_{\phi}>0$ such that
\begin{equation}\label{inte}
\left(\sum_{n=0}^j+\sum_{n=p+1}^{j+p}\right)k^{\alpha}(n)\phi(j+p-n)\le c_{\phi} \phi(j)\phi(p), \qquad 1\le j\le p.
\end{equation}

Moreover, we denote by $\omega_\alpha$ the set of
nondecreasing sequences  $\phi\in \omega_{\alpha, loc}$ which are of exponential type  and
satisfy $\displaystyle{\inf_{n\ge 0} (k^{\alpha+1}(n))^{-1}\phi(n)>0
}$}
\end{definition}

Example of sequences in $\omega_\alpha $ are the following ones:
\begin{itemize}
\item[(i)] any nondecreasing sequence $\phi$ satisfying $\max(k^{\alpha+1}(n), \phi(2n))\le M \phi(n)$ for some $M>0$ and for each $n\ge 0$ (in particular $\phi(n)= n^\beta(1+n^\mu)$ with $\beta+\mu \ge \alpha$ and $\beta,\mu \ge 0$ and $\phi(n)=k^\gamma(n)$ with $\gamma\ge \alpha+1$).
    \item[(ii)] $\phi(n)= k^{\alpha+1}(n)\rho(n)$, where $\rho$ is a nondecreasing weight, i.e., $\rho(n+m)\le C\rho(n)\rho(m)$ for $n,m\in \N_0$.
         \item[(iii)] $\phi(n)= k^{\nu+1}(n)e^{\lambda n}$ for $\nu,\lambda >0$.
\end{itemize}
By the equivalence $\displaystyle{k^\alpha(n)\sim {n^{\alpha-1}\over \Gamma(\alpha)}}$, see formula (\ref{double}), equivalent examples may be given in terms of $ n^{\alpha-1}$. The particular case $\phi(n)=k^{\alpha +1}(n)$ will play a fundamental role in this paper, and the condition (\ref{inte}) is improved.

\begin{lemma} For $0<\alpha <1$, the following inequality holds
$$
\left(\sum_{n=0}^j+\sum_{n=p+1}^{j+p}\right)k^{\alpha}(n)k^{\alpha +1}(j+p-n)\le \left(2^{\alpha+1}\left(1+{1-\alpha\over2(1+\alpha)}\right)^\alpha-1\right) k^{\alpha +1}(j)k^{\alpha +1}(p), \quad 1\le j\le p.
$$
\end{lemma}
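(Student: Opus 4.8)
The plan is to split the left-hand side into the two indicated blocks,
$$
\Sigma_A := \sum_{n=0}^j k^{\alpha}(n)k^{\alpha+1}(j+p-n), \qquad \Sigma_B := \sum_{n=p+1}^{j+p} k^{\alpha}(n)k^{\alpha+1}(j+p-n),
$$
and to estimate each one separately, keeping the product structure $k^{\alpha+1}(j)\,k^{\alpha+1}(p)$ intact in both cases. The two elementary facts I would rely on are: the partial-sum identity $\sum_{n=0}^{N} k^{\alpha}(n)=k^{\alpha+1}(N)$, which is just $k^{\alpha}\ast k^{1}=k^{\alpha+1}$ evaluated at $N$ since $k^{1}\equiv 1$; and the doubling bound from Lemma \ref{duplicacion}, namely $k^{\alpha+1}(2m)< D\,k^{\alpha+1}(m)$ with $D:=2^{\alpha}\bigl(1+\tfrac{1-\alpha}{2(1+\alpha)}\bigr)^{\alpha}$, valid for $0<\alpha<1$. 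Both the monotonicity of $k^{\alpha+1}$ (increasing, as $\alpha+1>1$) and the hypothesis $j\le p$, which gives $j+p\le 2p$, will be used repeatedly.

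For $\Sigma_A$ I would bound $k^{\alpha+1}(j+p-n)\le k^{\alpha+1}(j+p)\le k^{\alpha+1}(2p)$ by monotonicity together with $j+p\le 2p$, pull this factor out of the sum, and recognize $\sum_{n=0}^j k^{\alpha}(n)=k^{\alpha+1}(j)$. The doubling bound then yields $\Sigma_A< D\,k^{\alpha+1}(j)\,k^{\alpha+1}(p)$.

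For $\Sigma_B$ I would first reindex by $m=n-p\in\{1,\dots,j\}$, so that $\Sigma_B=\sum_{m=1}^{j} k^{\alpha}(p+m)\,k^{\alpha+1}(j-m)$. I would estimate the second factor crudely by monotonicity, $k^{\alpha+1}(j-m)\le k^{\alpha+1}(j)$, and treat the first factor through the partial-sum identity: $\sum_{m=1}^j k^{\alpha}(p+m)=\sum_{n=p+1}^{p+j}k^{\alpha}(n)=k^{\alpha+1}(p+j)-k^{\alpha+1}(p)$. Applying the doubling bound to $k^{\alpha+1}(p+j)\le k^{\alpha+1}(2p)< D\,k^{\alpha+1}(p)$ gives $k^{\alpha+1}(p+j)-k^{\alpha+1}(p)<(D-1)\,k^{\alpha+1}(p)$, whence $\Sigma_B<(D-1)\,k^{\alpha+1}(j)\,k^{\alpha+1}(p)$.

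Adding the two estimates produces $\Sigma_A+\Sigma_B<(2D-1)\,k^{\alpha+1}(j)\,k^{\alpha+1}(p)$, and $2D-1=2^{\alpha+1}\bigl(1+\tfrac{1-\alpha}{2(1+\alpha)}\bigr)^{\alpha}-1$ is precisely the asserted constant. The delicate point — and the reason the apparently cleaner route through the full convolution $\sum_{n=0}^{j+p}k^{\alpha}(n)k^{\alpha+1}(j+p-n)=k^{2\alpha+1}(j+p)$ fails — is that bounding the whole sum by $k^{2\alpha+1}(j+p)$ destroys the factorization and blows up for small $j$; indeed the ratio $k^{2\alpha+1}(j+p)/\bigl(k^{\alpha+1}(j)k^{\alpha+1}(p)\bigr)$ behaves like $p^{\alpha}$ when $j=1$. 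The essential trick is therefore to retain the exact factor $k^{\alpha+1}(j)$ in each block and to measure the $\Sigma_B$ contribution through the difference $k^{\alpha+1}(p+j)-k^{\alpha+1}(p)$ rather than through a coarser bound, which is exactly what makes the constant come out as $2D-1$ and not something larger.
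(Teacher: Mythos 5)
Your proposal is correct and follows essentially the same route as the paper's proof: both split the sum into the two blocks, use the partial-sum identity $\sum_{n=0}^{N}k^{\alpha}(n)=k^{\alpha+1}(N)$ together with monotonicity of $k^{\alpha+1}$ to keep the factor $k^{\alpha+1}(j)$ intact, bound the second block via the difference $k^{\alpha+1}(j+p)-k^{\alpha+1}(p)$, and finish with $k^{\alpha+1}(j+p)\le k^{\alpha+1}(2p)$ and the doubling bound of Lemma \ref{duplicacion}. The only cosmetic differences are your reindexing of the second block and applying the doubling bound block-by-block rather than once at the end; the constant $2D-1$ emerges identically.
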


\begin{proof} For $1\le j\le p$, and $\alpha >0$, we have that
\begin{eqnarray*}
\sum_{n=0}^jk^{\alpha}(n)k^{\alpha +1}(j+p-n)&\le& k^{\alpha +1}(j+p)\sum_{n=0}^jk^{\alpha}(n)=k^{\alpha +1}(j+p)k^{\alpha +1}(j)\cr
\sum_{n=p+1}^{j+p}k^{\alpha}(n)k^{\alpha +1}(j+p-n)&\le&k^{\alpha +1}(j-1)\sum_{n=p+1}^{j+p}k^{\alpha}(n)\le k^{\alpha +1}(j)\left(k^{\alpha +1}(j+p)-k^{\alpha +1}(p)\right).
\end{eqnarray*}
As $k^{\alpha +1}$ is an increasing sequence, we have $k^{\alpha +1}(j+p)\le k^{\alpha +1}(2p)$ for $j\le p$ and we apply the Lemma \ref{duplicacion} to conclude the proof.
\end{proof}

\begin{proposition}\label{pieces} Take $0<\alpha \le \beta$ and $\phi \in \omega_{\alpha, loc}$.
\begin{itemize}
\item[(i)] Then $\omega_{\beta, loc}\subset \omega_{\alpha, loc}$ and $\omega_{\beta}\subset \omega_{\alpha}$.
\item[(ii)] $(k^\alpha \ast \phi)(2n)\le c_\phi \phi^2(n)$ for  $ n\ge 1$.
\item[(iii)] $k^\alpha(n)\le c_\phi \phi(n)\le a^n$ for $n \ge 1$ and some $a>0$.
\item[(iv)] $k^{2\alpha}(2n)\le c\phi^2(n)$ for $n\in \N_0$ and $c>0$.
\item[(v)] $\phi(n+1)\le C\phi(n)$ for some $C>0$ independent of $n\ge 1$.
\item[(vi)] $k^\beta \in \omega_{\alpha, loc}$ if and only if $\beta \ge \alpha+1$.
\end{itemize}

\end{proposition}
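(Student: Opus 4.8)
The plan is to derive every item from the single defining inequality \eqref{inte} of the class $\omega_{\alpha,loc}$ by specializing the free parameters $1\le j\le p$ to convenient values, and to combine these specializations with three standing facts already available: the monotonicity $k^\alpha(n)\le k^\beta(n)$ for $\beta\ge\alpha$, the semigroup identity $k^\alpha\ast k^\beta=k^{\alpha+\beta}$ (so that in particular $\sum_{n=0}^N k^\alpha(n)=(k^\alpha\ast k^1)(N)=k^{\alpha+1}(N)$ because $k^1\equiv1$ on $\N_0$), and the doubling Lemma \ref{duplicacion}. For (i) I would simply insert $k^\alpha\le k^\beta$ into \eqref{inte} to obtain $\omega_{\beta,loc}\subset\omega_{\alpha,loc}$; the inclusion $\omega_\beta\subset\omega_\alpha$ then follows because the ``nondecreasing'' and ``exponential type'' requirements transfer verbatim, while $k^{\alpha+1}\le k^{\beta+1}$ turns $\inf_n (k^{\beta+1}(n))^{-1}\phi(n)>0$ into $\inf_n (k^{\alpha+1}(n))^{-1}\phi(n)>0$.

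For (ii)--(v) I would read off information from \eqref{inte} through three choices of $(j,p)$. Taking $j=p=n$ merges the two sums into $\sum_{m=0}^{2n}k^\alpha(m)\phi(2n-m)=(k^\alpha\ast\phi)(2n)$ and gives (ii) at once. Keeping only the single summand $m=j$ of the first sum leaves $k^\alpha(j)\phi(p)\le c_\phi\phi(j)\phi(p)$, valid for all $p\ge j$, hence (dividing by $\phi(p)$ and taking $p=j$) the first inequality of (iii), namely $k^\alpha(j)\le c_\phi\phi(j)$. Choosing $j=1,\ p=n$ and discarding all but the leading term $k^\alpha(0)\phi(n+1)=\phi(n+1)$ produces the one-step estimate $\phi(n+1)\le c_\phi\phi(1)\phi(n)$, which is exactly (v); iterating it yields $\phi(n)\le\phi(1)C^{\,n-1}$ and thus the exponential bound $c_\phi\phi(n)\le a^n$ that completes (iii). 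Finally (iv) comes from writing $k^{2\alpha}=k^\alpha\ast k^\alpha$, bounding each factor $k^\alpha(2n-m)\le\tilde c\,\phi(2n-m)$ via (iii) (with $k^\alpha(0)=1\le\phi(0)^{-1}\phi(0)$ absorbed into the constant $\tilde c$), recognizing the remaining sum as $(k^\alpha\ast\phi)(2n)$, and applying (ii); the case $n=0$ is trivial.

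The heart of the proposition, and the step I expect to be the main obstacle, is (vi). For the sufficiency $\beta\ge\alpha+1\Rightarrow k^\beta\in\omega_{\alpha,loc}$, I would estimate the two sums of \eqref{inte} with $\phi=k^\beta$ separately, using that $k^\beta$ is increasing (since $\beta>1$): in the first sum $k^\beta(j+p-n)\le k^\beta(j+p)$, so it is at most $k^\beta(j+p)\sum_{n=0}^j k^\alpha(n)=k^\beta(j+p)\,k^{\alpha+1}(j)$; in the second sum $k^\beta(j+p-n)\le k^\beta(j)$, so it is at most $k^\beta(j)\bigl(k^{\alpha+1}(j+p)-k^{\alpha+1}(p)\bigr)\le k^\beta(j)\,k^{\alpha+1}(j+p)$. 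Then $k^{\alpha+1}\le k^\beta$ together with the doubling bound $k^\beta(j+p)\le k^\beta(2p)\le C_\beta k^\beta(p)$ (licit because $j\le p$) fold both contributions into $2C_\beta\,k^\beta(j)k^\beta(p)$. For the necessity I would exploit the diagonal $j=p$, where the two sums collapse into the full convolution $\sum_{n=0}^{2p}k^\alpha(n)k^\beta(2p-n)=(k^\alpha\ast k^\beta)(2p)=k^{\alpha+\beta}(2p)$, so membership forces $k^{\alpha+\beta}(2p)\le c\,k^\beta(p)^2$ for all $p\ge1$; inserting the asymptotics $k^\gamma(n)\sim n^{\gamma-1}/\Gamma(\gamma)$ from \eqref{double}, this reads $p^{\alpha+\beta-1}\lesssim p^{2\beta-2}$, which as $p\to\infty$ can hold only if $\alpha+\beta-1\le 2\beta-2$, i.e. $\beta\ge\alpha+1$.

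The delicate point throughout (vi) is that both the splitting in the sufficiency and the collapse in the necessity must reproduce the \emph{exact} threshold $\alpha+1$ rather than a loose one: choosing the right monotonicity direction for $k^\beta$, summing $k^\alpha$ correctly into $k^{\alpha+1}$, and matching the powers of $p$ in the asymptotic comparison are precisely where the argument is easy to mishandle. The remaining parts, by contrast, are essentially bookkeeping once the appropriate specialization of \eqref{inte} is identified.
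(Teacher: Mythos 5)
Your proposal is correct, and for items (i)--(v) and the necessity half of (vi) it is essentially the paper's own argument: (i) by inserting $k^\alpha\le k^\beta$ into \eqref{inte}; (ii) by the diagonal choice $j=p=n$; (iii) and (v) by extracting single terms of \eqref{inte} (the term $m=j$, respectively the term $m=0$ with $j=1$, $p=n$) and iterating the one-step bound $\phi(n+1)\le c_\phi\phi(1)\phi(n)$; (iv) by comparing $k^\alpha\ast k^\alpha$ with $k^\alpha\ast\phi$ via (ii) and (iii); and the necessity in (vi) by the diagonal collapse to $k^{\alpha+\beta}(2p)\le c\,k^\beta(p)^2$ combined with the asymptotics \eqref{double}, forcing $\alpha+\beta-1\le 2(\beta-1)$. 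The one place where you genuinely diverge is the sufficiency half of (vi). The paper disposes of it in one line: $k^{\alpha+1}\in\omega_{\alpha,loc}$ (resting on the example list after Definition \ref{condi} and on Lemma 2.8, which is stated only for $0<\alpha<1$), and then $k^\beta=k^{(\beta-1)+1}\in\omega_{\beta-1,loc}\subset\omega_{\alpha,loc}$ by part (i). You instead verify \eqref{inte} for $\phi=k^\beta$ directly: monotonicity of $k^\beta$ (valid since $\beta\ge\alpha+1>1$) bounds the two sums by $k^\beta(j+p)k^{\alpha+1}(j)$ and $k^\beta(j)\bigl(k^{\alpha+1}(j+p)-k^{\alpha+1}(p)\bigr)$ respectively, using $\sum_{n=0}^{j}k^\alpha(n)=k^{\alpha+1}(j)$, and then $k^{\alpha+1}\le k^\beta$ together with the doubling estimate $k^\beta(j+p)\le k^\beta(2p)\le C_\beta k^\beta(p)$ of Lemma \ref{duplicacion} closes the bound with constant $2C_\beta$. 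This is precisely the technique of the paper's Lemma 2.8 generalized from $\phi=k^{\alpha+1}$, $0<\alpha<1$, to arbitrary $\beta\ge\alpha+1$; what it buys is a self-contained argument that does not lean on the unproved example list (nor on the restriction $0<\alpha<1$), at the mild cost of a non-explicit constant.
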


\begin{proof} (i) Since $k^\beta(n)\ge k^\alpha (n)$ for  $n \in \N_0$, then  $\omega_{\beta, loc}\subset \omega_{\alpha, loc}$ and $\omega_{\beta}\subset \omega_{\alpha}$ for $\beta \ge \alpha >0$. (ii) It is enough to take $j=p$ in \eqref{inte} to obtain the inequality. (iii) By part (ii), we have that
$$
k^\alpha(n)\phi(n)\le  (k^\alpha \ast \phi)(2n)\le c_\phi \phi^2(n), \qquad n\ge 1,
$$
and we get the first inequality. For $n\ge 1$, we apply the inequality \eqref{inte} $n-1$ times to obtain that
$$
c_\phi\phi(n)= c_\phi k^\alpha(0)\phi (n-1+1)\le c^2_\phi\phi(1)\phi(n-1)\le \left(c_\phi \phi(1)\right)^n.
$$
(iv)  We combine parts (ii), (iii) and the semigroup property of kernels $k^\alpha$ to conclude that $$ c_\phi \phi^2(n)\ge(k^\alpha \ast \phi) (2n)\ge c'(k^\alpha \ast  k^\alpha )(2n)=c'k^{2\alpha}(2n) , \qquad n\in \N_0,$$ for some $c'>0$. (v) Take $j=1$ and $p=n\ge 1$  in \eqref{inte} to get
 $$
 \phi(n+1)=k^\alpha(0)\phi(n+1)\le \sum_{m=0}^1k^\alpha(m)\phi(n+1-m)\le c_\phi \phi(1)\phi(n), \qquad n\ge 1.
 $$
(vi) If $k^\beta \in \omega_{\alpha, loc}$ then we apply \eqref{double} and part (ii) to get
$$(k^\alpha \ast k^\beta)(2n)=k^{\alpha+\beta}(2n)\sim 2^{\alpha+\beta-1}{n^{\alpha +\beta-1}\over \Gamma(\alpha+\beta)}\le c {n^{2(\beta-1)}\over \Gamma^2(\beta)}, \qquad n\ge 1
$$
and we conclude  that $\beta \ge  \alpha+1$. Note that $k^{\alpha+1}\in \omega_{\alpha, loc}$ and then $k^\beta \in \omega_{\alpha, loc}$ for $\beta\ge \alpha +1$ for part (i) and we conclude the proof.
\end{proof}

For $\alpha\geq 0,$ and $\phi \in \omega_{\alpha, loc}$, we define the application $q_{\phi}:c_{0,0}\to [0,\infty)$ given by $$q_{\phi}(f):=\displaystyle\sum_{n=0}^{\infty}\phi(n)|W^{\alpha}f(n)|, \qquad f\in c_{0,0}.$$ Note that for $\alpha=0$ the above application corresponds to the usual norm in $\ell^1_\phi$. In the case $\phi=k^{\alpha +1}$, we write $q_\alpha$ instead of $q_{k^{\alpha+1}}$ and $q_0=\Vert \quad \Vert_1$ for $\alpha \ge 0$.  By \eqref{double}, the norm $q_{\alpha}$ is equivalent to the norm $\widetilde{q_{\alpha}}$ given by $$\widetilde{q_{\alpha}}(f):=|f(0)|+\displaystyle\sum_{n=1}^{\infty}n^{\alpha}|W^{\alpha}f(n)|.$$ This expression was considered for the case $\alpha\in\N_0$ in \cite[Definition 4.2]{Gale}.

 Part of the following result extends \cite[Theorem 4.5]{Gale} and the proof is similar to the proof of \cite[Proposition 1.4]{GM}.  We include the proof to give a complete view of this result.

\begin{theorem}\label{th3.1} Let $\alpha> 0$ and  $\phi \in \omega_{\alpha, loc}$. The application $q_{\phi}$ defines a norm in $c_{0,0}$ and  $$q_{\phi}(f*g)\leq C_{\phi}\,q_{\phi}(f)\,q_{\phi}(g), \qquad f,g\in c_{0,0},$$ with $C_{\phi}>0$ independent of $f$ and $g.$ We denote by $\tau^{\alpha}(\phi)$ the Banach algebra obtained as the completion of $c_{0,0}$ in the norm $q_{\phi}.$ In the case that $\phi \in \omega_{\alpha}$ then

\begin{itemize}

\item[(i)] the operator $\Delta$ is linear and bounded on  $\tau^{\alpha}(\phi)$, $\Delta \in \mathcal{B}(\tau^{\alpha}(\phi))$.
\item[(ii)] $\tau^{\alpha}(\phi)\hookrightarrow\tau^{\alpha}(k^{\alpha+1})\hookrightarrow \ell^1,$ and $\lim_{\alpha \to 0^+}q_\alpha(f)=\Vert f\Vert_1$, for $f\in c_{0,0}$.

    \item[(iii)]  for $0<\alpha<\beta$, $\tau^{\beta}(k^{\beta+1})\hookrightarrow\tau^{\alpha}(k^{\alpha+1})$.
    \item[(iv)] for $0< \alpha <1$,
$$
q_{\alpha}(f*g)\leq \left(2^{\alpha+1}\left(1+{1-\alpha\over2(1+\alpha)}\right)^\alpha-1\right)\,q_{\alpha}(f)\,q_{\alpha}(g), \qquad f, g \in \tau^{\alpha}(k^{\alpha+1}).
$$
\end{itemize}
\end{theorem}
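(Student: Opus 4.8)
The plan is to handle the assertions in turn, since they decouple once the core submultiplicativity estimate is in place. First, that $q_\phi$ is a norm on $c_{0,0}$: nonnegativity, homogeneity and the triangle inequality are immediate from the linearity of $W^\alpha$ and the triangle inequality for the modulus, while definiteness uses that $W^\alpha$ is a bijection of $c_{0,0}$ (Proposition \ref{WeylSumProp}(iii)). Indeed, if $q_\phi(f)=0$ then $\phi(n)>0$ forces $W^\alpha f\equiv 0$, whence $f = W^{-\alpha}W^\alpha f = 0$. Once the submultiplicativity bound is established, the completion $\tau^\alpha(\phi)$ of $c_{0,0}$ is a Banach algebra (rescaling $q_\phi$ by $C_\phi$ produces a genuine algebra norm, and the completion of a normed algebra is a Banach algebra).

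The heart of the matter, and the step I expect to be the main obstacle, is the submultiplicativity estimate. I would start from the identity of Lemma \ref{LemmaTech}, pass to absolute values, and write, with $F := |W^\alpha f|$ and $G := |W^\alpha g|$,
$$
q_\phi(f*g)\le \sum_{n=0}^\infty \phi(n)\left[\sum_{j=0}^n G(j)\sum_{p=n-j}^n k^\alpha(p-n+j)F(p) + \sum_{j=n+1}^\infty G(j)\sum_{p=n+1}^\infty k^\alpha(p-n+j)F(p)\right].
$$
Then I would exchange the order of summation to collect, for each fixed pair $(j,p)$, the full coefficient of $G(j)F(p)$. The change of variable $m = j+p-n$ turns both inner sums into values $k^\alpha(m)$ weighted by $\phi(j+p-m)$; careful tracking of the index ranges shows the first block contributes $m\in[0,\min(j,p)]$ and the second $m\in[\max(j,p)+1,\,j+p]$. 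Hence the total coefficient of $G(j)F(p)$ is exactly the left-hand side of condition \eqref{inte} (after using symmetry to reduce to $j\le p$), so it is bounded by $c_\phi\,\phi(j)\phi(p)$; the degenerate cases $j=0$ or $p=0$ are checked directly and absorbed into the constant. Summing over $j,p$ then factorizes and yields $q_\phi(f*g)\le C_\phi\,q_\phi(f)\,q_\phi(g)$. The delicate point is precisely this index bookkeeping, so that the two blocks of Lemma \ref{LemmaTech} assemble into the single expression of \eqref{inte}.

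With the algebra structure secured, the refinements under $\phi\in\omega_\alpha$ are comparatively short. For (i), I note $W^\alpha\Delta f = -W^{\alpha+1}f$ and $W^{\alpha+1}f = W^1W^\alpha f = -\Delta W^\alpha f$, so $W^{\alpha+1}f(n)=W^\alpha f(n)-W^\alpha f(n+1)$ and $|W^{\alpha+1}f(n)|\le |W^\alpha f(n)|+|W^\alpha f(n+1)|$; multiplying by $\phi(n)$, summing, re-indexing the shifted term and using that $\phi$ is nondecreasing gives $q_\phi(\Delta f)\le 2\,q_\phi(f)$, and $\Delta$ extends boundedly by density. For (ii), the embedding $\tau^\alpha(\phi)\hookrightarrow\tau^\alpha(k^{\alpha+1})$ follows from $k^{\alpha+1}(n)\le C\phi(n)$, i.e. the condition $\inf_n (k^{\alpha+1}(n))^{-1}\phi(n)>0$ defining $\omega_\alpha$; the embedding $\tau^\alpha(k^{\alpha+1})\hookrightarrow\ell^1$ follows by writing $f = W^{-\alpha}W^\alpha f$, interchanging sums, and using $\sum_{m=0}^j k^\alpha(m) = k^{\alpha+1}(j)$, which gives $\|f\|_1\le q_\alpha(f)$; and $\lim_{\alpha\to0^+}q_\alpha(f)=\|f\|_1$ on $c_{0,0}$ follows from the pointwise convergences $W^\alpha f\to f$ (Proposition \ref{WeylSumProp}(iv)) and $k^{\alpha+1}(n)\to 1$, summed over the finite support of $W^\alpha f$.

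Finally, for (iii) I would write $W^\alpha f = W^{-(\beta-\alpha)}W^\beta f$, interchange the order of summation in $q_\alpha(f)$, and invoke the semigroup identity $k^{\alpha+1}*k^{\beta-\alpha} = k^{\beta+1}$ to obtain $q_\alpha(f)\le q_\beta(f)$, hence $\tau^\beta(k^{\beta+1})\hookrightarrow\tau^\alpha(k^{\alpha+1})$. For (iv), I would rerun the submultiplicativity argument for the special weight $\phi = k^{\alpha+1}$ with $0<\alpha<1$, but replace the generic bound \eqref{inte} by the sharp estimate established in the Lemma immediately above, which supplies precisely the constant $2^{\alpha+1}\bigl(1+\tfrac{1-\alpha}{2(1+\alpha)}\bigr)^\alpha - 1$; since that constant is $\ge 1$, the boundary terms $j=0$, $p=0$ cause no trouble.
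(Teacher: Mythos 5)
Your proposal is correct and follows essentially the same route as the paper: the core estimate is obtained from Lemma \ref{LemmaTech} by Fubini-type reindexing so that the coefficient of $|W^{\alpha}g(j)||W^{\alpha}f(p)|$ becomes exactly the left-hand side of condition \eqref{inte}, and parts (i)--(iv) are handled by the same shift/nondecreasing argument, the defining inequality of $\omega_\alpha$, the identity $W^{\alpha}f=W^{-(\beta-\alpha)}W^{\beta}f$ with the kernel semigroup property, and the sharp lemma for $\phi=k^{\alpha+1}$, respectively. If anything, you are slightly more careful than the paper in recording the degenerate terms $j=0$ or $p=0$ and the density argument for extending $\Delta$.
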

\begin{proof} It is clear that $q_{\alpha}$ is a norm in $c_{0,0}.$ Now, applying Lemma \ref{LemmaTech} we have \begin{eqnarray*}
q_{\phi}(f*g)&\leq&\biggl( \displaystyle\sum_{n=0}^{\infty}\sum_{j=0}^{n}\sum_{p=n-j}^{n}+ \displaystyle\sum_{n=0}^{\infty}\sum_{j=n+1}^{\infty}\sum_{p=n+1}^{\infty}\biggr)\phi(n)k^{\alpha}(p-n+j)|W^{\alpha}g(j)||W^{\alpha}f(p)| \\
&=&\biggl( \displaystyle\sum_{j=0}^{\infty}\sum_{n=j}^{\infty}\sum_{p=n-j}^{n}+ \displaystyle\sum_{j=1}^{\infty}\sum_{n=0}^{j-1}\sum_{p=n+1}^{\infty}\biggr)\phi(n)k^{\alpha}(p-n+j)|W^{\alpha}g(j)||W^{\alpha}f(p)| \\
&=&\biggl( \displaystyle\sum_{j=0}^{\infty}\sum_{p=0}^{\infty}\sum_{n=\max(j,p)}^{p+j}+ \displaystyle\sum_{j=1}^{\infty}\sum_{p=1}^{\infty}\sum_{n=0}^{\min(j,p)-1}\biggr)\phi(n)k^{\alpha}(p-n+j)|W^{\alpha}g(j)||W^{\alpha}f(p)|\\
&\le&\phi(0)|W^{\alpha}g(0)||W^{\alpha}f(0)|+c_\phi \displaystyle\sum_{j=1}^{\infty}\sum_{p=1}^{\infty} \phi(j)\phi(p)|W^{\alpha}g(j)||W^{\alpha}f(p)|\le  C_{\phi}\,q_{\phi}(f)\,q_{\phi}(g)
\end{eqnarray*}
where we use Fubini's Theorem twice and the inequality (\ref{inte}) to show the first inequality.

Now take $\phi \in \omega_\alpha$. (i) It is clear that $\Delta$ is a linear operator and
$$
q_\phi (\Delta(f))=\sum_{n=0}^{\infty}\phi(n)|W^{\alpha}f(n)-W^\alpha f(n+1)|\le q_\phi(f)+\sum_{n=1}^{\infty}\phi(n-1)|W^{\alpha}f(n)\vert\le 2q_\phi(f),
$$
for $ f\in \tau^{\alpha}(\phi)$. (ii) It is clear that $\tau^{\alpha}(\phi)\hookrightarrow\tau^{\alpha}(k^{\alpha+1})\hookrightarrow \ell^1.$ By the Monotone Convergence Theorem and Proposition \ref{WeylSumProp} (ii),
  $$\lim_{\alpha \to 0^+}q_\alpha(f)=\lim_{\alpha \to 0^+}\displaystyle\sum_{n=0}^{\infty}k^{\alpha+1}(n)|W^{\alpha}f(n)|=\displaystyle\sum_{n=0}^{\infty}|f(n)| =\Vert f\Vert_1, \qquad f\in c_{0,0}.$$ (iii) Let $f\in c_{0,0},$ and $0< \alpha <\beta$, then
 \begin{eqnarray*}
q_{\alpha}(f)&=&\displaystyle\sum_{n=0}^{\infty}k^{\alpha+1}(n)|W^{\alpha}f(n)|=\displaystyle\sum_{n=0}^{\infty}k^{\alpha+1}(n)|\displaystyle\sum_{j=n}^{\infty}k^{\beta-\alpha}(j-n)W^{\beta}f(j)| \\
&\leq&\displaystyle\sum_{j=0}^{\infty}|W^{\beta}f(j)|\displaystyle\sum_{n=0}^{j}k^{\beta-\alpha}(j-n)k^{\alpha+1}(n)=\displaystyle\sum_{j=0}^{\infty}k^{\beta+1}(j)|W^{\beta}f(j)|=q_{\beta}(f),
\end{eqnarray*} where we have applied Proposition \ref{WeylSumProp} (v) and the semigroup property of $k^{\alpha}.$ (iv) This inequality follows from Lemma (2.8).
\end{proof}

\begin{example}\label{NormEquiv}{\rm Note that sequence $(h^\alpha_n)_{n\in \N_0} \subset \tau^{\alpha}(\phi)$ with $\phi \in \omega_{\alpha, loc}$: By Example \ref{ex2.3} (ii),  $q_\phi(h^\alpha_n)=\phi(n)$ for $n\in \N_0$. Then the series $\displaystyle\sum_{n=0}^\infty W^\alpha f(n)h_n^\alpha $ converges on $\tau^{\alpha}(\phi)$ for every $f\in \tau^{\alpha}(\phi)$.  By Proposition \ref{pieces} (iii)
$$
\vert f(m)\vert\le  \sum_{n=m}^\infty k^\alpha (n-m)\vert W^\alpha(f)(n)\vert \le c_\phi\sum_{n=m}^\infty\phi(n)\vert W^\alpha(f)(n)\vert \le c_\phi q_\phi(f), \qquad m\in \N_0,
$$
wherever $k^\alpha$  or $\phi$ is non-decreasing functions, i.e., for $\alpha \ge 1$ or $\phi \in \omega_{\alpha}$ for example. And then $
f= \displaystyle\sum_{n=0}^\infty W^\alpha f(n)h_n^\alpha
$ on $\tau^{\alpha}(\phi)$.

Take $\phi \in \omega_{\alpha}$ such that $\phi(n) \le Ca^n$ for $a>1$. Then $p_\lambda \in  \tau^{\alpha}(\phi)$ for $\vert \lambda \vert >a$, where sequences $p_\lambda$ are defined in Example \ref{ex2.3} (i), and
$$
q_\phi(p_\lambda) \le C{\vert \lambda-1\vert^\alpha \over \vert \lambda\vert^{\alpha}(\vert \lambda\vert -a)}, \qquad \vert \lambda \vert >a.
$$
In the particular case $\phi=k^{\gamma}$, then $p_\lambda \in  \tau^{\alpha}(k^{\gamma})$ for $\vert \lambda \vert >1$  and $\gamma\ge \alpha+1,$
 \begin{equation}\label{normass}
q_{k^\gamma}(p_\lambda)= {\vert \lambda-1\vert^\alpha \vert \lambda\vert^{\gamma-\alpha-1}\over (\vert \lambda\vert-1)^{\gamma}}, \qquad \vert \lambda \vert >1,
\end{equation}
where we have applied   Example \ref{ex2.3} (i) and the formula \eqref{eq2.1}}.

\end{example}

\section{Ces\`{a}ro sums and algebra homomorphisms}
\setcounter{theorem}{0}
\setcounter{equation}{0}

In this section we present our main results. The algebra structure of Ces\`{a}ro sums are presented in several ways: functional equation (Theorem \ref{TheoremEcFunc}), algebra homomorphism (Theorem \ref{homomorphism}) and resolvent operators (Theorem \ref{th5.5}). Note that these approach in fact  characterizes  the growth of Ces\`{a}ro sums, as Corollary \ref{reci} and Corollary \ref{cor5.7} for $(C, \alpha)$-bounded operators show.
We recall the following definition.

\begin{definition}\label{cesarosum}{\rm
Given a bounded operator $T \in \mathcal{B}(X)$, the Ces\`{a}ro sum of order $\alpha>0$ of $T$, $(\Delta^{-\alpha} \mathcal{T}(n))_{n\ge 0}\subset \mathcal{B}(X)$, is  defined by
\begin{equation*}\label{cesaro}
\Delta^{-\alpha} \mathcal{T}(n)x:=(k^\alpha \ast  \mathcal{T})(n)x = \displaystyle\sum_{j=0}^n k^{\alpha}(n-j)T^j x, \qquad x\in X; \quad n\in \N_0. \end{equation*}
Note that we keep the notation  $ \mathcal{T}(n)=T^n$ for $n\in \N_0$.}
\end{definition}

\begin{example}\label{canoni} {\rm The canonical example of a family of Ces\`{a}ro sum of order $\alpha$ in Banach algebras $\tau^{\alpha}(\phi)$ (in particular in $\ell^1$) is the family $\{h^\alpha_n\}_{n\in \N_0}$ given in Example \ref{ex2.3}(ii). Note that  $(h^\alpha_n)_{n\in \N_0} \subset \tau^{\alpha}(\phi)$ with $\phi \in \omega_{\alpha, loc}$, see Example \ref{NormEquiv}.  We write $ \mathcal{E}(n)=e^{\ast n}_1$ to get
$h_n^\alpha =\Delta^{-\alpha} \mathcal{E}(n)$ for $n\in \N_0$ by equation (\ref{cesarocano}).}
\end{example}

The following theorem characterizes sequences of operators which are Ces\`{a}ro sums  of some order $\alpha>0$  and a fixed operator $T$.

\begin{theorem}\label{TheoremEcFunc} Let $\alpha >0$ and $T, (T_n)_{n\in \N_0}\subset \mathcal{B}(X)$. Then the following assertions are equivalent.
\begin{itemize}
\item[(i)] $T_n = \Delta^{-\alpha}\mathcal{T}(n)$ for $n\in \N_0$.
\item[(ii)]
$T_0 = I$ and the  following functional equation holds: \begin{equation}\label{eq4.2}
 T_nT_m=\displaystyle\sum_{u=m}^{n+m}k^{\alpha}(n+m-u)T_u-\displaystyle\sum_{u=0}^{n-1}k^{\alpha}(n+m-u)T_u \qquad n\geq 1,\, m\in\N_0.
 \end{equation}
\end{itemize}
\end{theorem}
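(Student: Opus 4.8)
The plan is to prove the two implications by different routes: for (i)$\Rightarrow$(ii) I would transport the statement to the scalar ``model'' sequences $h_n^\alpha=\Delta^{-\alpha}\mathcal{E}(n)$ of Example \ref{canoni} and verify a convolution identity there, while for (ii)$\Rightarrow$(i) I would extract a first-order recursion from the functional equation and solve it by induction.

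For (i)$\Rightarrow$(ii), first $T_0=\Delta^{-\alpha}\mathcal{T}(0)=k^\alpha(0)I=I$. For the functional equation I would introduce the evaluation map $\Phi:c_{0,0}\to\mathcal{B}(X)$, $\Phi(f):=\sum_j f(j)T^j$; since the sums are finite and $T^iT^j=T^{i+j}$, the map $\Phi$ is an algebra homomorphism for convolution, and by \eqref{cesarocano} it satisfies $\Phi(h_n^\alpha)=\Delta^{-\alpha}\mathcal{T}(n)=T_n$. Hence $T_nT_m=\Phi(h_n^\alpha*h_m^\alpha)$ and the whole equation \eqref{eq4.2} is the image under $\Phi$ of the scalar identity
\[
h_n^\alpha*h_m^\alpha=\sum_{u=m}^{n+m}k^{\alpha}(n+m-u)h_u^\alpha-\sum_{u=0}^{n-1}k^{\alpha}(n+m-u)h_u^\alpha,\qquad n\ge 1.
\]
It then suffices to prove this scalar identity, and for that I would apply $W^\alpha$ to both sides and use that $W^\alpha$ is injective on $c_{0,0}$ (Proposition \ref{WeylSumProp}(iii)). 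On the right-hand side, $W^\alpha h_u^\alpha=e_u$ by Example \ref{ex2.3}(ii), so it becomes $\sum_{u=m}^{n+m}k^\alpha(n+m-u)e_u-\sum_{u=0}^{n-1}k^\alpha(n+m-u)e_u$; on the left-hand side I would compute $W^\alpha(h_n^\alpha*h_m^\alpha)$ from Lemma \ref{LemmaTech} with $W^\alpha h_n^\alpha=e_n$ and $W^\alpha h_m^\alpha=e_m$, so each inner double sum collapses to the single term $k^\alpha(n+m-N)$ supported on an explicit range of the free index $N$.

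For (ii)$\Rightarrow$(i), I would specialize \eqref{eq4.2} to $n=1$ and arbitrary $m\in\N_0$. Using $k^\alpha(0)=1$, $k^\alpha(1)=\alpha$ and $T_0=I$, the two sums collapse to $T_1T_m=\alpha T_m+T_{m+1}-k^\alpha(m+1)I$, that is,
\[
T_{m+1}=(T_1-\alpha I)T_m+k^{\alpha}(m+1)I,\qquad m\in\N_0.
\]
Setting $T:=T_1-\alpha I$ (which is forced by (i), since taking $n=1$ there gives $T_1=\Delta^{-\alpha}\mathcal{T}(1)=\alpha I+T$), this is a recursion with $T_0=I$ whose unique solution I would identify by induction: assuming $T_m=\sum_{j=0}^m k^\alpha(m-j)T^j$, a reindexing yields $T T_m+k^\alpha(m+1)I=\sum_{j=0}^{m+1}k^\alpha(m+1-j)T^j=\Delta^{-\alpha}\mathcal{T}(m+1)$, closing the induction.

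The main obstacle, and the only genuinely delicate point, is the index bookkeeping in the scalar identity of the first implication: one must check that the support ranges produced by Lemma \ref{LemmaTech} (the term $k^\alpha(n+m-N)$ present for $\max(n,m)\le N\le n+m$ and subtracted for $N<\min(n,m)$) coincide with those coming from the right-hand side (present for $m\le N\le n+m$ and subtracted for $N\le n-1$); this reduces to the elementary case distinction $n\ge m$ versus $n<m$, and everything else is routine. A secondary point worth stating explicitly is that the operator $T$ in (i) is not free data but is recovered from the sequence as $T=T_1-\alpha I$; this is the exact analogue, for Ces\`aro sums of order $\alpha$, of the way the classical semigroup law $T_nT_m=T_{n+m}$ identifies the generator as $T=T_1$.
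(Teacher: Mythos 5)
Your proposal is correct, but your forward implication travels a genuinely different route from the paper's. For (i)$\Rightarrow$(ii) the paper simply expands $T_nT_m=\sum_{j=0}^n\sum_{i=0}^mk^\alpha(n-j)k^\alpha(m-i)T^{j+i}$ and reindexes (two applications of Fubini) until the right-hand side of \eqref{eq4.2} appears: self-contained but index-heavy. You instead observe that $f\mapsto\sum_jf(j)T^j$ is a convolution-algebra homomorphism on $c_{0,0}$ sending $h_n^\alpha$ to $T_n$, so that \eqref{eq4.2} is the image of the universal scalar identity $h_n^\alpha*h_m^\alpha=\sum_{u=m}^{n+m}k^\alpha(n+m-u)h_u^\alpha-\sum_{u=0}^{n-1}k^\alpha(n+m-u)h_u^\alpha$, which you verify by applying $W^\alpha$ (injective on $c_{0,0}$ by Proposition \ref{WeylSumProp}(iii)) together with Lemma \ref{LemmaTech}; your bookkeeping of the supports (the term $k^\alpha(n+m-N)$ appearing for $\max(n,m)\le N\le n+m$ and subtracted for $N<\min(n,m)$, against $m\le N\le n+m$ and $N\le n-1$ on the other side, split into the cases $n\ge m$ and $n<m$) is exactly right, and the two sides do agree. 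What your route buys is conceptual: the functional equation is exposed as the specialization to $\mathcal{B}(X)$ of the model case $T=e_1$ in $c_{0,0}$, an observation the paper only makes afterwards (Example \ref{canoni}, Remark \ref{generador}), and the combinatorics is delegated to Lemma \ref{LemmaTech}, the same tool the paper uses for Theorems \ref{th3.1} and \ref{homomorphism}. The price is that Lemma \ref{LemmaTech} is stated in the paper without proof (quoted from the integer case of \cite{Gale}), whereas the paper's direct computation here is self-contained. Your converse direction is essentially the paper's: both recover $T$ as $T_1-\alpha I$, extract from \eqref{eq4.2} with $n=1$ the recursion $T_{m+1}=TT_m+k^{\alpha}(m+1)I$, and close by induction; the paper phrases the induction as a comparison with $S_n=\Delta^{-\alpha}\mathcal{T}(n)$, which amounts to the same computation.
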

\begin{proof}
Assume (i). It is clear
$T_0 = I$ and we claim the identity \eqref{eq4.2}.
Take $n\geq 1,\, m\geq 0,$ then \begin{eqnarray*}
    T_{n}T_m&=&\displaystyle\sum_{j=0}^n \displaystyle\sum_{i=0}^mk^{\alpha}(n-j)k^{\alpha}(m-i)T^{j+i}
    =\displaystyle\sum_{j=0}^n \displaystyle\sum_{u=j}^{m+j}k^{\alpha}(n-j)k^{\alpha}(m+j-u)T^{u} \\
    &=&\displaystyle\sum_{j=0}^n \displaystyle\sum_{u=0}^{m+j}k^{\alpha}(n-j)k^{\alpha}(m+j-u)T^{u}
-\displaystyle\sum_{j=1}^n \displaystyle\sum_{u=0}^{j-1}k^{\alpha}(n-j)k^{\alpha}(m+j-u)T^{u} \\
    &=&\displaystyle\sum_{j=0}^n k^{\alpha}(n-j)T_{m+j} -\displaystyle\sum_{j=1}^n \displaystyle\sum_{u=0}^{j-1}k^{\alpha}(n-j)k^{\alpha}(m+j-u)T^{u}.
    \end{eqnarray*}
    Observe that \begin{eqnarray*}
    &\quad&\displaystyle\sum_{j=1}^n\displaystyle\sum_{u=0}^{j-1}k^{\alpha}(n-j)k^{\alpha}(m+j-u)T^{u}=\displaystyle\sum_{u=0}^{n-1}\displaystyle\sum_{j=u+1}^n k^{\alpha}(n-j)k^{\alpha}(m+j-u)T^{u} \\
    &\quad&=\displaystyle\sum_{u=0}^{n-1}\displaystyle\sum_{l=u}^{n-1} k^{\alpha}(l-u)k^{\alpha}(m+n-l)T^{u}= \displaystyle\sum_{l=0}^{n-1}k^{\alpha}(m+n-l)\displaystyle\sum_{u=0}^{l} k^{\alpha}(l-u)T^{u} \\
    &\quad&=\displaystyle\sum_{l=0}^{n-1}k^{\alpha}(m+n-l)T_l. \\
    \end{eqnarray*}
    and the equality  \eqref{eq4.2} is proven.    Conversely, assume (ii). Define $T:= T_1 -\alpha I$ and
\begin{equation*}
S_n :=\sum_{j=0}^n k^{\alpha}(n-j)T^j, \quad n \in \mathbb{N}_0.
\end{equation*}
 It is clear that $S_0 = I = T_0,$ and
$
S_1= \alpha I + T = T_1 .
$
Inductively, we suppose that $S_n=T_n.$ Then using that $S_n$ satisfies \eqref{eq4.2}, we have that $$S_{n+1}+k^{\alpha}(1)S_n-k^{\alpha}(n+1)I=S_nS_1=T_nT_1=T_{n+1}+k^{\alpha}(1)S_n-k^{\alpha}(n+1)I.$$  Then we conclude that $T_{n+1}=S_{n+1},$ and consequently $T_{n}= \Delta^{-\alpha} \mathcal{T}(n)$ for all $n \in \mathbb{N}_0.$
\end{proof}

\begin{remark}\label{generador}{\rm Given  $\{ T_n \}_{n\in\N_0}\subset  \mathcal{B}(X)$ a sequence of bounded operators which verify the equality (\ref{eq4.2}).  Then the operator defined by $T:=T_1-\alpha I$ is called the generator of $\{ T_n \}_{n\in\N_0}.$  By Theorem \ref{TheoremEcFunc}, $T_{n}= \Delta^{-\alpha} \mathcal{T}(n)$ where $\mathcal{T}(n)=T^n$ for $n\in \N_0$. In particular, note that $\{h_n^{\alpha}\}_{n\in\N_0}$ satisfies \eqref{eq4.2} in $\tau^{\alpha}(\phi),$ see Example \ref{canoni}, and the generator is the element $e_1$. }
\end{remark}

The following is one of the main results of this paper.

\begin{theorem}\label{homomorphism} Let $\alpha> 0$ and  $T \in \mathcal{B}(X)$ such that  $\Vert \Delta^{-\alpha} \mathcal{T}(n)\Vert \le C\phi(n)$  for $n\in \N_0$ with $\phi\in \omega_{\alpha, loc}$ and $C>0$. Then there exists a bounded algebra homomorphism $\theta:\tau^{\alpha}(\phi)\to \mathcal{B}(X)$ given by $$\theta(f)x:=\displaystyle\sum_{n=0}^{\infty}W^{\alpha}f(n)\Delta^{-\alpha} \mathcal{T}(n)x,\qquad x\in X,\quad f\in \tau^{\alpha}(\phi).$$

 Furthermore,  the following identities  hold.

 \begin{itemize}
 \item[(i)] For $n \in \N_0$, $\theta(h_n^\alpha)= \Delta^{-\alpha} \mathcal{T}(n)$, in particular $\theta (e_0)=I$ and $\theta(e_1)=T$.
 \item[(ii)] For  $f\in \tau^{\alpha}(\phi)$ such that $\Delta f\in \tau^{\alpha}(\phi)$ and $x\in X$,   $ T \theta(\Delta f)x= (I-T)\theta(f)x-f(0)x.$
 \item[(iii)] In the case that $\displaystyle\sup_{n\in \N_0}{(k^{\beta-\alpha}\ast \phi)(n)\over  \psi(n)}<\infty$, for $0<\alpha<\beta$ and $\psi\in\omega_{\beta, loc}$, then $\tau^{\beta}(\psi)\hookrightarrow \tau^{\alpha}(\phi)$ and
 $$
 \theta(f)x= \displaystyle\sum_{n=0}^{\infty}W^{\beta}f(n)\Delta^{-\beta} \mathcal{T}(n)x,\qquad x\in X,\quad f\in \tau^{\beta}(\psi).
 $$
 \item[(iv)] If $\Vert T\Vert \le a$ for some $a>0$, then
 $ \theta(f)x=\sum_{n=0}^{\infty}f(n)T^n (x),$ for $f\in  \tau^{\alpha}(\phi)\cap \ell^1_{a^n}, $
in particular $\theta(p_\lambda)= (\lambda-T)^{-1}$ for $\vert \lambda\vert>a$.

 \end{itemize}
\end{theorem}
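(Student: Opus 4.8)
The plan is to prove, in this order, that $\theta$ is well defined and bounded, that it is multiplicative, and then the four displayed identities; the multiplicativity is the part that carries the real technical weight. Throughout I write $T_n:=\Delta^{-\alpha}\mathcal{T}(n)$.

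First I would dispose of boundedness. Since $q_\phi(f)=\sum_n\phi(n)|W^{\alpha}f(n)|$, the operator $W^{\alpha}$ identifies $\tau^{\alpha}(\phi)$ isometrically with $\ell^1_\phi$, so for $f\in\tau^{\alpha}(\phi)$ the defining series converges absolutely in $\mathcal{B}(X)$ and
\[
\|\theta(f)\|\le\sum_{n=0}^{\infty}|W^{\alpha}f(n)|\,\|\Delta^{-\alpha}\mathcal{T}(n)\|\le C\sum_{n=0}^{\infty}\phi(n)|W^{\alpha}f(n)|=C\,q_\phi(f).
\]
Hence $\theta$ is a bounded linear map with $\|\theta\|\le C$, and because $c_{0,0}$ is dense in $\tau^{\alpha}(\phi)$ it suffices to verify every algebraic identity on $c_{0,0}$ and pass to the limit by continuity of $\theta$ and of multiplication in $\mathcal{B}(X)$.

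For multiplicativity, note first that each $T_n$ is a polynomial in $T$, so the family $(T_n)$ is commuting. For $f,g\in c_{0,0}$ (all sums finite, so rearrangements are free) I would expand $\theta(f\ast g)=\sum_n W^{\alpha}(f\ast g)(n)T_n$ with Lemma \ref{LemmaTech} and collect, for each pair $(p,j)$, the coefficient of $W^{\alpha}f(p)\,W^{\alpha}g(j)$; bookkeeping the summation ranges turns this coefficient into
\[
\sum_{u=\max(p,j)}^{p+j}k^{\alpha}(p+j-u)T_u-\sum_{u=0}^{\min(p,j)-1}k^{\alpha}(p+j-u)T_u.
\]
Comparing with the functional equation \eqref{eq4.2} of Theorem \ref{TheoremEcFunc} applied with $n=\min(p,j)$, $m=\max(p,j)$, and using $T_pT_j=T_{\min(p,j)}T_{\max(p,j)}$, this coefficient is exactly $T_pT_j$ (the degenerate cases $p=0$ or $j=0$ being immediate from $T_0=I$). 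Consequently $\theta(f\ast g)=\sum_{p,j}W^{\alpha}f(p)W^{\alpha}g(j)T_pT_j=\theta(f)\theta(g)$. I expect this index matching to be the main obstacle: Lemma \ref{LemmaTech} and \eqref{eq4.2} are not symmetric in $(p,j)$, and it is precisely the commutativity of the $T_n$ that reconciles the two forms, which is why choosing the smaller index as the role of $n$ in \eqref{eq4.2} is the decisive move.

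For (i), Example \ref{ex2.3}(ii) gives $W^{\alpha}h_n^{\alpha}=h_n^{0}=e_n$, so $\theta(h_n^{\alpha})=T_n=\Delta^{-\alpha}\mathcal{T}(n)$; then $\theta(e_0)=\theta(h_0^{\alpha})=T_0=I$, and from $h_1^{\alpha}=\alpha e_0+e_1$ together with $T_1=\alpha I+T$ one gets $\theta(e_1)=T$. For (ii) I would observe that $\Delta f+f$ is the left shift $(Lf)(n)=f(n+1)$ and that $e_1\ast(Lf)=f-f(0)e_0$ by a one-line computation; multiplicativity and $\theta(e_1)=T$ then yield $T\theta(Lf)=\theta(e_1\ast Lf)=\theta(f)-f(0)I$, which rearranges to $T\theta(\Delta f)=(I-T)\theta(f)-f(0)I$, the element $Lf=\Delta f+f$ lying in $\tau^{\alpha}(\phi)$ by hypothesis. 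For (iii), writing $W^{\alpha}f=W^{-(\beta-\alpha)}W^{\beta}f$ and applying Tonelli gives $q_\phi(f)\le\big(\sup_n(k^{\beta-\alpha}\ast\phi)(n)/\psi(n)\big)\,q_\psi(f)$, hence $\tau^{\beta}(\psi)\hookrightarrow\tau^{\alpha}(\phi)$; then expanding $\Delta^{-\beta}\mathcal{T}(n)=\sum_{m\le n}k^{\beta-\alpha}(n-m)\Delta^{-\alpha}\mathcal{T}(m)$ and interchanging the absolutely convergent sums via $\sum_{n\ge m}k^{\beta-\alpha}(n-m)W^{\beta}f(n)=W^{-(\beta-\alpha)}W^{\beta}f(m)=W^{\alpha}f(m)$ (Proposition \ref{WeylSumProp}(v)) recovers the original series. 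Finally, for (iv) and $f\in c_{0,0}$, interchanging finite sums and using $W^{-\alpha}W^{\alpha}f=f$ (Proposition \ref{WeylSumProp}(iii)) gives $\theta(f)=\sum_n f(n)T^n$; to reach general $f\in\tau^{\alpha}(\phi)\cap\ell^1_{a^n}$ I would approximate $f$ by the finite-support sequences obtained by truncating $W^{\alpha}f$ to $[0,N]$ and applying $W^{-\alpha}$, which converge to $f$ in $q_\phi$ and also in $\|\cdot\|_{\ell^1_{a^n}}$ (using that $W^{\alpha}$ preserves $\ell^1_{a^n}$ when $a>1$, and the embedding $\tau^{\alpha}(\phi)\hookrightarrow\ell^1\subset\ell^1_{a^n}$ when $a\le1$), so the two continuous maps $\theta$ and $f\mapsto\sum_n f(n)T^n$ agree in the limit. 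Specializing to $f=p_\lambda$ and summing the Neumann series $\sum_n\lambda^{-(n+1)}T^n=(\lambda-T)^{-1}$ then gives $\theta(p_\lambda)=(\lambda-T)^{-1}$ for $|\lambda|>a$; securing the simultaneous convergence of the approximants in both norms is the secondary subtlety of this last part.
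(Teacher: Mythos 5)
Your proposal is correct, and its technical core coincides with the paper's own argument: boundedness is the same one-line estimate, and your multiplicativity proof (expand $W^{\alpha}(f\ast g)$ via Lemma \ref{LemmaTech}, rearrange, and recognize the coefficient of $W^{\alpha}f(p)W^{\alpha}g(j)$ as the right-hand side of the functional equation \eqref{eq4.2} applied with $n=\min(p,j)$, $m=\max(p,j)$, using that the $T_n$ commute) is exactly the computation in the paper, which organizes the same four summation regions by Fubini and writes both resulting blocks as $\Delta^{-\alpha}\mathcal{T}(p)\Delta^{-\alpha}\mathcal{T}(j)$. Parts (i) and (iii) also match the paper (the paper proves (iii) by citing Proposition \ref{WeylSumProp}(v) and Remark \ref{Duality}, which is your Tonelli interchange made explicit). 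Where you genuinely diverge is in (ii) and (iv). For (ii) the paper does a direct series manipulation using the identities $T\Delta^{-\alpha}\mathcal{T}(n)=\Delta^{-\alpha}\mathcal{T}(n+1)-k^{\alpha}(n+1)I$ and $\sum_n W^{\alpha}f(n)k^{\alpha}(n)=f(0)$, whereas you deduce it formally from multiplicativity and (i) via $e_1\ast(\Delta f+f)=f-f(0)e_0$; your route is shorter and exposes (ii) as an algebraic consequence of being a homomorphism with $\theta(e_1)=T$, at the modest cost of having to know that this convolution identity persists in the completion. For (iv) the paper simply invokes Remark \ref{Duality} (i.e.\ a Fubini interchange), while you approximate by truncations $W^{-\alpha}(\mathbf{1}_{[0,N]}W^{\alpha}f)$ converging in both norms; this is more careful about convergence than the paper's one-liner, but note that your $a\le 1$ branch uses the embedding $\tau^{\alpha}(\phi)\hookrightarrow\ell^1$, which Theorem \ref{th3.1}(ii) establishes only for $\phi\in\omega_{\alpha}$, not for general $\phi\in\omega_{\alpha,loc}$ as in the hypothesis here; in that regime one should instead justify the interchange directly from $k^{\alpha}(n)\le c_{\phi}\phi(n)$ (Proposition \ref{pieces}(iii)), which bounds $\sum_{j\le n}k^{\alpha}(n-j)a^{j}$ by a constant times $\phi(n)$ when $a<1$. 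This is a fixable edge case rather than a structural flaw, and the paper's own proof of (iv) is silent on the same issue.
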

\begin{proof} Note that the map $\theta$ is well-defined, lineal and continuous, $ \Vert \theta(f)x\rVert\leq C q_{\alpha}(f)\lVert x\rVert,$ for $f\in \tau^{\alpha}(\phi)$ and $x\in X.$ To see that $\theta$ is algebra homomorphism is sufficient to prove that $\theta(f*g)=\theta(f)\theta(g)$ for $f,g\in c_{0,0}.$ By Lemma \ref{LemmaTech},
 we get that\begin{eqnarray*}
\theta(f*g)x&=&\displaystyle\sum_{n=0}^{\infty}W^{\alpha}(f*g)(n)\Delta^{-\alpha}\mathcal{T}(n)x \\ &= & \displaystyle\sum_{n=0}^{\infty}\displaystyle\sum_{j=0}^n W^{\alpha}g(j)\displaystyle\sum_{p=n-j}^n k^{\alpha}(p-n+j)W^{\alpha}f(p) \Delta^{-\alpha}\mathcal{T}(n)x \\
&&-\displaystyle\sum_{n=0}^{\infty}\displaystyle\sum_{j=n+1}^{\infty} W^{\alpha}g(j)\displaystyle\sum_{p=n+1}^{\infty} k^{\alpha}(p-n+j)W^{\alpha}f(p)\Delta^{-\alpha}\mathcal{T}(n)x.
\end{eqnarray*}
We apply Fubini theorem to get that \begin{eqnarray*}
\theta(f*g)x&=&\displaystyle\sum_{j=0}^{\infty}W^{\alpha}g(j)\displaystyle\sum_{p=0}^j W^{\alpha}f(p)\displaystyle\sum_{n=j}^{p+j} k^{\alpha}(p-n+j)\Delta^{-\alpha}\mathcal{T}(n)x \\
&&+\displaystyle\sum_{j=0}^{\infty}W^{\alpha}g(j)\displaystyle\sum_{p=j+1}^{\infty} W^{\alpha}f(p)\displaystyle\sum_{n=p}^{p+j} k^{\alpha}(p-n+j)\Delta^{-\alpha}\mathcal{T}(n)x \\
&&-\displaystyle\sum_{j=1}^{\infty}W^{\alpha}g(j)\displaystyle\sum_{p=1}^j W^{\alpha}f(p)\displaystyle\sum_{n=0}^{p-1} k^{\alpha}(p-n+j)\Delta^{-\alpha}\mathcal{T}(n)x \\
&&-\displaystyle\sum_{j=1}^{\infty}W^{\alpha}g(j)\displaystyle\sum_{p=j+1}^{\infty} W^{\alpha}f(p)\displaystyle\sum_{n=0}^{j-1} k^{\alpha}(p-n+j)\Delta^{-\alpha}\mathcal{T}(n)x \\
&=&\displaystyle\sum_{j=1}^{\infty}W^{\alpha}g(j)\displaystyle\sum_{p=1}^j W^{\alpha}f(p)\biggl(\displaystyle\sum_{n=j}^{p+j}-\displaystyle\sum_{n=0}^{p-1}\biggr) k^{\alpha}(p-n+j)\Delta^{-\alpha}\mathcal{T}(n)x+W^{\alpha}g(0)W^{\alpha}f(0)x \\
&&+\displaystyle\sum_{j=0}^{\infty}W^{\alpha}g(j)\displaystyle\sum_{p=j+1}^{\infty} W^{\alpha}f(p)\biggl(\displaystyle\sum_{n=p}^{p+j}-\displaystyle\sum_{n=0}^{j-1}\biggr) k^{\alpha}(p-n+j)\Delta^{-\alpha}\mathcal{T}(n)x \\
&=&\displaystyle\sum_{j=1}^{\infty}W^{\alpha}g(j)\displaystyle\sum_{p=1}^j W^{\alpha}f(p)\Delta^{-\alpha}\mathcal{T}(p)\Delta^{-\alpha}\mathcal{T}(j)x+W^{\alpha}g(0)W^{\alpha}f(0)x \\
&&+\displaystyle\sum_{j=0}^{\infty}W^{\alpha}g(j)\displaystyle\sum_{p=j+1}^{\infty} W^{\alpha}f(p)\Delta^{-\alpha}\mathcal{T}(p)\Delta^{-\alpha}\mathcal{T}(j)x =\theta(f)\theta(g)x.
\end{eqnarray*}
where we have used the identity (\ref{eq4.2}).

(i) Note that $W^\alpha h^\alpha_n=e_n$, see Example \ref{ex2.3} (ii), and then $\theta(h_n^\alpha)= \Delta^{-\alpha} \mathcal{T}(n)$ for $n\in \N_0$. As $e_0=h_0$ and $e_1=h_1^\alpha-\alpha h_0^\alpha$,   it is clear that $\theta(e_0)=I$ and $\theta(e_1)=  T$. (ii) Now, for $f \in \tau^\alpha(\phi)$ such that $\Delta f\in \tau^\alpha(\phi) $ and $x\in X$, we have that \begin{eqnarray*}
T\theta(\Delta f)x&=& T\biggl(\displaystyle\sum_{n=0}^{\infty}W^{\alpha}f(n+1)\Delta^{-\alpha}\mathcal{T}(n)x- \displaystyle\sum_{n=0}^{\infty}W^{\alpha}f(n)\Delta^{-\alpha}\mathcal{T}(n)x\biggr)\\
&=&\displaystyle\sum_{n=0}^{\infty}W^{\alpha}f(n+1)\left(\Delta^{-\alpha}\mathcal{T}(n+1)x-k^{\alpha}(n+1)x\right)-T\displaystyle\sum_{n=0}^{\infty}W^{\alpha}f(n)\Delta^{-\alpha}\mathcal{T}(n)x \\
&=&(I-T)\theta(f)x-W^{\alpha}f(0)\Delta^{-\alpha}\mathcal{T}(0)x-\displaystyle\sum_{n=0}^{\infty}W^{\alpha}f(n+1)k^{\alpha}(n+1)x \\
&=&(I-T)\theta(f)x-\displaystyle\sum_{n=0}^{\infty}W^{\alpha}f(n)k^{\alpha}(n)x=(I-T)\theta(f)x-f(0)x,
\end{eqnarray*}
where we have applied that $
T\Delta^{-\alpha}\mathcal{T}(n)
=\Delta^{-\alpha}\mathcal{T}(n+1)-k^{\alpha}(n+1)$ and
 $\displaystyle\sum_{n=0}^{\infty}W^{\alpha}f(n)k^{\alpha}(n)=f(0)$ for $f\in\tau(\phi).$ (iii)  Suppose that $\displaystyle\sup_{n\in \N_0}{(k^{\beta-\alpha}\ast \phi)(n)\over  \psi(n)}<\infty$, with $0<\alpha<\beta$ and $\psi\in\omega_{\beta, loc}$, then  it is straightforward to check that $\tau^{\beta}(\psi)\hookrightarrow \tau^{\alpha}(\phi)$ and $$\displaystyle\sum_{n=0}^{\infty}W^{\alpha}f(n)\Delta^{-\alpha}\mathcal{T}(n)x=\displaystyle\sum_{n=0}^{\infty}W^{\beta}f(n)\Delta^{-\beta}\mathcal{T}(n)x,\qquad f\in \tau^{\beta}(\psi),\, x\in X,$$
where we have applied Proposition \ref{SemPro} (v) and Remark \ref{Duality}. (iv) Now take $a>0$ such that $\Vert T\Vert \le a$ and then $\sigma(T)\subset \{ z\in \C\,\,\vert\,\,\vert z\vert \le a\}$. For $f\in  \tau^{\alpha}(\phi)\cap \ell^1_{a^n}, $ we apply Remark  \ref{Duality} to get $$ \theta(f)x=\sum_{n=0}^{\infty}f(n)T^n (x), \qquad x\in X.
$$
In particular $p_\lambda \in \tau^{\alpha}(\phi)\cap \ell({a^n})$ for $\vert \lambda \vert >a$ and  $\theta(p_\lambda)x=\displaystyle{{1\over \lambda}\sum_{n=0}^\infty{T^n\over \lambda^n}x}=(\lambda-T)^{-1}x$ for $x\in X$.
\end{proof}

\begin{corollary}\label{reci} Let $\alpha>0$, $\phi \in \omega_\alpha$ and $\theta: \tau^{\alpha}(\phi)\to {\mathcal B}(X)$ be an algebra homomorphism. Then there exists $T\in {\mathcal B}(X)$ such that
$$
\theta(f)x=\sum_{n=0}^{\infty}W^{\alpha}f(n)\Delta^{-\alpha}\mathcal{T}(n)x, \qquad f\in \tau^{\alpha}(\phi), \quad x\in X;
$$
in particular $\theta(h_n^\alpha)=\Delta^{-\alpha}\mathcal{T}(n)$ for $n\in \N_0$ and $\theta(p_\lambda)=(\lambda-T)^{-1}$ for $\vert \lambda\vert > \Vert T\Vert.$
\end{corollary}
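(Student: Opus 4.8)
The plan is to reconstruct the generating operator from $\theta$ and then recover the explicit formula, first on the dense subspace $c_{0,0}$ by a purely algebraic computation and afterwards on all of $\tau^{\alpha}(\phi)$ by continuity. First I would set $T:=\theta(e_1)\in\mathcal{B}(X)$. Since $e_0$ is the unit of $\tau^{\alpha}(\phi)$ we have $\theta(e_0)=I$, and as $e_l=e_1^{\ast l}$ and $\theta$ is multiplicative, $\theta(e_l)=\theta(e_1)^l=T^l$ for every $l\in\N_0$. Using the expansion (\ref{cesarocano}), which writes $h_n^{\alpha}=\sum_{l=0}^n k^{\alpha}(n-l)e_1^{\ast l}$, linearity then yields
$$
\theta(h_n^{\alpha})=\sum_{l=0}^n k^{\alpha}(n-l)T^l=\Delta^{-\alpha}\mathcal{T}(n),\qquad n\in\N_0,
$$
recovering in particular $\theta(e_0)=I$ and $\theta(e_1)=T$ as in Theorem \ref{homomorphism}(i).

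Next I would run the algebraic step on finite sequences. For $f\in c_{0,0}$ the sequence $W^{\alpha}f$ again lies in $c_{0,0}$, and by Proposition \ref{WeylSumProp}(iii) the identity $W^{-\alpha}W^{\alpha}f=f$ reads, coordinatewise, $f=\sum_{n=0}^{\infty}W^{\alpha}f(n)\,h_n^{\alpha}$, which is a finite sum. Applying the linear map $\theta$ to this finite combination and inserting the value of $\theta(h_n^{\alpha})$ computed above gives
$$
\theta(f)x=\sum_{n=0}^{\infty}W^{\alpha}f(n)\,\Delta^{-\alpha}\mathcal{T}(n)x,\qquad x\in X,\quad f\in c_{0,0},
$$
with no convergence issues since only finitely many terms are nonzero.

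To pass to the completion I would use the boundedness of $\theta$. From $\theta(h_n^{\alpha})=\Delta^{-\alpha}\mathcal{T}(n)$ together with $q_{\phi}(h_n^{\alpha})=\phi(n)$ (Example \ref{NormEquiv}) one obtains the growth estimate $\|\Delta^{-\alpha}\mathcal{T}(n)\|\le\|\theta\|\,\phi(n)$, which places $T$ under the hypotheses of Theorem \ref{homomorphism} and, more importantly here, shows that
$$
\sum_{n=0}^{\infty}|W^{\alpha}f(n)|\,\|\Delta^{-\alpha}\mathcal{T}(n)\|\le\|\theta\|\,q_{\phi}(f)<\infty,\qquad f\in\tau^{\alpha}(\phi).
$$
Thus the right-hand side defines a bounded operator-valued map on $\tau^{\alpha}(\phi)$ that coincides with $\theta$ on the dense subspace $c_{0,0}$; by continuity of both maps the formula extends to all $f\in\tau^{\alpha}(\phi)$.

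Finally, for the resolvent identity I would exploit the pseudo-resolvent relation $p_{\lambda}\ast(\lambda e_0-e_1)=e_0$ of Example \ref{ex2.3}(i): for $|\lambda|$ large enough that $p_{\lambda}\in\tau^{\alpha}(\phi)$ (Example \ref{NormEquiv}), applying the homomorphism $\theta$ gives $\theta(p_{\lambda})(\lambda I-T)=I=(\lambda I-T)\theta(p_{\lambda})$, whence $\theta(p_{\lambda})=(\lambda-T)^{-1}$, which in particular holds for $|\lambda|>\|T\|$. I expect the only delicate point to be this interchange/extension: everything hinges on the boundedness of $\theta$ to guarantee the growth bound $\|\Delta^{-\alpha}\mathcal{T}(n)\|\le\|\theta\|\,\phi(n)$ and the absolute convergence above, so I would make sure to invoke (or, if it is not explicitly part of the hypotheses, first establish via the closed graph theorem and the coordinatewise control $|f(m)|\le c_{\phi}\,q_{\phi}(f)$ of Example \ref{NormEquiv}) the continuity of $\theta$ before carrying out the density argument.
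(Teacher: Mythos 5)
Your argument is correct, and for the central formula it is essentially the paper's own proof: you set $T:=\theta(e_1)$, deduce $\theta(h_n^\alpha)=\Delta^{-\alpha}\mathcal{T}(n)$ from the expansion (\ref{cesarocano}), and recover $\theta(f)x=\sum_n W^\alpha f(n)\Delta^{-\alpha}\mathcal{T}(n)x$ from the decomposition $f=\sum_n W^\alpha f(n)h_n^\alpha$ of Example \ref{NormEquiv}; your only variation there is organizational (you prove the identity on $c_{0,0}$, where the sum is finite, and then extend by density using $\Vert\Delta^{-\alpha}\mathcal{T}(n)\Vert\le\Vert\theta\Vert\,\phi(n)$, whereas the paper applies the continuous $\theta$ directly to the series convergent in $\tau^{\alpha}(\phi)$). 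Where you genuinely depart from the paper is the resolvent identity: the paper invokes Theorem \ref{homomorphism}(iv), i.e.\ the duality/Neumann-series computation $\theta(p_\lambda)x=\lambda^{-1}\sum_n\lambda^{-n}T^nx$, which requires $\vert\lambda\vert>a\ge\Vert T\Vert$, while you apply $\theta$ to the convolution identity $p_\lambda\ast(\lambda e_0-e_1)=e_0$ of Example \ref{ex2.3}(i) and use multiplicativity and commutativity to get $\theta(p_\lambda)(\lambda I-T)=(\lambda I-T)\theta(p_\lambda)=I$. Your route is more elementary and slightly stronger: it yields invertibility of $\lambda-T$ together with the formula for \emph{every} $\lambda$ with $p_\lambda\in\tau^{\alpha}(\phi)$, with no comparison between $\vert\lambda\vert$ and $\Vert T\Vert$. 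Note that both proofs (and the statement itself, whose phrase ``for $\vert\lambda\vert>\Vert T\Vert$'' must be read with the implicit restriction $p_\lambda\in\tau^{\alpha}(\phi)$) really give the identity for $\vert\lambda\vert$ larger than the exponential type $a$ of $\phi$ (Proposition \ref{pieces}(iii), Example \ref{NormEquiv}), and $a$ may exceed $\Vert T\Vert$.

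One caveat on your closing remark. Like the paper --- whose proof explicitly ``applies the continuity of $\theta$'' even though the statement omits the word ``bounded'' --- you need boundedness of $\theta$, and your primary suggestion, to read it as a hypothesis (consistently with Theorem \ref{homomorphism} and Corollary \ref{cor5.7}), is the right one. The fallback you sketch via the closed graph theorem would not go through: to show the graph of $\theta$ is closed you would have to relate operator-norm limits of $\theta(f_k)$ to coordinatewise limits of $f_k$, which is precisely the representation formula you are in the course of proving, and no automatic-continuity principle is available for a not-necessarily-surjective homomorphism into $\mathcal{B}(X)$.
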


\begin{proof} Take $T:=\theta(e_1)$. Note that $e_1=h_1^\alpha-\alpha h_0^\alpha$, see Example \ref{ex2.3} (ii), and
$h_n^\alpha =\Delta^{-\alpha} \mathcal{E}(n)$ for $n\in \N_0$ where $ \mathcal{E}(n)=e^{\ast n}_1$, see Example \ref{canoni}.
By Example \ref{NormEquiv}, $f=\displaystyle\sum_{j=0}^\infty W^\alpha f(n) h_n^\alpha$ for $f\in  \tau^{\alpha}(\phi)$, we apply the continuity of $\theta$ to get
\begin{eqnarray*}
\theta(h_n^\alpha)x&=& \sum_{j=0}^nk^\alpha(n-j)\left(\theta(e_1)\right)^jx=\Delta^{-\alpha}\mathcal{T}(n)x;\cr
\theta(f)x&=&\sum_{n=0}^\infty W^\alpha f(n)\theta( h_n^\alpha)x=\sum_{n=0}^{\infty}W^{\alpha}f(n)\Delta^{-\alpha}\mathcal{T}(n)x,
\end{eqnarray*}
for $x\in X$. By Theorem \ref{homomorphism} (iv), we conclude the proof.
\end{proof}

By Theorem \ref {homomorphism} and Corollary \ref{reci}, we obtain the following characterizations of  $(C, \alpha)$-bounded  and power-bounded operators.

\begin{corollary}\label{cor5.7} Let $T \in \mathcal{B}(X)$  and $\alpha > 0$ be given. The following assertions are equivalent:
\begin{itemize}
\item[(i)] $T$ is $(C, \alpha)$-bounded operator.
\item[(ii)] There exists a bounded algebra homomorphism $\theta : \tau^{\alpha}(k^{\alpha+1}) \to \mathcal{B}(X)$ such that $\theta(e_1)=T.$
\end{itemize}
In the limit case, the following assertions are equivalent:
\begin{itemize}
\item[(a)] $T$ is power bounded.
\item[(b)] There exists a bounded algebra homomorphism $\theta : \ell^1 \to \mathcal{B}(X)$ such that $\theta(e_1)= T.$
    \item[(c)] For any $0<\alpha<1$, there exist bounded algebra homomorphisms $\theta_\alpha : \tau^{\alpha}(k^{\alpha+1}) \to \mathcal{B}(X)$ such that $\theta_\alpha(e_1)=T$ and
        $\displaystyle{
        \sup_{0<\alpha<1}\Vert \theta_\alpha\Vert <\infty.}
        $
\end{itemize}
\end{corollary}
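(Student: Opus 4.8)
The plan is to treat the two equivalences separately, using Theorem \ref{homomorphism} for the ``forward'' implications and Corollary \ref{reci} for the ``backward'' ones, after first recording that the weight $\phi=k^{\alpha+1}$ lies in $\omega_\alpha$. Indeed, $k^{\alpha+1}$ is nondecreasing (as $\alpha+1>1$), is of exponential type, satisfies $\inf_n (k^{\alpha+1}(n))^{-1}k^{\alpha+1}(n)=1>0$, and belongs to $\omega_{\alpha,loc}$ by Proposition \ref{pieces}(vi); hence both Theorem \ref{homomorphism} and Corollary \ref{reci} apply with this weight. For the first equivalence, $(i)\Rightarrow(ii)$ is immediate: $(C,\alpha)$-boundedness means exactly $\|\Delta^{-\alpha}\mathcal{T}(n)\|\le C\,k^{\alpha+1}(n)$, so Theorem \ref{homomorphism} (with $\phi=k^{\alpha+1}$) produces the bounded homomorphism with $\theta(e_1)=T$. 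For $(ii)\Rightarrow(i)$ I would invoke Corollary \ref{reci} to obtain $\theta(h_n^\alpha)=\Delta^{-\alpha}\mathcal{T}(n)$, and then combine with the norm identity $q_\alpha(h_n^\alpha)=k^{\alpha+1}(n)$ from Example \ref{NormEquiv} to get $\|\Delta^{-\alpha}\mathcal{T}(n)\|=\|\theta(h_n^\alpha)\|\le\|\theta\|\,k^{\alpha+1}(n)$, which is precisely $(C,\alpha)$-boundedness.

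For the limit case, $(a)\Leftrightarrow(b)$ is the classical fact: if $T$ is power bounded then $\theta(f)=\sum_n f(n)T^n$ defines a bounded homomorphism on $\ell^1$ with $\theta(e_1)=T$, and conversely $\|T^n\|=\|\theta(e_1^{\ast n})\|=\|\theta(e_n)\|\le\|\theta\|$ since $e_n=e_1^{\ast n}$ and $\|e_n\|_1=1$. For $(a)\Rightarrow(c)$ I would use the semigroup property $k^\alpha\ast k^1=k^{\alpha+1}$ together with $k^1\equiv1$: for power bounded $T$ with $\|T^n\|\le M$ one has the uniform estimate $\|\Delta^{-\alpha}\mathcal{T}(n)\|\le M\sum_{j=0}^n k^\alpha(n-j)=M\,k^{\alpha+1}(n)$, so the constant appearing in Theorem \ref{homomorphism} equals $M$, independent of $\alpha$, giving $\sup_{0<\alpha<1}\|\theta_\alpha\|\le M$.

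The hard (and genuinely new) direction is $(c)\Rightarrow(a)$, where uniformity in $\alpha$ is essential: for a single fixed $\alpha$, $(C,\alpha)$-boundedness does not force power boundedness (the Assani example is $(C,1)$-bounded but not power bounded), so the uniform control is what must be exploited. Setting $L:=\sup_{0<\alpha<1}\|\theta_\alpha\|$ and applying the $(ii)\Rightarrow(i)$ estimate to each $\theta_\alpha$ (whose common value $\theta_\alpha(e_1)=T$ guarantees the same operator $T$ throughout), I obtain $\|\Delta^{-\alpha}\mathcal{T}(n)\|\le L\,k^{\alpha+1}(n)$ for every $n$ and every $\alpha\in(0,1)$. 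The key step is then to let $\alpha\to0^+$ at each fixed $n$: since the sum is finite and $k^\alpha(m)\to e_0(m)$, one has $\Delta^{-\alpha}\mathcal{T}(n)=\sum_{j=0}^n k^\alpha(n-j)T^j\to T^n$ in $\mathcal{B}(X)$, while $k^{\alpha+1}(n)\to1$; passing to the limit yields $\|T^n\|\le L$ for all $n$, i.e.\ $T$ is power bounded. I expect the only delicate verification to be the interchange of limit and norm, which is routine here because the sum over $j$ is finite.
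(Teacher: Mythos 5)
Your proof is correct, and its preparatory parts coincide with what the paper dismisses as ``previous results'': checking $k^{\alpha+1}\in\omega_\alpha$, getting (i)$\Leftrightarrow$(ii) from Theorem \ref{homomorphism} and Corollary \ref{reci} together with $q_\alpha(h_n^\alpha)=k^{\alpha+1}(n)$, the classical (a)$\Leftrightarrow$(b), and (a)$\Rightarrow$(c) with the $\alpha$-independent constant $M$. Where you genuinely diverge is in the one implication the paper actually writes out: the paper proves (c)$\Rightarrow$(b), whereas you prove (c)$\Rightarrow$(a). The paper's route stays on the algebra side: since each $\theta_\alpha$ is an algebra homomorphism with $\theta_\alpha(e_1)=T$, all the $\theta_\alpha$ agree on $c_{0,0}$ (they send $e_n\mapsto T^n$), so one may set $\theta(f):=\theta_\alpha(f)$ there; then $\Vert\theta(f)\Vert\le C\,q_\alpha(f)$ for every $\alpha\in(0,1)$, and letting $\alpha\to 0^+$ via Theorem \ref{th3.1}(ii) (namely $q_\alpha(f)\to\Vert f\Vert_1$ on $c_{0,0}$) gives $\Vert\theta(f)\Vert\le C\Vert f\Vert_1$, whence $\theta$ extends to $\ell^1$ by density. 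You instead transfer each $\theta_\alpha$ to the operator side first, obtaining $\Vert\Delta^{-\alpha}\mathcal{T}(n)\Vert\le L\,k^{\alpha+1}(n)$ uniformly in $\alpha$, and then take $\alpha\to 0^+$ at each fixed $n$, using only the pointwise limits $k^\alpha(m)\to e_0(m)$ and $k^{\alpha+1}(n)\to 1$ over a finite sum. Both arguments exploit the same degeneration $\alpha\to 0^+$, but on opposite sides of the homomorphism: yours is the more elementary (no monotone-convergence argument for the norms $q_\alpha$, no density/extension step) and yields the explicit estimate $\sup_n\Vert T^n\Vert\le\sup_{0<\alpha<1}\Vert\theta_\alpha\Vert$; the paper's produces item (b) directly, i.e.\ the single bounded homomorphism on all of $\ell^1$ with the same norm bound, which is marginally more information than power boundedness alone, though of course the two conclusions are equivalent through (a)$\Leftrightarrow$(b).
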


\begin{proof} Due to previous results, we only have to check that (c) implies (b).  Since the map $\theta_\alpha$ is an algebra homomorphism  then $\theta_\alpha(e_n)= T^n$, $\theta_\alpha(f)$ is well defined for $f\in c_{0,0}$ and is independent of $\alpha$. Take $C>0$ such that $\sup_{0<\alpha<1}\Vert \theta_\alpha\Vert<C$.  We define $\theta(f):=\theta_\alpha(f)$ for $ f\in c_{0,0}$ and some given $\alpha \in (0,1)$.
Then $
\Vert \theta(f)\Vert=
\Vert \theta_\alpha(f)\Vert \le C \, q_\alpha(f) $ for $f\in c_{0,0}.$
 By Theorem  \ref{th3.1} (ii), we get that
$
\Vert \theta(f)\Vert \le C \Vert f\Vert_1,$  for $f\in c_{0,0}$ and we conclude the result by density.
\end{proof}

\section{The $Z$-transform and resolvent operators}
\setcounter{theorem}{0}
\setcounter{equation}{0}

Let $f:\N_0\to X$ be a scalar sequence on a Banach space $X$.  We also recall that the $Z$-transform of a given sequence $f:\mathbb{N}_0 \to X$ is defined by
\begin{equation} \label{zeta}
\tilde f(z)= \sum_{n=0}^{\infty} f(n) z^{-n},
\end{equation}
for all $z$ such that this series converges. The set of numbers $z$ in the complex plane for which series \eqref{zeta} converges
is called the region of convergence of $\tilde f$. The uniqueness of the inverse $Z$-transform may be established as follows: suppose that there are two sequences $f$, and $g$ with the same $Z$-transform,
that is,
$$ \sum_{n=0}^{\infty} f(n) z^{-n}= \sum_{n=0}^{\infty}g(n) z^{-n}, \qquad  \vert z\vert > R.$$
It follows from Laurent's theorem that $f(n)=g(n)$ for $n\in \N_0$.

Let $\phi:\N_0\to (0,\infty)$ be a  sequence such that $\phi(n)\le Ca^n$ for some $C>0$ and $a> 0$. To follow the notation given in \cite{cho}, we write $\omega=\log(a)$ and $\omega$ is a bound for the counting measure supported on $\N_0$, i.e., $\epsilon_\lambda \in \ell^1_\phi$ for $\lambda >\omega$ where $\epsilon_{\lambda}(n):=e^{-\lambda n}$ and $n\in \N_0$. Let $C^\infty((\omega,\infty),X)$ be the space of
$X$-valued functions on $(\omega, \infty)$ infinitely differentiable in the norm topology of $X$. For $r\in C^\infty((\omega,\infty),X)$, set
$$\Vert r\Vert_{W, \phi, \omega}:=\sup\{{\Vert r(\lambda)\Vert \over \Vert \beta_{k, \lambda}\Vert_{1, \phi}}\,\, \vert \,\,k\in \N_0,\lambda>\omega\},
$$
where $ \beta_{k, \lambda}(n)=n^k e^{-\lambda n}$ for $n\in \N_0$ and $\lambda >\omega$. The Widder space $C^\infty_W((\omega,\infty),X; \phi) $ is defined by
$$
C^\infty_W((\omega,\infty),X; \phi)=\{r \in C^\infty((\omega,\infty),X)\, \, \vert \, \, \Vert r\Vert_{W, \phi, \omega}<\infty\}.
$$
Endowed with the norm  $\Vert \cdot \Vert_{W, \phi, \omega}$, the space $C^\infty_W((\omega,\infty),X;\phi)$ is a Banach space, see more details in \cite[Section 1]{cho}. A direct consequence of \cite[Theorem 1.2]{cho} is the following result.

\begin{theorem}\label{widder} Let $\phi:\N_0\to (0,\infty)$ be a sequence such that $\phi(n)\le Ca^n$ for some $C>0$ and $a> 0$. Take now $f:\N_0\to X$ a vector-valued sequence. Then the following assertions are equivalent.

\begin{itemize}
\item[(i)] $\displaystyle{\sup_{n\in \N_0}{\Vert f(n)\Vert \over \phi(n)}}<\infty.$
\item[(ii)] There exists $\theta: \ell^1_\phi\to X$ such that $\theta(\lambda p_\lambda)= \tilde f(\lambda)$ for $ \lambda > a$.
\item[(iii)] $\tilde f\circ \exp\in C^\infty_W((\log(a),\infty),X; \phi).$

\end{itemize}
\begin{proof} To show that (i) implies (ii), we define $\theta( g):=\sum_{n=0}^\infty g(n)f(n)$ for $g=(g(n))_{n\ge 0}\in \ell^1_\phi$. Now consider the part (ii). We define $h(n):=\theta(e_n)$ for $n\in \N_0$.
It is clear that $\displaystyle{\sup_{n\in \N_0}{\Vert h(n)\Vert \over \phi(n)}}<\infty$ and
$$
\tilde f(\lambda)=\theta(\lambda p_\lambda)=\sum_{n\in \N_0}\theta(e_n)\lambda^n=\tilde h(\lambda), \qquad \vert \lambda\vert > a,
$$
where we conclude that $h(n)=f(n)$ for $n\in \N_0$ and part (i) is proved. Now take again part (ii).
Due to  \cite[Theorem 1.2]{cho},
$$
\theta(\epsilon_\mu)=\theta(\exp(\mu) p_{\exp(\mu)})= (\tilde f\circ \exp)(\mu), \qquad \mu >\log(a),
$$
and we conclude the part (iii). Finally  suppose that $\tilde f\circ \exp\in C^\infty_W((\log(a),\infty),X; \phi).$ Again by \cite[Theorem 1.2]{cho}, there exists a bounded homomorphism  $\theta: \ell^1_\phi\to X$ such that $\theta(\epsilon_\mu)= (\tilde f\circ \exp)(\mu)$ for $ \mu > \log(a)$. Since $\epsilon_\mu(n)=e^{-\mu n}=e^{\mu}p_{e^\mu}(n)$, we conclude that  $\theta(\lambda p_\lambda)= \tilde f(\lambda)$ for $ \lambda > a$.
\end{proof}
\end{theorem}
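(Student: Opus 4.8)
The plan is to prove the chain $(i)\Rightarrow(ii)\Rightarrow(i)$ directly and then to obtain $(ii)\Leftrightarrow(iii)$ by reducing both to the Widder-type representation theorem \cite[Theorem 1.2]{cho}. The conceptual bridge is the elementary identity $\lambda p_\lambda=\epsilon_{\log\lambda}$, equivalently $\epsilon_\mu=e^\mu p_{e^\mu}$, which reads off from $p_\lambda(n)=\lambda^{-(n+1)}$ and $\epsilon_\mu(n)=e^{-\mu n}$: it turns the exponentials $\epsilon_\mu$ used in \cite{cho} into the scaled pseudo-resolvents $\lambda p_\lambda$ appearing here, and, under $\lambda=e^\mu$, turns $\tilde f(\lambda)$ into $(\tilde f\circ\exp)(\mu)$.

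For $(i)\Rightarrow(ii)$ I would define $\theta(g):=\sum_{n=0}^\infty g(n)f(n)$ for $g\in\ell^1_\phi$. The series converges absolutely, since $\|g(n)f(n)\|=|g(n)|\,\|f(n)\|\le\big(\sup_{m}\|f(m)\|/\phi(m)\big)\,\phi(n)\,|g(n)|$ and $\phi\cdot|g|$ is summable; the same estimate gives boundedness with $\|\theta\|\le\sup_{n}\|f(n)\|/\phi(n)$. Because $\phi(n)\le Ca^n$, one checks $\lambda p_\lambda\in\ell^1_\phi$ for $\lambda>a$ (the tail is dominated by a geometric series with ratio $a/\lambda<1$), and since $(\lambda p_\lambda)(n)=\lambda^{-n}$ one gets $\theta(\lambda p_\lambda)=\sum_n\lambda^{-n}f(n)=\tilde f(\lambda)$.

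For the converse $(ii)\Rightarrow(i)$ I would reconstruct the sequence by setting $h(n):=\theta(e_n)$, so that $\|h(n)\|\le\|\theta\|\,\|e_n\|_{\ell^1_\phi}=\|\theta\|\,\phi(n)$ and hence $\sup_{n}\|h(n)\|/\phi(n)<\infty$. The crucial point is that $\lambda p_\lambda=\sum_n\lambda^{-n}e_n$ converges in the $\ell^1_\phi$-norm for $\lambda>a$, so continuity of $\theta$ yields $\tilde f(\lambda)=\theta(\lambda p_\lambda)=\sum_n\lambda^{-n}h(n)=\tilde h(\lambda)$ on $|\lambda|>a$. Uniqueness of the inverse $Z$-transform (Laurent's theorem) then forces $f=h$, which gives $(i)$.

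Finally, for $(ii)\Leftrightarrow(iii)$ I would invoke \cite[Theorem 1.2]{cho} with $\omega=\log a$ applied to $r=\tilde f\circ\exp$: that theorem characterizes membership of $r$ in $C^\infty_W((\omega,\infty),X;\phi)$ by the existence of a bounded map $\theta:\ell^1_\phi\to X$ with $\theta(\epsilon_\mu)=r(\mu)$ for $\mu>\omega$, and rewriting $\epsilon_\mu=\lambda p_\lambda$, $r(\mu)=\tilde f(\lambda)$ with $\lambda=e^\mu$ makes this condition coincide with $(ii)$. I expect the real obstacle to lie not in any single estimate but in this bookkeeping: one must verify that the substitution $\lambda=e^\mu$ carries $\lambda>a$ onto $\mu>\log a$, that the weight $\phi$ together with the bound $\omega=\log a$ matches the data required by \cite[Theorem 1.2]{cho}, and that the functional supplied by \cite{cho} is literally the same $\theta$ in both directions, so that the genuine analytic content is imported intact and the statement is indeed a direct consequence.
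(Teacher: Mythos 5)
Your proposal is correct and follows essentially the same route as the paper: the functional $\theta(g)=\sum_n g(n)f(n)$ for (i)$\Rightarrow$(ii), the reconstruction $h(n)=\theta(e_n)$ combined with uniqueness of the $Z$-transform for (ii)$\Rightarrow$(i), and the reduction of (ii)$\Leftrightarrow$(iii) to \cite[Theorem 1.2]{cho} via the identity $\epsilon_\mu=e^\mu p_{e^\mu}$ with $\lambda=e^\mu$. Your version merely spells out some details the paper leaves implicit (absolute convergence, the bound $\Vert\theta\Vert\le\sup_n\Vert f(n)\Vert/\phi(n)$, and the $\ell^1_\phi$-norm convergence of $\lambda p_\lambda=\sum_n\lambda^{-n}e_n$), which is fine.
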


\begin{remark}{\rm  Note that  Theorem \ref{widder} is closely connected to \cite[Theorem 4.2]{cho}, where the Banach space $X$ has the Radon-Nikodym property, RNP, to may identity  the Widder space $C^\infty_W((\omega,\infty),X; \hbox{m})$ and $L^\infty(\R_+, X; \hbox{m})$. The RNP is a  well-known property in the theory of function spaces. This property passes to closed subspaces (hereditary property) and is enjoyed by any reflexive space, any separable dual space, and any $\ell^1(\Gamma)$ space, where $\Gamma$ is a set, see definitions and more details in \cite[Section 1.2]{ABHN}.}
\end{remark}

In the well-known scalar version, $X=\C$,   the following $Z$-transforms are obtained directly:
\begin{eqnarray*}
\widetilde{e_n}(z)&=& z^{-n}, \qquad z\not=0, \quad n\in \N_0;\\
\widetilde{ k^{\alpha}}(z)&=& \displaystyle{z^\alpha \over (z-1)^{\alpha}}, \qquad \vert z\vert >1;\\
\widetilde{p_\lambda}(z)&=&\displaystyle{z\over z\lambda-1}, \qquad \vert z\vert >{1\over \vert \lambda\vert}, \,\, \lambda\in \C\backslash\{0\}, \\
\widetilde{h_n^{\alpha}}(z)&=&\displaystyle\sum_{j=0}^{n}k^{\alpha}(n-j)z^{-j},\qquad z\neq 0.
\end{eqnarray*}
It is also well-known that \begin{equation}\label{convo}
\widetilde{(f\ast g)}(z)= \widetilde{f}(z)\widetilde {g}(z), \end{equation} wherever these $Z$-transforms converge on $z\in \C$,
see these results and many other properties of the $Z$-transform in, for example \cite[Chapter 6]{Elaydi}. In particular, given $\alpha>0$ and $f:\N_0\to X$ such that $\tilde f(z)$ converges for $\vert z\vert >R$, then
$$
\widetilde{(\Delta^{-\alpha} f)}(z)=  \displaystyle{z^\alpha \over (z-1)^{\alpha}}\widetilde{f}(z), \qquad \vert z\vert >\max\{R, 1\}.
$$

\bigskip

We denote by $\,_nf(m):=f(n+m)$ for all $m,n\in\N_0.$ Next technical lemma for the $Z$-transform is applied in  Theorem \ref{th5.5}. Similar results hold for the Laplace transform, see for example \cite[Proposition 4.1]{Ke-Li-Mi}.

\begin{lemma}\label{rt} Let $X$ be a Banach space,  $f:\N_0\to \C$ a scalar sequence and $S:\N_0\to \mathcal{B}(X)$ a vector-operator valued sequence. Then \begin{eqnarray*}
\frac{1}{\mu-\lambda}\widetilde{f}(\mu)\biggl( \mu\widetilde{S}(\lambda)x-\lambda\widetilde{S}(\mu)x \biggr)&=&\displaystyle\sum_{n=0}^{\infty}\lambda^{-n}\sum_{m=0}^{\infty}\mu^{-m}(f*\,_nS)(m)x,\\
\frac{1}{\mu-\lambda}\biggl( \mu\widetilde{f}(\lambda)-\lambda\widetilde{f}(\mu) \biggr)\widetilde{S}(\mu)x&=&\displaystyle\sum_{n=0}^{\infty}\lambda^{-n}\sum_{m=0}^{\infty}\mu^{-m}(\,_nf*S)(m)x,
\end{eqnarray*}
for $|\lambda|>|\mu|$ sufficiently large where the double $Z$-transform converge and $x\in X$.
\end{lemma}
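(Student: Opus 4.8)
The plan is to verify both identities by direct manipulation of the $Z$-transform, treating them as double power series in $\lambda^{-1}$ and $\mu^{-1}$ and matching coefficients. The two statements are symmetric (one has the kernel $f$ convolved with the shift $\,_nS$, the other the shift $\,_nf$ convolved with $S$), so I would prove one in full detail and obtain the other by the same reasoning with the roles of $f$ and $S$ interchanged. Throughout I assume $|\lambda|>|\mu|$ are large enough that $\widetilde f$, $\widetilde S$ converge absolutely and that all the rearrangements below are justified by absolute convergence of the resulting double series.

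First I would expand the right-hand side of the first identity. Writing out the convolution,
\begin{equation*}
(f*\,_nS)(m)x=\sum_{k=0}^{m}f(m-k)S(n+k)x,
\end{equation*}
so the right-hand side becomes $\sum_{n,m\ge 0}\lambda^{-n}\mu^{-m}\sum_{k=0}^{m}f(m-k)S(n+k)x$. The key computational step is to reindex: I would set $p=n+k$ (the argument of $S$) and $q=m-k$ (the argument of $f$), so that the summand carries $f(q)S(p)x$ with weight $\lambda^{-(p-k)}\mu^{-(q+k)}$. Summing the free parameter $k$ over its allowed range produces a geometric factor in $(\lambda/\mu)^{k}$, and it is exactly this geometric sum that generates the prefactor $\tfrac{1}{\mu-\lambda}$ together with the combination $\mu\widetilde S(\lambda)-\lambda\widetilde S(\mu)$ once the $p$- and $q$-sums are recognised as $\widetilde S$ and $\widetilde f$ evaluated at the appropriate points. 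The cross term $\mu\widetilde S(\lambda)$ versus $\lambda\widetilde S(\mu)$ appears because the geometric series in $k$ is truncated differently depending on whether one organises the sum by $S$ or by $f$; the difference of the two ``endpoint'' contributions is precisely what the numerator $\mu\widetilde S(\lambda)x-\lambda\widetilde S(\mu)x$ records.

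The main obstacle I anticipate is bookkeeping the index shift cleanly: the presence of both the convolution index $k$ and the shift index $n$ means that the argument $p=n+k$ of $S$ ranges over all of $\N_0$ but is reached in several ways, and one must take care that the change of summation order does not double-count or drop boundary terms. The cleanest way to control this is to first interchange the $n$- and $k$-sums (legitimate by absolute convergence), fix $p=n+k$ and sum over the pairs $(n,k)$ with $n+k=p$, which yields the telescoping/geometric factor $\tfrac{\lambda^{-n}-\text{shift}}{\,\cdot\,}$ responsible for the $\tfrac{1}{\mu-\lambda}$ prefactor. After this the remaining two sums factor as a product of $Z$-transforms. The second identity follows from the identical argument after swapping the roles of $f$ and $S$ in the reindexing, using $(\,_nf*S)(m)=\sum_{k=0}^m f(n+k)S(m-k)$; no new idea is required, only the symmetric computation.
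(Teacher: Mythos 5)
Your proposal is correct, and I verified the key computation: after the substitution $p=n+k$, $q=m-k$, the triple sum becomes $\sum_{p,q\ge 0}f(q)S(p)x\,\lambda^{-p}\mu^{-q}\sum_{k=0}^{p}(\lambda/\mu)^{k}$, and the finite geometric sum $\sum_{k=0}^{p}(\lambda/\mu)^{k}=\bigl(\lambda^{p+1}-\mu^{p+1}\bigr)/\bigl(\mu^{p}(\lambda-\mu)\bigr)$ indeed produces the prefactor $\tfrac{1}{\lambda-\mu}$ and splits the $p$-sum into $\lambda\widetilde S(\mu)x-\mu\widetilde S(\lambda)x$, exactly the claimed right-hand side; the symmetric computation with $(\,_nf*S)(m)$ gives the second identity. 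Your route differs from the paper's in its organization. The paper first applies the convolution theorem for the $Z$-transform in the variable $\mu$, writing the inner sum as $\widetilde f(\mu)\,\widetilde{\,_nS}(\mu)x$, then uses the shift identity $\widetilde{\,_nS}(\mu)x=\mu^{n}\bigl(\widetilde S(\mu)x-\sum_{j=0}^{n-1}\mu^{-j}S(j)x\bigr)$ and sums two series in $n$: an infinite geometric series $\sum_n(\mu/\lambda)^n$ giving the $\widetilde S(\mu)$ term, and a Fubini rearrangement of the partial-sum term which resums into $\tfrac{\mu}{\lambda-\mu}\widetilde S(\lambda)x$. Your version collapses all of this into a single change of variables plus one \emph{finite} geometric sum in $k$, which is somewhat more self-contained (no appeal to the convolution theorem or the shift formula) and makes the convergence bookkeeping slightly cleaner, since the only infinite sums appearing are the absolutely convergent transforms at $\lambda$ and $\mu$ themselves; what it loses is the modularity of the paper's argument, whose intermediate identities (the shifted transform formula in particular) are reused conceptually elsewhere in Section 4. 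One small caution: your verbal explanation that the two terms arise "because the geometric series in $k$ is truncated differently depending on whether one organises the sum by $S$ or by $f$" is imprecise --- they are simply the two endpoint terms of the single finite geometric sum --- but this does not affect the validity of the computation.
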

\begin{proof}
To show the first equality, note that, $$
\displaystyle\widetilde{\,_nS}(\mu)x=\sum_{m=0}^{\infty}\mu^{-m}S(m+n)x=\mu^{n}\sum_{j=n}^{\infty}\mu^{-j}S(j)x=\mu^{n}\biggl(\widetilde{S}(\mu)x-\sum_{j=0}^{n-1}\mu^{-j}S(j)x\biggr),$$ for $x\in X$ and $n\geq 1$. Then we get that
\begin{displaymath}\begin{array}{l}
\displaystyle\sum_{n=0}^{\infty}\lambda^{-n}\sum_{m=0}^{\infty}\mu^{-m}(f*\,_nS)(m)x=\widetilde{f}(\mu)\displaystyle\sum_{n=0}^{\infty}\lambda^{-n}\widetilde{\,_nS}(\mu)x=\widetilde{f}(\mu)\biggl( \widetilde{S}(\mu)x+\displaystyle\sum_{n=1}^{\infty}\lambda^{-n}\widetilde{\,_nS}(\mu)x\biggr)\\
=\displaystyle\widetilde{f}(\mu)\widetilde{S}(\mu)x\sum_{n=0}^{\infty}\biggl(\frac{\mu}{\lambda}\biggr)^n-\widetilde{f}(\mu)\displaystyle\sum_{n=1}^{\infty}\biggl(\frac{\mu}{\lambda}\biggr)^n\sum_{j=0}^{n-1}\mu^{-j}S(j)x.
\end{array}\end{displaymath} where we have applied the equality (\ref{convo}). Finally, as $$\displaystyle\sum_{n=1}^{\infty}\biggl(\frac{\mu}{\lambda}\biggr)^n\sum_{j=0}^{n-1}\mu^{-j}S(j)x=\sum_{j=0}^{\infty}\mu^{-j}S(j)x\displaystyle\sum_{n=j+1}^{\infty}\biggl(\frac{\mu}{\lambda}\biggr)^n=\frac{\mu}{\lambda-\mu}\widetilde{S}(\lambda)x,$$ we conclude that $$\displaystyle\sum_{n=0}^{\infty}\lambda^{-n}\sum_{m=0}^{\infty}\mu^{-m}(f*\,_n S)(m)x=\frac{1}{\lambda-\mu}\widetilde{f}(\mu)\biggl( \lambda\widetilde{S}(\mu)x-\mu\widetilde{S}(\lambda)x \biggr),$$
for $|\lambda|>|\mu|$ sufficiently large  and $x\in X$. Following these ideas, the second equality  is also shown.
\end{proof}

\begin{theorem}\label{th5.5} Let $\alpha\geq 0$, $X$ a Banach space,  $\{T_n\}_{n\in\N_0}\subset \mathcal{B}(X)$ such that $T_0=I$, $\Vert T_n\Vert \le C \phi(n) \le C 'a^n$ ($\phi\in \omega_{\alpha} $ and
$a>1$) for all $n\in\N_0$ with $C,C'>0$.
 The following statements are equivalent: \begin{itemize}
\item[(i)] The operator-valued sequence $\{T_n\}_{n\in\N_0}$ satisfies the equation \eqref{eq4.2}.
\item[(ii)] There exists  a bounded algebra homomorphism $\theta: \tau^{\alpha}(\phi)\to  \mathcal{B}(X)$ such that $\theta(h_n^\alpha)=T_n$ for $n\in \N_0$.
\item[(iii)] The family $\{ R(\lambda) \}_{|\lambda|>a}$ defined by
$$\displaystyle R(\lambda)x:=\frac{(\lambda-1)^{\alpha}}{\lambda^{\alpha+1}}\displaystyle\sum_{n=0}^{\infty}\lambda^{-n}T_n(x),\qquad |\lambda|>a,\, x\in X,$$  is a pseudo-resolvent.
\end{itemize}
In these cases the generator of  $\{T_n\}_{n\in\N_0}$, defined by $T:=T_1-\alpha I$ in  Remark \ref{generador}, satisfies that $T_n = \Delta^{-\alpha}\mathcal{T}(n)$ for $n\in \N_0$,  $\theta(e_1)=T$, $\{\lambda \in \C\,\,\vert \, \vert \lambda\vert>a\}\subset \rho(T)$ and
$$R(\lambda)=(\lambda -T)^{-1}, \qquad \vert\lambda\vert >a.
$$

\end{theorem}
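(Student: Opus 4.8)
The plan is to prove the cycle (i)$\Leftrightarrow$(ii), (ii)$\Rightarrow$(iii), (iii)$\Rightarrow$(i), keeping $T:=T_1-\alpha I$ fixed throughout, and then to read off the closing assertions. For (i)$\Rightarrow$(ii): by Theorem \ref{TheoremEcFunc} the functional equation \eqref{eq4.2} together with $T_0=I$ forces $T_n=\Delta^{-\alpha}\mathcal{T}(n)$; since $\phi\in\omega_\alpha\subset\omega_{\alpha, loc}$ and $\|\Delta^{-\alpha}\mathcal{T}(n)\|=\|T_n\|\le C\phi(n)$, Theorem \ref{homomorphism} provides the bounded homomorphism $\theta$ with $\theta(h_n^\alpha)=\Delta^{-\alpha}\mathcal{T}(n)=T_n$. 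For (ii)$\Rightarrow$(i): by Example \ref{canoni} and Remark \ref{generador} the family $\{h_n^\alpha\}$ satisfies \eqref{eq4.2} inside the algebra $\tau^\alpha(\phi)$ (with convolution as product); applying the bounded algebra homomorphism $\theta$ termwise transports this identity to $\{\theta(h_n^\alpha)\}=\{T_n\}$, while $\theta(h_0^\alpha)=\theta(e_0)$ and the hypothesis $T_0=I$ give the normalization. Hence \eqref{eq4.2} holds for $\{T_n\}$, which is (i).

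The crucial point for (ii)$\Rightarrow$(iii) is the identity $R(\lambda)=\theta(p_\lambda)$ for $|\lambda|>a$. Indeed $p_\lambda\in\tau^\alpha(\phi)$ for $|\lambda|>a$ and admits the expansion $p_\lambda=\sum_{n\ge 0}W^\alpha p_\lambda(n)\,h_n^\alpha$ (Example \ref{NormEquiv}); since $W^\alpha p_\lambda=\frac{(\lambda-1)^\alpha}{\lambda^\alpha}p_\lambda$ with $p_\lambda(n)=\lambda^{-(n+1)}$ (Example \ref{ex2.3}(i)), the continuity and linearity of $\theta$ together with $\theta(h_n^\alpha)=T_n$ give $\theta(p_\lambda)=\frac{(\lambda-1)^\alpha}{\lambda^\alpha}\sum_n\lambda^{-(n+1)}T_n=R(\lambda)$. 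As $p_\lambda$ is a pseudo-resolvent, $(\mu-\lambda)p_\lambda*p_\mu=p_\lambda-p_\mu$ (Example \ref{ex2.3}(i)), and $\theta$ is multiplicative, $R(\lambda)=\theta(p_\lambda)$ inherits the resolvent identity, proving (iii).

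The main work is (iii)$\Rightarrow$(i). Writing $\tilde S(\lambda):=\sum_n\lambda^{-n}T_n$ and using $\frac{(\lambda-1)^\alpha}{\lambda^{\alpha+1}}=(\lambda\widetilde{k^\alpha}(\lambda))^{-1}$, the pseudo-resolvent identity for $R$ reduces, after clearing the scalar factors, to
$$(\mu-\lambda)\tilde S(\lambda)\tilde S(\mu)=\mu\widetilde{k^\alpha}(\mu)\tilde S(\lambda)-\lambda\widetilde{k^\alpha}(\lambda)\tilde S(\mu).$$
Expanding the right-hand side as a double $Z$-transform by means of the two identities of Lemma \ref{rt} (with $f=k^\alpha$ and $S(n)=T_n$) together with the elementary product $\widetilde{k^\alpha}(\lambda)\tilde S(\mu)$, and matching the coefficient of $\lambda^{-n}\mu^{-m}$ against the left-hand side $\sum_{n,m}\lambda^{-n}\mu^{-m}T_nT_m$ via uniqueness of the $Z$-transform, one obtains
$$T_nT_m=\sum_{u=n}^{n+m}k^\alpha(n+m-u)T_u-\sum_{u=0}^{m-1}k^\alpha(n+m-u)T_u.$$
This is \eqref{eq4.2} with $n$ and $m$ interchanged; since pseudo-resolvents commute, the relation $R(\lambda)R(\mu)=R(\mu)R(\lambda)$ forces $T_nT_m=T_mT_n$, so the interchanged identity is equivalent to \eqref{eq4.2}, and with $T_0=I$ this is (i). I expect this coefficient comparison to be the hard step: the three double sums must be carefully realigned and reindexed, and the boundary cases $n=0$ or $m=0$ checked, so that the recovered identity matches \eqref{eq4.2} exactly.

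Finally, the closing assertions follow at once. From (i), Theorem \ref{TheoremEcFunc} gives $T_n=\Delta^{-\alpha}\mathcal{T}(n)$ with generator $T=T_1-\alpha I$, and $\theta(e_1)=\theta(h_1^\alpha)-\alpha\theta(h_0^\alpha)=T_1-\alpha I=T$ since $e_1=h_1^\alpha-\alpha h_0^\alpha$. For the resolvent, the factorization $p_\lambda*(\lambda e_0-e_1)=e_0$ (Example \ref{ex2.3}(i)) and multiplicativity of $\theta$ yield $R(\lambda)(\lambda I-T)=(\lambda I-T)R(\lambda)=\theta(e_0)=I$ for $|\lambda|>a$, where $R(\lambda)=\theta(p_\lambda)$ is available in these cases; hence $\{\lambda:|\lambda|>a\}\subset\rho(T)$ and $R(\lambda)=(\lambda-T)^{-1}$.
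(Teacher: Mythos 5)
Your proposal is correct, and its hardest leg coincides with the paper's argument: for (iii)$\Rightarrow$(i) the paper also rewrites \eqref{eq4.2} in the convolution form $(k^{\alpha}*\,_nS)(m)-(\,_nk^{\alpha}*S)(m)+k^{\alpha}(n)T_m$ (with $S(n)=T_n$), applies both identities of Lemma \ref{rt}, and concludes by injectivity of the double $Z$-transform, exactly as you do; likewise (i)$\Rightarrow$(ii) is quoted from Theorems \ref{TheoremEcFunc} and \ref{homomorphism} in both texts. Where you genuinely deviate is the leg (ii)$\Rightarrow$(iii) and the closing resolvent assertions: the paper disposes of both by citing Corollary \ref{reci}, which however yields $\theta(p_\lambda)=(\lambda-T)^{-1}$ only for $\vert\lambda\vert>\Vert T\Vert$, so when $a<\Vert T\Vert$ an (unstated) analytic-continuation step is still needed to reach the full region $\vert\lambda\vert>a$ demanded in (iii). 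Your route avoids this entirely: expanding $p_\lambda=\sum_n W^{\alpha}p_\lambda(n)h_n^{\alpha}$ (Examples \ref{ex2.3}(i) and \ref{NormEquiv}) gives $R(\lambda)=\theta(p_\lambda)$ for every $\vert\lambda\vert>a$, and pushing the Hilbert equation and the factorization $p_\lambda*(\lambda e_0-e_1)=e_0$ through the multiplicative map $\theta$ delivers the pseudo-resolvent identity and $\{\vert\lambda\vert>a\}\subset\rho(T)$ directly on the stated region; this is tighter than the paper's citation. Two further points in your favor: you observe explicitly that the identity recovered from the $Z$-transform is \eqref{eq4.2} with $n$ and $m$ interchanged, and you close that gap by noting that pseudo-resolvents commute, hence $T_nT_m=T_mT_n$ (the paper silently identifies the two forms); and your extra implication (ii)$\Rightarrow$(i), obtained by transporting \eqref{eq4.2} for $\{h_n^{\alpha}\}$ (Remark \ref{generador}) through $\theta$, is sound, though redundant once the cycle (i)$\Rightarrow$(ii)$\Rightarrow$(iii)$\Rightarrow$(i) is in place.
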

\begin{proof}
The proof (i)$\Rightarrow$(ii) is a direct consequence of Theorem \ref{TheoremEcFunc} and Theorem \ref{homomorphism}. To show that (ii)$\Rightarrow$(iii), we use that Corollary \ref{reci}. Finally we prove (iii)$\Rightarrow$(i). It is clear that $$R(\lambda)=\frac{\widetilde{{\frak T}}(\lambda)}{\lambda\widetilde{k^{\alpha}}(\lambda)}, \qquad  \vert\lambda\vert >a,$$ where ${\frak T}=\{T_n\}_{n\in\N_0}$ and $\widetilde{{\frak T}}$  is given by (\ref{zeta}).  Since $\{ R(\lambda) \}_{|\lambda|>a}$ is a pseudo-resolvent, then $$(\mu-\lambda)\frac{\widetilde{{\frak T}}(\lambda)\widetilde{{\frak T}}(\mu)}{\lambda\widetilde{k^{\alpha}}(\lambda)\mu\widetilde{k^{\alpha}}(\mu)}=\frac{\widetilde{{\frak T}}(\lambda)}{\lambda\widetilde{k^{\alpha}}(\lambda)}-\frac{\widetilde{{\frak T}}(\mu)}{\mu\widetilde{k^{\alpha}}(\mu)},\qquad \vert\lambda\vert, \vert \mu\vert >a, \quad \mu\not=\lambda,$$ so $$\widetilde{{\frak T}}(\lambda)\widetilde{{\frak T}}(\mu)=\frac{1}{\mu-\lambda}\biggl( \mu\widetilde{k^{\alpha}}(\mu)\widetilde{{\frak T}}(\lambda)-\lambda\widetilde{k^{\alpha}}(\lambda)\widetilde{{\frak T}}(\mu) \biggr), \qquad \vert\lambda\vert, \vert \mu\vert >a, \quad \mu\not=\lambda.$$ On the other hand, note that the condition (\ref{eq4.2}) is expressed by \begin{displaymath}\begin{array}{l}
\displaystyle(k^{\alpha}*\,_n{\frak T})(m)-(\,_nk^{\alpha}*{\frak T})(m)+k^{\alpha}(n)T_m =\sum_{u=n}^{n+m}k^{\alpha}(n+m-u)T_u-\sum_{u=0}^{m-1}k^{\alpha}(n+m-u)T_u,\end{array}\end{displaymath} for $m\geq 1$ and $n\geq 0.$  We apply Lemma \ref{rt}  and do some simple operations to get that
$$
\sum_{n=0}^{\infty}\lambda^{-n}\sum_{m=0}^{\infty}\mu^{-m}\left(\displaystyle(k^{\alpha}*\,_n{\frak T})(m)-(\,_nk^{\alpha}*{\frak T})(m)+k^{\alpha}(n)T_m \right)={ \mu\widetilde{k^{\alpha}}(\mu)\widetilde{{\frak T}}(\lambda)-\lambda\widetilde{k^{\alpha}}(\lambda)\widetilde{{\frak T}}(\mu)\over\mu-\lambda},
 $$ for $\vert\lambda\vert, \vert \mu\vert >a, $ and $ \mu\not=\lambda$. Then we conclude that $\{T_n\}_{n\in\N_0}$ satisfies \eqref{eq4.2}, as consequence of the injectivity of the double $Z$-transform.
 Finally, by Corollary \ref{reci}
 $$
 R(\lambda)=\theta(p_\lambda)=(\lambda -T)^{-1}, \qquad \vert\lambda\vert >a,
 $$
and we finish the proof.
\end{proof}

\section{Applications, examples and final comments}
\setcounter{theorem}{0}
\setcounter{equation}{0}

In this last section, we present some applications, comments, examples and counterexamples of some results presented in this paper.

\subsection{Bounds for Abel means} Given $T\in {\mathcal B}(X)$ and $0\leq r<1$ the Abel mean of order $r$ of operator $T$, $A_r(T),$ is defined by
$$
A_r(T)x:= (1-r)\sum_{n=0}^\infty r^nT^n(x), \qquad x\in X,
$$
when this series converges for some $r\in[0,1)$, see for example \cite{LSS}. Note that for $0< r<\frac{1}{r(T)}$ then ${1\over r}\in\rho(T) $ and
$$
A_r(T)=\frac{(1-r)}{r}(\frac{1}{r}-T)^{-1}, \qquad  0< r <\min\{1, {1\over r(T)}\},
$$
where $r(T)=\lim_{n\to\infty}\lVert T^{n}\rVert^{\frac{1}{n}}$ denotes the spectral radius of $T.$

The next theorem improves  \cite[Proposition 2.1 (i)]{LSS} given for $\alpha \in \{0,1\}$.
\begin{theorem}\label{abels} Take $\alpha\ge 0 $ and $T\in\mathcal{B}(X)$. Then
$$
A_r(T)x=(1-r)^{\alpha+1}\sum_{n=0}^\infty r^{n}\Delta^{-\alpha}\mathcal{T}(n)x, \qquad 0\leq r <\min\{1, {1\over r(T)}\}.
$$
In the case that $\Vert\Delta^{-\alpha}\mathcal{T}(n)\Vert\le Ck^{\gamma+1}(n)$ for $n\ge 1$ and $\gamma\ge \alpha$ then
$$
\Vert A_r(T)\Vert\le  C(1-r)^{-(\gamma-\alpha)}, \qquad 0\leq r<1.
$$
In particular if $T$ is a $(C, \alpha)$-bounded operator, then $ \sup_{0\leq r<1}\Vert  A_r(T)\Vert <\infty$.
\end{theorem}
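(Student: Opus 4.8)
The plan is to recognise the series $\sum_{n=0}^{\infty} r^{n}\Delta^{-\alpha}\mathcal{T}(n)$ as a Cauchy product of two generating functions and then to read the norm bound directly off the closed form \eqref{eq2.1}.

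First I would fix $0\le r<\min\{1,1/r(T)\}$ and prove the identity. Since $r\,r(T)<1$, the root test gives absolute convergence of $\sum_{n} r^{n}T^{n}$, while $\sum_{n} k^{\alpha}(n)r^{n}=(1-r)^{-\alpha}$ converges absolutely by \eqref{eq2.1}. Because $\Delta^{-\alpha}\mathcal{T}(n)=(k^{\alpha}*\mathcal{T})(n)$, the Cauchy product (legitimate by absolute convergence) yields
\[
\sum_{n=0}^{\infty} r^{n}\Delta^{-\alpha}\mathcal{T}(n)x=\Big(\sum_{n=0}^{\infty} k^{\alpha}(n)r^{n}\Big)\Big(\sum_{n=0}^{\infty} r^{n}T^{n}x\Big)=\frac{1}{(1-r)^{\alpha}}\sum_{n=0}^{\infty} r^{n}T^{n}x,\qquad x\in X.
\]
Multiplying by $(1-r)^{\alpha+1}$ and recalling $A_r(T)x=(1-r)\sum_{n=0}^{\infty} r^{n}T^{n}x$ gives the first assertion.

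For the bound, assume $\Vert\Delta^{-\alpha}\mathcal{T}(n)\Vert\le Ck^{\gamma+1}(n)$ (the value $n=0$ being $\Vert I\Vert=1=k^{\gamma+1}(0)$, which we absorb into $C$). I would first note that this polynomial growth forces $r(T)\le1$, so that $\min\{1,1/r(T)\}=1$ and the identity above is valid on the whole of $[0,1)$: writing $k^{-\alpha}$ for the convolution inverse of $k^{\alpha}$ (generating function $(1-z)^{\alpha}$, for which $\sum_{n}|k^{-\alpha}(n)|r^{n}<\infty$ when $r<1$), one has $T^{n}=(k^{-\alpha}*\Delta^{-\alpha}\mathcal{T})(n)$, whence $\sum_{n}\Vert T^{n}\Vert r^{n}\le\big(\sum_{n}|k^{-\alpha}(n)|r^{n}\big)\big(C\sum_{n}k^{\gamma+1}(n)r^{n}\big)<\infty$ for $r<1$, giving $r(T)\le1$. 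With the range settled, \eqref{eq2.1} of order $\gamma+1$ yields
\[
\Vert A_r(T)\Vert\le(1-r)^{\alpha+1}\sum_{n=0}^{\infty} r^{n}\Vert\Delta^{-\alpha}\mathcal{T}(n)\Vert\le C(1-r)^{\alpha+1}\sum_{n=0}^{\infty} r^{n}k^{\gamma+1}(n)=C(1-r)^{-(\gamma-\alpha)}.
\]
Finally, if $T$ is $(C,\alpha)$-bounded then by definition $\Vert\Delta^{-\alpha}\mathcal{T}(n)\Vert\le Ck^{\alpha+1}(n)$, so one may take $\gamma=\alpha$; the exponent vanishes and $\sup_{0\le r<1}\Vert A_r(T)\Vert\le C<\infty$.

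The routine parts are the Cauchy product and the evaluation of the sum through \eqref{eq2.1}. The one point needing genuine care is the extension of the identity to the full interval $[0,1)$: the estimate $\Vert A_r(T)\Vert\le C(1-r)^{-(\gamma-\alpha)}$ only makes sense once $A_r(T)$ is known to be defined there, and this is exactly where the polynomial Cesàro bound must be converted into the spectral information $r(T)\le1$.
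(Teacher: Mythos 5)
Your proof is correct, and it reaches both conclusions by a more elementary route than the paper. For the identity, the paper does not use a Cauchy product: it writes $A_r(T)x=\frac{1-r}{r}\sum_{n} p_{1/r}(n)T^n x$, applies the duality of Remark \ref{Duality} to move $W^{\alpha}$ onto $p_{1/r}$ and $\Delta^{-\alpha}$ onto $\mathcal{T}$, and then uses the eigenfunction identity $W^{\alpha}p_{1/r}=(1-r)^{\alpha}p_{1/r}$ of Example \ref{ex2.3}(i); your rearrangement via absolute convergence of the two generating functions is the same underlying computation, stripped of the Weyl-calculus packaging. For the bound, the paper invokes Theorem \ref{homomorphism} to obtain a bounded homomorphism $\theta:\tau^{\alpha}(k^{\gamma+1})\to\mathcal{B}(X)$, writes $A_r(T)=\frac{1-r}{r}\theta(p_{1/r})$, and quotes the norm formula \eqref{normass}; your termwise estimate using $\sum_{n}k^{\gamma+1}(n)r^{n}=(1-r)^{-(\gamma+1)}$ is exactly the computation hidden inside \eqref{normass}, so the numerical content is identical, but you bypass the algebra machinery entirely. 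What the paper's route buys is the subordination picture --- the Abel mean realized as the image of the concrete algebra element $p_{1/r}$ under $\theta$ --- which is precisely what Section 5.1 is meant to illustrate. What your route buys is self-containedness, plus one genuine gain in rigor: you prove that the hypothesis $\Vert\Delta^{-\alpha}\mathcal{T}(n)\Vert\le Ck^{\gamma+1}(n)$ forces $r(T)\le 1$ (via the convolution inverse $k^{-\alpha}$), so that $A_r(T)$ is defined and the identity is valid on all of $[0,1)$; the paper asserts $A_r(T)=\frac{1-r}{r}\theta(p_{1/r})$ for all $0<r<1$ without making this step explicit, which is exactly the point you correctly flag as the one needing care.
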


\begin{proof} Let $\alpha\ge 0$, and $p_{1\over r}(n)=r^{n+1}$ for $0<r<1$. By Remark \ref{Duality}, we have that

\begin{eqnarray*}
A_r(T)x &=& (1-r)\sum_{n=0}^\infty r^nT^n(x)= \frac{1-r}{r}\sum_{n=0}^\infty W^\alpha p_{1\over r}(n)\Delta^{-\alpha}\mathcal{T}(n)x\\
&=& \frac{(1-r)^{\alpha+1}}{r}\sum_{n=0}^\infty p_{1\over r}(n)\Delta^{-\alpha}\mathcal{T}(n)x
= (1-r)^{\alpha+1}\sum_{n=0}^\infty r^{n}\Delta^{-\alpha}\mathcal{T}(n)x,
\end{eqnarray*}
 where we have used Example \ref{ex2.3} (i) for $0<r<\min\{1, {1\over r(T)}\}$. For $r=0$ is obvious.

In the case that $\Vert\Delta^{-\alpha}\mathcal{T}(n)\Vert\le Ck^{\gamma+1}(n)$ for $n\ge 1$ and $\gamma\ge \alpha$, there exists a bounded algebra homomorphism $\theta:\tau^{\alpha}(k^{\gamma+1})\to \mathcal{B}(X)$ by Theorem \ref{homomorphism}. Note that $p_{1\over r}\in  \tau^{\alpha}(k^{\gamma+1})$ and
${A_r(T)= \frac{1-r}{r}\theta( p_{1\over r}),}$ for  $0<r<1.$
By formula (\ref{normass}), we obtain that
$$
\Vert A_r(T)\Vert \le C \frac{1-r}{r}q_{k^{\gamma+1}}(p_{1\over r})=  C\frac{1-r}{r}{r\over (1-r)^{\gamma+1-\alpha}}={C\over(1-r)^{\gamma-\alpha}}, \qquad 0<r<1,
$$
and we conclude the proof.
\end{proof}

\begin{remark}{\rm  If we consider $\lVert T^n \rVert\leq C n^{\gamma},$ with $\gamma\geq 0,$ using that $n^{\gamma}\leq \Gamma(\gamma+1)k^{\gamma+1}(n)$ which follows easily from \eqref{gau}, we get that  $$\lVert A_r(T)\rVert\leq  C\Gamma(\gamma+1)(1-r)^{-\gamma},$$ which improves the bound of \cite[Proposition 2.1 (i) (2.3)]{LSS}. Use similar arguments to improve the bound of \cite[Proposition 2.1 (i) (2.4)]{LSS}.}
\end{remark}

\begin{remark}{\rm An inverse result exists on Banach lattices, see \cite[Corollary 3.2]{LSS}, which proves that for any $\alpha>-1$ and a positive bounded operator $T,$  $\{ (1-r)^{\alpha}A_r(T),\ 0\leq r<1\}$ is bounded if only if $\lVert \Delta^{-1}\mathcal{T}(n)\rVert \leq C (n+1)^{\alpha},$ $n\in\N_0.$ In particular,  $T$ is Abel-mean bounded if only if  is $(C,1)$-bounded. Note that there are examples of positive $(C,1)$-bounded operators in Banach lattices which are not power bounded, see remarks following \cite[Corollary 3.2]{LSS}.}
\end{remark}



\subsection{$\alpha$-Times integrated semigroups and Ces\`{a}ro sums}

Now, let $A$ be a closed linear operator on $X,$ $\alpha> 0$ and $\{S_{\alpha}(t)\}_{t\geq 0}\subset {\mathcal B}(X)$ an $\alpha$-times integrated semigroup generated by $A$, that is,  $S_\alpha(0)=0$, the map $[0,\infty)\to X$, $r\mapsto S_\alpha(r)x$  is strongly continuous and
$$
S_\alpha(t)S_\alpha(s)x={1\over \Gamma(\alpha)}\left(\int_t^{t+s}(t+s-r)^{\alpha-1}S_\alpha(r)xdr- \int_0^{s}(t+s-r)^{\alpha-1} S_\alpha(r)xdr\right), \qquad x\in X,
$$
 for $t,s >0$; for $\alpha=0,$ $\{S_{0}(t)\}_{t\geq 0}$ is an  usual $C_0$-semigroup, $S_0(0)=I$  and $S_0(t+s)=S_0(t)S_0(s)$ for $t,s>0$. In the case that  $\{S_{\alpha}(t)\}_{t\geq 0}$ is a non-degenerate family and $\Vert S_\alpha(t)\Vert \le C e^{\omega t}$ for $C>0$, $\omega \in \R$, then there exists a closed operator, $(A, D(A))$, called the generator of $\{S_{\alpha}(t)\}_{t\geq 0}$, such that
 \begin{equation}\label{resolvent}
 (\lambda-A)^{-1}x=\lambda^\alpha \int_0^\infty e^{-\lambda t} S_\alpha (t)xdt, \qquad \Re \lambda>\omega, \qquad x\in X.
 \end{equation}
 Moreover the following integral equality holds
\begin{equation}\label{integral}
A\int_0^tS_\alpha(s)xds=S_\alpha(t)x-{t^\alpha\over \Gamma(\alpha+1)}x, \qquad t > 0, \quad x\in X.
\end{equation}

 \begin{theorem} \label{resolventinte}Suppose that $\{S_{\alpha}(t)\}_{t\geq 0}$ is an $\alpha$-times integrated semigroup generated by $(A, D(A))$ such that $\Vert S_\alpha(t)\Vert \le C e^{\omega t}$ with $0\le\omega<1$. Then $1\in \rho(A)$, $R:=(1-A)^{-1}$,  ${\mathcal R}(n)=R^n$ and
\begin{eqnarray*}\label{convoresol}
\Delta^{-\alpha}{\mathcal R}(n)x&=&(I-A)\int_0^\infty {e^{-t}t^{n}\over n!}S_\alpha(t)xdt, \qquad n\in \N_0,\cr
&=&\int_0^\infty {e^{-t}t^{n-1}\over (n-1)!}S_\alpha(t)xdt+k^{\alpha+1}(n)x-k^{\alpha+1}(n-1)x, \qquad n\ge 1, \quad x\in X,
\end{eqnarray*}
In particular if $\{S_{\alpha}(t)\}_{t\geq 0}$  has temperated growth, i.e.  $\Vert S_\alpha(t)\Vert \le C t^\alpha$ for $t>0$, then $(I-A)^{-1}$ is a $(C, \alpha)$-bounded operator.
 \end{theorem}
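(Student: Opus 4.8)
The plan is to compute the operator-valued generating function of the sequence $\{\Delta^{-\alpha}\mathcal{R}(n)\}_{n\in\N_0}$ and to read off both identities by expanding it in two different ways. First, since $0\le\omega<1$ we have $\Re(1)=1>\omega$, so $1\in\rho(A)$ by \eqref{resolvent}, and $Rx=(1-A)^{-1}x=\int_0^\infty e^{-t}S_\alpha(t)x\,dt$. Writing $\mathcal{R}(n)=R^n$, the Ces\`aro sum is the discrete convolution $\Delta^{-\alpha}\mathcal{R}=k^\alpha\ast\mathcal{R}$, so for real $s\in[0,s_0)$ with $s_0>0$ small enough (so that $s_0<1$, $s_0<1-\omega$, and the Neumann series $\sum_{n}s^nR^n$ converges) the generating function factors, by \eqref{eq2.1}, as
$$G(s):=\sum_{n=0}^\infty \Delta^{-\alpha}\mathcal{R}(n)s^n=\Big(\sum_{n=0}^\infty k^\alpha(n)s^n\Big)(I-sR)^{-1}=\frac{1}{(1-s)^\alpha}(I-sR)^{-1}.$$

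Next I would convert $(I-sR)^{-1}$ into a resolvent of $A$. Setting $\lambda:=1-s$ and using $R=(1-A)^{-1}$, a short computation with the resolvent equation gives the bounded-operator identity $(I-sR)^{-1}=I+s(\lambda-A)^{-1}$. Inserting \eqref{resolvent} at $\lambda=1-s$, where $\lambda^\alpha=(1-s)^\alpha$, and using $(1-A)(\lambda-A)^{-1}=I+s(\lambda-A)^{-1}$ (as $1-\lambda=s$), I obtain the key identity
$$G(s)x=(1-A)\int_0^\infty e^{-(1-s)t}S_\alpha(t)x\,dt=\frac{x}{(1-s)^\alpha}+s\int_0^\infty e^{-(1-s)t}S_\alpha(t)x\,dt,\qquad x\in X.$$

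Both displayed formulas of the statement then follow by expanding this identity in powers of $s$. Expanding $e^{st}=\sum_n (st)^n/n!$ in the first expression and matching the coefficient of $s^n$ gives $\Delta^{-\alpha}\mathcal{R}(n)x=(I-A)\int_0^\infty \frac{e^{-t}t^n}{n!}S_\alpha(t)x\,dt$; here the coefficient-wise passage of the closed operator $A$ through the power series is justified because, writing $F(s)x:=\int_0^\infty e^{-(1-s)t}S_\alpha(t)x\,dt$, the map $s\mapsto AF(s)x=F(s)x-G(s)x$ is analytic on $[0,s_0)$, so closedness of $A$ together with Cauchy's formula for the Taylor coefficients lets $A$ act termwise. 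Expanding the second expression instead, and using $(1-s)^{-\alpha}=\sum_n k^\alpha(n)s^n$ together with $k^\alpha(n)=k^{\alpha+1}(n)-k^{\alpha+1}(n-1)$ (which follows from $k^1\ast k^\alpha=k^{\alpha+1}$ and $k^1\equiv1$), the coefficient of $s^n$ for $n\ge1$ reads $\Delta^{-\alpha}\mathcal{R}(n)x=\int_0^\infty \frac{e^{-t}t^{n-1}}{(n-1)!}S_\alpha(t)x\,dt+(k^{\alpha+1}(n)-k^{\alpha+1}(n-1))x$, which is the second identity. (Equivalently, one passes between the two forms by integration by parts, using the integral equation \eqref{integral}.)

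Finally, for the temperated case $\Vert S_\alpha(t)\Vert\le Ct^\alpha$ I would estimate directly from the second identity. The integral term is bounded by $C\int_0^\infty \frac{e^{-t}t^{\,n-1+\alpha}}{(n-1)!}\,dt=C\,\Gamma(\alpha+1)k^{\alpha+1}(n-1)$, while $0\le k^{\alpha+1}(n)-k^{\alpha+1}(n-1)\le k^{\alpha+1}(n)$ because $k^{\alpha+1}$ is nondecreasing for $\alpha\ge0$. Since $k^{\alpha+1}(n-1)\le k^{\alpha+1}(n)$, this yields $\Vert\Delta^{-\alpha}\mathcal{R}(n)\Vert\le (C\Gamma(\alpha+1)+1)k^{\alpha+1}(n)$ for all $n\in\N_0$, that is, $R=(I-A)^{-1}$ is $(C,\alpha)$-bounded. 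The main obstacle here is not the algebra but the analytic bookkeeping around the unbounded closed operator $A$: one must justify interchanging $A$ with both the integral $\int_0^\infty$ and the power series in $s$, which I would handle through the closedness of $A$ and the analyticity of $s\mapsto F(s)x$ on $[0,s_0)$.
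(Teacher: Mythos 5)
Your proposal is correct, and it reaches both identities by a route that is related to but packaged differently from the paper's. The paper computes $\frac{(-1)^n}{n!}\frac{d^n}{d\lambda^n}\bigl(\lambda^{-\alpha}(\lambda-A)^{-1}\bigr)$ in two ways (Leibniz rule on the product, and differentiation under the Laplace integral \eqref{resolvent}) and then sets $\lambda=1$; your expansion of $G(s)=(1-s)^{-\alpha}(I-sR)^{-1}$ in powers of $s=1-\lambda$ is exactly the summed (generating-function) form of that same Taylor-coefficient extraction at $\lambda=1$, so the first identity is obtained in essentially the same way, with your closedness-plus-Cauchy-formula argument playing the role of the paper's implicit interchange of $A$ with the integral. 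The genuine divergence is in the second identity: the paper derives it from the first by an integration-by-parts manipulation that invokes the integral equation \eqref{integral} of the integrated semigroup, whereas you obtain it purely from the resolvent algebra identity $(1-A)(\lambda-A)^{-1}=I+(1-\lambda)(\lambda-A)^{-1}$ together with the telescoping relation $k^{\alpha}(n)=k^{\alpha+1}(n)-k^{\alpha+1}(n-1)$; this bypasses \eqref{integral} entirely and, since it only compares Taylor coefficients of two scalar-plus-integral expressions, requires no handling of the unbounded operator $A$ at all in that step. Your route is thus somewhat more self-contained (only \eqref{resolvent}, \eqref{eq2.1} and closedness of $A$ are used), and your temperated-growth estimate makes the constant explicit, $\Vert\Delta^{-\alpha}\mathcal{R}(n)\Vert\le (C\Gamma(\alpha+1)+1)k^{\alpha+1}(n)$, where the paper just asserts boundedness; the paper's route, in exchange, stays closer to the structure theory of integrated semigroups. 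One small repair: for the Cauchy-formula argument you should state analyticity of $s\mapsto F(s)x$ and $s\mapsto G(s)x$ on a complex disc $\{|s|<s_0\}$ rather than on the real interval $[0,s_0)$; this is immediate since both power series converge there and $\Re(1-s)>\omega$ for $|s|<s_0\le 1-\omega$, but it is what actually licenses extracting coefficients by contour integration and commuting the closed operator $A$ with that integral.
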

\begin{proof} Take $\lambda$ such that $\lambda\in\rho(A) $ and then
$$
\frac{(-1)^n}{n!}\frac{d^n}{d\lambda^n}(\lambda^{-\alpha}(\lambda-A)^{-1})= \sum_{j=0}^n{k^\alpha(n-j)\over \lambda^{\alpha+n-j}}(\lambda-A)^{-j-1}.
$$
In other hand, for $\lambda $ such that $\Re \lambda >\omega$, we apply formula (\ref{resolvent}) to get that
$$
\frac{(-1)^n}{n!}\frac{d^n}{d\lambda^n}(\lambda^{-\alpha}(\lambda-A)^{-1})x= \int_0^{\infty} {t^n\over n!}e^{-\lambda t}S_{\alpha}(t)x\,dt, \qquad x\in X.
$$
Finally we take $\lambda=1$ and write $R:=(1-A)^{-1}$,  ${\mathcal R}(n)=R^n$ to conclude the first equality for $n\in \N_0$.  Now for $n\ge 1$, we have that
\begin{eqnarray*}
\Delta^{-\alpha}{\mathcal R}(n)x&=& \int_0^\infty {e^{-t}t^{n}\over n!}S_\alpha(t)xdt+A\int_0^\infty {e^{-t}t^{n-1}\over (n-1)!}\left(1-{t\over n}\right)\int_0^tS_\alpha(s)xdsdt\\
&=&\int_0^\infty {e^{-t}t^{n-1}\over (n-1)!}S_\alpha(t)xdt+k^{\alpha+1}(n)x-k^{\alpha+1}(n-1)x, \qquad x\in X,
\end{eqnarray*}
where we have apply the equality (\ref{integral}).

In the case that  $\Vert S_\alpha(t)\Vert \le C t^\alpha$, we use the second equality and that the sequence $k^{\alpha+1}$ is increasing to conclude that
$\displaystyle{\sup_{n\in \N_0}}{\lVert \Delta^{-\alpha}{\mathcal R}(n)\rVert \over k^{\alpha+1}(n)}<\infty$ and $(I-A)^{-1}$ is a $(C, \alpha)$-bounded operator.
\end{proof}

Classical examples of   generators of  temperated  $\alpha$-times integrated semigroup are differential operators $A$ such that their symbol $\hat{A}$ is of the form $\hat{A}=ia$ where $a$ is a real elliptic homogeneous polynomial on $\R^n$ or $a\in C^{\infty}(\R^n\setminus\{0\})$ is a real homogeneous function on $\R^n$ such that if $a(t)=0$ then $t=0,$ see \cite[Theorem 4.2]{Hieber}, and other different examples in \cite[Section 6]{Hieber}.

\begin{remark}{\rm  In the case of uniformly bounded $C_0$-semigroups, i.e. $\{T(t)\}_{t\ge 0}\subset {\mathcal B}(X)$ such that $\sup_{t>0}\Vert T(t)\Vert <\infty$, the resolvent $(1-A)^{-1}$ is power-bounded due to
$$
(1-A)^{-n}x=\int_0^\infty {t^{n-1}\over (n-1)!} e^{-t}T(t)xdt, \qquad x\in X.
$$
Note that Theorem \ref{resolventinte} includes a natural extension of this fact: the resolvent $(1-A)^{-1}$ is a $(C, \alpha)$-bounded operator when $A$ generates a   temperated  $\alpha$-times integrated semigroup.

We may also consider the homomorphism $\theta$ defined in Theorem \ref{homomorphism}, and in this case
$$
\theta(\Delta f)x = -A\theta (f)x - (I-A)f(0)x, \qquad f\in\tau^{\alpha}(k^{\alpha+1}), \quad x\in D(A),
$$ when $A$ generates a  temperated  $\alpha$-times integrated semigroup.}
\end{remark}

\subsection{Counterexamples of bounded homomorphisms}

\begin{example}{\rm In \cite[Section 2]{DL} there is an example of a positive, Ces\`{a}ro bounded  but not
power bounded operator $T$ on the space $\ell^1$. As the author comments in \cite[Section 4. Examples]{De00}, $\Vert  T^n\Vert_1\le K{n/ \ln (n)}$  where $K$ is the uniform bound of the Ces\`{a}ro averages of $T$. In this example $T$ is also a contraction in $\ell^\infty$. In \cite[Section (VI)]{E2}, it is proven that $\sup_{n\ge 0}\Vert T^n\Vert_p\ge (2^k)^{1\over p}$ for any $k\ge 1$ and $1\le p< \infty$. We conclude that $T$ is not power bounded in $\ell^p$ $(1 \le p < \infty)$ and $T$ is a Ces\`{a}ro bounded in $\ell^p$ $(1 \le p \le \infty)$ . By Corollary \ref{cor5.7}, there exists a bounded homomorphism $\theta: \tau^1(k^2)\to {\mathcal B}(\ell^p)$ such that $\theta(e_1)=T$ and extends to $\theta: \ell^1\to {\mathcal B}(\ell^p)$ if and only $p=\infty$. }
\end{example}

\begin{example}{\rm In \cite{To-Ze}, a simple
matrix construction,  which  unifies  different approaches to the
Ritt condition and  ergodicity of matrix semigroups, is studied in detail. Consider the Banach space ${\frak X}:= X\oplus X$ with norm
$$
\Vert x_1\oplus x_2\Vert_{X\oplus X}:= \sqrt{\Vert x_1\Vert^2+\Vert x_2\Vert^2}, \qquad x_1\oplus x_2\in {\frak X}.
$$
 Let the bounded linear operator ${\frak T}$ on ${\frak X}$ be defined by the operator matrix
$$
{\frak T}:=\left(\begin{matrix}T &T-I\\ 0&T \end{matrix}\right)
$$
where $T\in {\mathcal B}(X)$. In \cite[Lemma 2.1]{To-Ze}, some connected properties between $T$ and ${\frak T}$ are given. Now we consider as $X= \ell^2$ and the backward shift operator $T\in {\mathcal L}(\ell^2)$ defined by
$$T((x_n)_{n\ge 0}):=(x_{n+1})_{n\ge 0}, \qquad (x_n)_{n\ge 0}\in \ell^2. $$
By \cite[Example 3.1]{To-Ze}, $\Vert {\frak T}^n\Vert\ge 2n$ and $  {\frak T}
 $ is a $(C, 1)$-bounded operator. We apply Corollary \ref{cor5.7} to conclude that there exists an algebra homomorphisms $\theta: \tau^1(k^2)\to {\mathcal B}({\frak X})$ such $\theta(e_1)={\frak T}$ and it is not extended continuously to $\ell^1$.  In \cite[Remark 3.2]{To-Ze}, the growth    $\Vert {\frak T}^n\Vert\ge 2n$ is pointed at as the fastest possible for a Ces\`{a}ro bounded operator. }
\end{example}

\begin{example}{\rm In \cite[Proposition 4.3]{LSS}, the following example is given.
For any $\gamma$ with $0<\gamma<1$, there exists a positive linear operator $T$ on an $L_1$-space such that
$$\sup_{n\ge0}\Vert\frac{\Delta^{-\gamma}\mathcal{T}(n)}{k^{\gamma+1}(n)}\Vert=\infty, \quad\textrm{but}\quad \sup_{n\ge0}\Vert\frac{\Delta^{-\beta}\mathcal{T}(n)}{k^{\beta+1}(n)}\Vert<\infty \quad\textrm{for all }\beta>\gamma.$$
By  Corollary \ref{cor5.7},  we conclude that there exists a bounded algebra homomorphism $\theta$ such that $\theta:\tau^{\beta}(k^{\beta+1})\to \mathcal{B}(X)$ for all $\beta >\gamma$, $\theta(e_1)=T$, and the homomorphism $\theta$ is not extended continuously to the algebra $\tau^{\gamma}(k^{\gamma+1})$ with $0<\gamma<1$.}
\end{example}

\begin{example}{\rm In \cite[Proposition 4.4 (i)]{LSS}, the following operator is constructed.
Let $dim X=\infty$. For any integer $j\ge0$, there exists a bounded linear operator $T$ on $X$ such that
$$\sup_{n\ge0}\Vert\frac{\Delta^{-(j+1)}\mathcal{T}(n)}{k^{j+2}(n)}\Vert<\infty,  \quad\textrm{but}\quad \sup_{n\ge0}\Vert\frac{\Delta^{-\gamma}\mathcal{T}(n)}{k^{\gamma+1}(n)}\Vert=\infty \quad\textrm{for }0\le\gamma<j+1.$$
By  Corollary \ref{cor5.7},  we conclude that there exists a bounded algebra homomorphism $\theta$ such that $\theta:\tau^{j+1}(k^{j+2})\to \mathcal{B}(X)$, $\theta(e_1)=T$, and the homomorphism $\theta$ is not continuously extended to the algebra $\tau^{\gamma}(k^{\gamma+1})$ with $0\le\gamma<j+1$.}
\end{example}

 \begin{example}{\rm In \cite[Proposition 4.4 (ii)]{LSS}, the following operator is constructed.
Let $dim X=\infty$. There exists a bounded linear operator $T$ on $X$ with $r(T)=1$, $\Vert T\Vert=2,$  and
$$\Vert A_r(T)\Vert\le 1-r,  \quad0<r<1;\quad\textrm{and}\quad \sup_{n\ge0}\Vert\frac{\Delta^{-j}\mathcal{T}(n)}{k^{j+1}(n)}\Vert=\infty, \quad\textrm{for }j\ge 1.$$
Since $k^j(n)\le k^{j+1}(n)$ for $n\ge 0$, we also conclude that $\displaystyle{\Vert\frac{\Delta^{-j}\mathcal{T}(n)}{k^{j}(n)}\Vert=\infty}$ for $j\ge 1$ and the converse of Theorem \ref{abels} does not hold for $\gamma <\alpha$.}
\end{example}


\subsection{Application to Katznelson-Tzafriri theorem}


Let $A(\mathbb{T})$ be the regular convolution Wiener algebra formed by all continuous periodic functions $f(t)=\sum_{n=-\infty}^{\infty}\hat{f}(n) e^{int}, \quad t\in [-\pi,\pi],$ where $(\hat{f}(n))_{n\in\Z}$ are the Fourier coefficients of $f,$ that is $$\hat{f}(n)=\frac{1}{2\pi}\int_{-\pi}^{\pi}f(t)e^{-int}\,dt,\qquad n\in \Z,$$ with the norm $\lVert f \rVert_{A(\mathbb{T})}:=\sum_{n=-\infty}^{\infty}|\hat{f}(n)|,$ and $A_+(\mathbb{T})$ be the convolution closed subalgebra of $A(\mathbb{T})$ where the functions satisfies that $\hat{f}(n)=0$ for $n<0.$ Note that both $A(\mathbb{T})$ and $\ell^1_{\Z},$ and $A_+(\mathbb{T})$ and $\ell^1$ are isometrically isomorphic, where $\ell^1_{\Z}$ denotes the complex summable sequences indexed in $\Z.$

Katznelson and Tzafriri proved in 1986 the following well known  theorem: if $T\in\mathcal{B}(X)$ is power-bounded and $f\in A_+(\mathbb{T})$ is of spectral synthesis in $A(\mathbb{T})$ with respect to $\sigma(T)\cap \mathbb{T},$ then $$\displaystyle\lim_{n\to\infty}\lVert T^n\theta(\hat{f}) \rVert=0,$$ see \cite[Theorem 5]{Katznelson}. Moreover, for $T\in\mathcal{B}(X)$ a power-bounded operator, $\displaystyle\lim_{n\to\infty}\lVert T^n-T^{n+1}\rVert=0$ if and only if $\sigma(T)\cap \mathbb{T}\subseteq\{1\},$ see \cite[Theorem 1]{Katznelson}.

The authors have got some similar results for $(C,\alpha)$-bounded operators, which will appear in a forthcoming paper. We define $A^{\alpha}(\mathbb{T})$ a new regular Wiener algebra contained in $A(\mathbb{T}),$ and $A_+^{\alpha}(\mathbb{T})$ a convolution closed subalgebra of $A^{\alpha}(\mathbb{T}),$ which is isometrically isomorphic to $\tau^{\alpha}(k^{\alpha+1}).$ The result prove that if $\alpha> 0,$ $T\in\mathcal{B}(X)$ is a $(C,\alpha)$-bounded operator and $f\in A_+^{\alpha}(\mathbb{T})$ is of spectral synthesis in $A^{\alpha}(\mathbb{T})$ with respect to $\sigma(T)\cap \mathbb{T},$ then $$\displaystyle\lim_{n\to\infty}\frac{1}{k^{\alpha+1}(n)}\lVert \Delta^{-\alpha} \mathcal{T}(n)\theta(\hat{f}) \rVert=0.$$

On the continuous case, Katznelson-Tzafriri theorems have been proved for $C_0$-semigroups and extended later for $\alpha$-times integrated semigroups, see \cite{Esterle} and \cite{GMM} respectively.


\subsection*{\it Acknowledgments.} This work was done while the second author was on  sabbatical leave, visiting  the University of Zaragoza. He is grateful to the members of the Analysis Group for their kind hospitality.


\begin{thebibliography}{999}

\bibitem{AbdeljaDual} T. Abdeljawad. {\it Dual identities in fractional difference
calculus within Riemann.} Adv. in Diff. Equat., vol.2013,
article 36, 2013.


\bibitem{Abdelja2} T. Abdeljawad and F. M. Atici.  {\it  On the definitions of nabla fractional operators.}. Abstr. Appl. Anal. 2012, Article ID 406757
(2012). Doi:10.1155/2012/406757


\bibitem{ABHN} W. Arendt, C. J. K. Batty, M. Hieber, F. Neubrander. { \it Vector-valued Laplace transforms and Cauchy
problems,} Second edition, Monographs in Mathematics. {\bf 96},
Birkh\"auser (2011).

\bibitem{AtEl09} F. M. Atici and P. W. Eloe. {\it Initial value problems in discrete fractional calculus.} Proc. Amer. Math. Soc.,
137 (3), (2009), 981-989.

\bibitem{AtSe10} F.M. Atici and S. Seng\"ul. {\it Modeling with fractional difference equations.} J. Math. Anal. Appl., 369 (2010), 1-9.

\bibitem{Lizama1} S. Calzadillas, C. Lizama and J. G. Mesquita. {\it A unified approach to discrete fractional calculus and applications.} Preprint, 2014.

\bibitem{Ch-Mu93} S. Chanillo and B. Muckenhoupt. {\it Weak type estimates for Ces\`{a}ro sums of Jacobi polinomial series.} Mem. Amer. Math. Soc. 102, (1993), vol. 487.


\bibitem{cho} W. Chojnacki. {\it A generalization of the Widder-Arendt theorem.} Proc. of the Edingburgh. Math. Soc., 45 (2002), 161-179.



\bibitem{DL} Y. Derriennic and M. Lin, {\it On invariant measures and ergodic theorems for
positive operators. } J. Func. Anal., 13 (1973), 252--267.

\bibitem{De00} Y. Derriennic. {\it On the mean ergodic theorem for Ces\`{a}ro bounded operators.} Colloq. Math. 84/85 (2000), 443-455.

\bibitem{Ed04} E. Ed-Dari. {\it On the $(C,\alpha)$ Ces\`{a}ro bounded operators.} Studia Mathematica 161 (2) (2004), 163-175.

\bibitem{Elaydi} S. Elaydi. {\it An Introduction to Difference Equations.} Undergraduate Texts in Mathematics. Springer. 3rd. Edition, 2005.

\bibitem{E2} R. Emilion, {\it Mean-Bounded operators and
mean ergodic theorems.} J.  Func. Anal., 61 (1985), 1--14.

\bibitem{Esterle} J. Esterle, E. Strouse and F. Zouakia. {\it Stabilit\'e asymptotique de certains semi-groupes d'op\'erateurs et ideaux primaires de $L^1(\R^+)$.} J. Operator Theory, 28 (1992), 203-227.

\bibitem{GM} J. E. Gal\'e and P.J. Miana. {\it One-parameter groups of regular quasimultipliers.} J. Funct. Anal., 237 (2006), 1--53.

\bibitem{GMM} J. E. Gal\'e, M. M. Mart\'inez and P.J. Miana. {\it Katznelson-Tzafriri type theorem for integrated semigroups.} J. Operator Theory, 69 (1) (2013), 59-85.

\bibitem{Gale} J. E. Gal\'e and A. Wawrzy\'nczyk. {\it Standard ideals in weighted algebras of Korenblyum and Wiener types.} Math. Scand., 108 (2011), 291--319.

 \bibitem{Gautschi} W. Gautschi. {\it  Some elementary inequalities relating to the gamma and incomplete gamma function.} J. Math. Phys. 38 (1) (1959), 77--81.


\bibitem{Go12} C.S. Goodrich. {\it On a first-order semipositone discrete fractional boundary value problem.} Arch. Math. 99
(2012), 509-518.

\bibitem{Gr31}  T.H. Gronwall. {\it On the Ces\`{a}ro sums of Fourier's and Laplace's series.} Ann. of Math. 32 (1) (1931),  53--59.

\bibitem{Hieber} M. Hieber. {\it Integrated semigroups and differential operators on $L^p$ spaces.} Math. Ann. 291 (1991), 1-16.


\bibitem{Katznelson} Y. Katznelson and L. Tzafriri. {\it On power bounded operator.} J. Func. Anal., 68 (1986), 313--328.

\bibitem{Ke-Li-Mi} V. Keyantuo, C. Lizama and P.J. Miana. {\it Algebra homomorphisms defined via convoluted
semigroups and cosine functions.} J.  Func. Anal., 257 (2009), 3454--3487.


\bibitem{Ku39} B. Kuttner. {\it Some theorems on Riesz and Ces\`aro sums.} Proc. London. Math. Soc. 45, (1939), 398--409.

\bibitem{LSS} Y.-C. Li, R. Sato and S.-Y. Shaw. {\it Boundednes and growth orders of means of discrete and continuous semigroups of operators.} Studia Mathematica 187 (1) (2008), 1-35.







\bibitem{Su-Ze13} L. Suciu and J. Zem\'anek. {\it Growth conditions on Ces\`aro means of higher order.} Acta Sci. Math. (Szeged) 79 (2013), 545--581.

\bibitem{To-Ze} Y. Tomilov and J. Zem\'anek. {\it A new way of constructing examples in operator ergodic theory.} Math. Proc. Camb. Philos. Soc. 137 (2004), 209--225.

\bibitem{Yo98} T. Yoshimoto. {\it Uniform and strong ergodic theorems in Banach spaces.} Illinois J. Math., 42 (1998), 525-543; Correction, ibid. 43 (1999), 800-801.

\bibitem{Zygmund} A. Zygmund. {\it Trigonometric Series.} 2nd ed. Vols. I, II, Cambridge University
Press, New York, 1959.

\end{thebibliography}
\end{document}